\documentclass{amsart}
 \usepackage{amsmath}
\usepackage{amssymb}
\usepackage{amsfonts}
      \makeatletter
     \def\section{\@startsection{section}{1}%
     \z@{.7\linespacing\@plus\linespacing}{.5\linespacing}%
     {\bfseries
     \centering
     }}
     \def\@secnumfont{\bfseries}
     \makeatother
\setlength{\oddsidemargin}{0.5in}
 \setlength{\topmargin}{-0.3in}
  \setlength{\textwidth}{5.2in}
\setlength{\textheight}{8.9in}
       \usepackage{a4wide}



   \newtheorem{theorem}{Theorem}[section]
\newtheorem{lemma}[theorem]{Lemma}
\newtheorem{corollary}[theorem]{Corollary}
\newtheorem{proposition}[theorem]{Proposition}

\theoremstyle{definition}

\newtheorem{remark}[theorem]{Remark}
\newtheorem{remarks}[theorem]{Remarks}

\numberwithin{equation}{section}

\def \a{{\alpha}}
\def \b{{\beta}}
\def \D{{\Delta}}

\def \d{{\delta}}
\def \e{{\varepsilon}}

\def \g{{\gamma}}

\def \o{{\omega}}
\def \O{{\Omega}}

\newcommand{\dist}{\stackrel{\mathcal{D}}{=}}

\def \A{{\mathcal A}}

\def \E{{\bf E}\, }

\def \N{{\bf N}}

\def \P{{\bf P}}

\def \qq{{\qquad}}
\def \R{{\bf R}}

\def \Z{{\bf Z}}

\def \dd{{\rm d}}

\def \noi{{\noindent}}

  
\def\E{{\mathbb E \,}}

\def\P{{\mathbb P}}

\def\R{{\mathbb R}}

\def\Z{{\mathbb Z}}

\def\N{{\mathbb N}}

 \scrollmode
 
   \font\sevenrm= cmr10 at 7 pt
     \scrollmode
  
 
\def\ddate {\sevenrm \ifcase\month\or January\or
February\or March\or April\or May\or June\or July\or
August\or September\or October\or November\or December\fi\! {\the\day}, \!{\sevenrm\the\year}}

 

\scrollmode

\title[$\boldsymbol{\Phi}$-variation of stochastic processes]{On the 
 $\boldsymbol{\Phi}$-variation of stochastic processes with exponential moments}
   \begin{document}
\author{Andreas Basse-O'Connor}
  \author{Michel   WEBER}
  \address{Department of Matematics,  Aarhus University, Ny Munkegade 116, DK-8000 Aarhus C,   Denmark.
 E-mail:    {\tt  basse@math.au.dk}}
 \address{IRMA, Universit\'e
Louis-Pasteur et C.N.R.S.,   7  rue Ren\'e Descartes, 67084
Strasbourg Cedex, France.
   E-mail:    {\tt  michel.weber@math.unistra.fr}}
\footnote{\emph{Key words and phrases}:  $\Phi$-variation,  Gaussian processes,  Hermite processes,  metric entropy methods.  
\par AMS 2010 subject classifications: Primary 60G17, 60G15; secondary 60G18, 60G22. 
\par 
\ddate{}}

\begin{abstract} We obtain  sharp sufficient conditions for  exponentially integrable stochastic processes 
$X=\{X(t)\!\!: t\in [0,1]\}$,  
   to have   sample paths with bounded $\Phi$-variation.  When $X$ is moreover Gaussian, we also provide  a  bound   of  the expectation  of the associated $\Phi$-variation norm of $X$. For an  Hermite process $X$  of order  $m\in \N$ and  of Hurst index $H\in (1/2,1)$, we show that $X$ is of bounded $\Phi$-variation where $\Phi(x)=x^{1/H}(\log(\log 1/x))^{-m/(2H)}$, and  that this $\Phi$ is optimal. This shows  that in terms of $\Phi$-variation, the Rosenblatt process (corresponding to $m=2$) has  more rough sample paths than the fractional Brownian motion (corresponding to $m=1$). 
   
   \end{abstract}

\maketitle
 \section{Introduction and Main Results.}\label{sec1}

Let $\Phi\!\!: \R_+\to \R_+$ be a  strictly increasing  continuous   function such that $\Phi(0)=0$ and $\lim_{t\to \infty} \Phi(t)=\infty$.
  The $\Phi$-variation of a      function
$f:[0,1] \to \R$    is defined according to Young~\cite{Y} by 
\begin{equation*}\label{phivar}
 \mathcal   V_\Phi(f):= \sup_{0=t_0<\dots<t_n=1 \atop   n\in \N}\sum_{i=1}^{n}
 \Phi\big(|f(t_{i})-f(t_{i-1})|\big) .
 \end{equation*}
The particular case $\Phi(t)= t^p$ is  classical and corresponds for $p=1$ to the concept of bounded variation introduced by Jordan~\cite{J}, and for $p>1$ to the one of bounded $p$-variation introduced by Wiener \cite{Wi}. The $\Phi$-variation is closely related to convergence of Fourier series (see \cite[Chapter~11]{Dud-Nor}), 
Hausdorff dimension (see e.g.\ \cite[Theorem~8.4]{Bl-Ge}), rough paths theory (see  \cite[Definition~9.15]{Friz}) and  integration theory (see \cite[Chapter~3]{Dud-Nor}). 
 If  $d$ is a pseudo-metric on  $[0,1]$ ($d$ has all properties of a metric except for the implication $d(s,t)=0\Rightarrow s=t$), we will  write
\begin{equation*}\label{phivar-1}
 \mathcal   V(\Phi,d):= \sup_{0=t_0<\dots<t_n=1 \atop   n\in \N}\sum_{i=1}^{n}
 \Phi\big(d(t_i,t_{i-1})\big) .
 \end{equation*}
Furthermore, set $\log^*(x)=\log(1+x)$ and  $\log^*_2(x)=\log^*(\log^*x)$ for all $x\geq 0$. In several cases     we only define a given function $\Phi\!:\R_+\to\R$ explicitly on $(0,\infty)$, and in this case we always set $\Phi(0)=0$.

A classical result by P.\ L\'evy states that the sample paths of a Brownian motion are of bounded $p$-variation if and only if $p>2$. This result has been improved by 
Taylor~\cite[Theorem~1 and its Corollary]{Taylor}  who derived  that  the optimal  $\Phi$-variation function of the Brownian motion is $\Phi(t)=t^2/\log_2^*(1/t)$. Taylor's result has been extended to the fractional Brownian motion by Dudley and Norvai{\v{s}}a~\cite{Dud-Nor} and to   other Gaussian processes with stationary increments by  
Kawada and K\^ono~\cite{Ka-Ko} and Marcus and Rosen~\cite{Mar-Ros, Mar-Ros-pa-1}. Furthermore, the ability to  solve rough differential equations for the Brownian motion under minimal regularity assumptions relies on the above mentionned  characterization of the Brownian sample paths of Taylor, see Theorems~10.41 and 13.15 in \cite{Friz}.
 \vskip 1 pt Let us recall and briefly discuss a quite general result by Jain and Monrad  which 
has also motivated this work,  and which is the  key 
ingredient  in the recent  work Friz et al.~\cite{Friz-JM} on rough path analysis of Gaussian processes.  Assume that $X=\{X(t)\!: t\in [0,1]\}$ is a  centered  Gaussian process and let 
$d(s,t)=\|X(s) -X(t)\|_{L^2}$ for all $s,t\in [0,1]$. Let $1<p<\infty$ (the case $p=1$ being trivial).  Jain and Monrad~\cite[Theorem~3.2]{JaMo} showed that if
\begin{equation}\label{eq:JaMo}
\mathcal V(\Psi,d)<\infty\qquad \text{where}\qquad \Psi(t)=t^p(\log_2^*(1/t))^{p/2}, \quad t>0
\end{equation}
then $X$ has sample paths of bounded $p$-variation almost surely.
A closer examination of their proof shows that \eqref{eq:JaMo}
      implies the Lipschitz property
\begin{equation}\label{eq23}
| X(t,\omega)-X(s,\omega)| \le  C(\omega)  d(s,t)\Big(\log^* \frac{1}{d(s,t )} \Big)^{1/2}  \qquad \forall 0\le s,t\le 1
\end{equation}
with probability one, which in turn  is shown under the  weaker   and also  necessary  
assumption   
\begin{equation}\label{eq:discussJaMo}
\mathcal V(\tilde \Psi,d)<\infty\qquad \text{where} \qquad \tilde \Psi(t)=t^p,\quad t\geq 0. 
\end{equation}
 As  (\ref{eq23}) immediately  implies that $X $  has almost all sample paths of bounded $\Phi$-variation, where 
$\Phi(t) = t^p (\log^*(1/t))^{-p/2}$, $t>0$,  the double
logarithm term in  $\Psi$  indicates which strengthening of the assumption 
\eqref{eq:discussJaMo} is necessary to get the finer property of having paths of bounded $p$-variation. 
   It also follows (as remarked,  \eqref{eq:discussJaMo} implies \eqref{eq23}, see notably Remark \ref{entropy}) that the latter property holds only if $X$ has the Lipschitzian property \eqref{eq23}. For $d$    continuous with respect to the usual metric on $[0,1]$, the fact that $X$ be of bounded $p$-variation  almost surely, thus implies that $X$ is   continuous almost surely for the usual metric, which is {\it in contrast} with the fact that a function  $f$  with bounded $p$-variation is   
not necessarily  continuous (although both limits
$\lim_{y\to x_-}    f(y)$, $\lim_{y\to x_+}  
f(y)$   exist).  
Recall in addition that $f$ is     continuous almost everywhere  in the sense of the Lebesgue measure, see Bruneau \cite{Br2} for  these facts; and that by    Jordan's   example  \cite{J},    a function with finite variation  may have positive jumps on each rational.

 \vskip 4 pt We  consider  exponentially integrable processes and study the $\Phi$-variation  of their sample paths. We obtain   general sufficient conditions for the sample paths to be of
bounded
$\Phi$-variation.  Our conditions are sharp. When applied to Gaussian processes, we recover and complete  Jain and Monrad   sufficient condition
for bounded $p$-variation, but also extend it to general  $\Phi$-variation spaces. 

\vskip 2 pt Introduce for every $0<\a<\infty$,  the functions
$\phi_\a(x)=e^{|x|^\alpha}-1$, $x\in \R$.
  We consider processes $X$ satisfying the following increment condition, in which $d$ is a given pseudo-metric on $[0,1]$: 
 \begin{equation}\label{eq:673}
\hbox{For some    $0<\a<\infty$,} \qq \E\phi_\a\Big(\Big| \frac{X(t)-X(s)}{ d(t,s)}\Big|\Big)\leq 2 \qquad \text{for all }t,s\in [0,1].
\end{equation}
The cases  $1\le \a<\infty$ and $0<\a<1$ are of different nature since in the second case the functions $\phi_\a$ are no longer convex. Accordingly,  define for a  random variable $U$
 \begin{equation*}
 \|U\|_{\phi_\a}=\inf\big\{\Delta>0: \E \phi_\a\Big(\frac{U}{\Delta}\Big)\leq 1\big\} 
 \end{equation*}
and  write $U\in L^{\phi_\a}$, if  $\|U\|_{\phi_\a}<\infty$. 
For  $\a \geq 1$, $\phi_\a$  is an Orlicz function and the space $(L^{\phi_\a},\|\cdot\|_{\phi_\a})$ is an Orlicz space and hence a Banach space.  
However, for $0<\a<1$, $(L^{\phi_\a},\|\cdot\|_{\phi_\a})$ is only  a quasi-Banach space, i.e.\ satisfies all the axioms of a Banach space except that the triangle inequality is replaced by: there exists a constant $K_\a>0$ such that 
\begin{equation*}\label{def-K-a}
\|U+Y\|_{\phi_\a}\leq K_\a\big(\|U\|_{\phi_\a}+\|Y\|_{\phi_\a}\big)\qquad \text{for all }U,Y\in L^{\phi_\a}
\end{equation*} 
(we may  choose $K_\a=2^{1/\a}$ in our case). We refer to  \cite{Rao-Ren} for the theory of Orlicz spaces. The quasi-Banach space property of $(L^{\phi_\alpha}, \|\cdot\|_{\phi_\a})$  for $0<\a<1$ follows by \cite[Section~2.1]{MaOr}. Furthermore, we refer to  Talagrand~\cite{Talagrand} for sharp results on sample boundedness 
of  stochastic processes satisfying \eqref{eq:673}.

\smallskip 
The following theorem provides sufficient conditions for a class of stochastic  processes with finite exponential moments to have sample paths of bounded $\Phi$-variation.

\begin{theorem}\label{thm-alpha}
Let $X=\{X(t)\!: t\in [0,1]\}$ be a separable stochastic process satisfying the increment condition \eqref{eq:673} for some $\alpha>0$.
 Suppose that there exists   a continuous, strictly increasing  function $\Psi: \R_+\to \R_+$ with
$\mathcal V(\Psi,d)<\infty$.
  Suppose moreover that there exists $C>0$ such that for   all $M>0$,
 \begin{align}\label{con-1-thm1-alpha}
 {}&   \sum_{m=0}^\infty   2^{ m}   \Phi( M y_m)e^{- x_m^\alpha} <\infty   \qquad 
    \   where \\ \label{con-1-thm1-alpha-2}
  {}&  x_m =\frac{\Phi^{-1}( C 2^{-m})}{
K_\a\int_0^{2^{-m}} (\log^*(\frac{ 2^{-m}}{\e}))^{1/\alpha}  \, \Psi^{-1}(\dd \e)},  
\qquad \quad y_m =\int_{0}^{2^{-m}} \Big(\log^*\Big(\frac{1}{\e}\Big)\Big)^{1/\alpha}\, \Psi^{-1}(\dd \e),
 \end{align} 
 and $K_\a$ is universal constant  only depending on $\a$. Then $X$ has sample paths of bounded $\Phi$-variation almost surely. 
  \end{theorem}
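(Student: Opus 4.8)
The plan is to establish bounded $\Phi$-variation by a chaining-type argument over dyadic partitions, controlling increments across scales using the exponential moment condition \eqref{eq:673} together with the finiteness of $\mathcal V(\Psi,d)$. First I would fix a sample point $\omega$ and, for each level $m\geq 0$, consider the dyadic net $D_m=\{k2^{-m}: 0\le k\le 2^m\}$ and the successive-point increments $|X(t_i)-X(t_{i-1})|$ of an arbitrary partition. The standard reduction is to bound $\mathcal V_\Phi$ by a sum over dyadic levels of the form $\sum_m 2^m \Phi(\text{max increment at level }m)$, so the key is to produce a high-probability upper bound for the maximal increment over points at distance roughly $2^{-m}$ in the pseudo-metric $d$.

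The second step is the tail estimate. Using \eqref{eq:673} and the (quasi-)norm $\|\cdot\|_{\phi_\a}$, I would apply a maximal inequality for processes in $L^{\phi_\a}$: for a finite or separable family of increments, the $\phi_\a$-norm of the supremum is controlled by a Dudley-type entropy integral with respect to $d$. Here is where the quantities in \eqref{con-1-thm1-alpha-2} arise: the entropy integral $\int_0^{2^{-m}}(\log^*(1/\e))^{1/\a}\,\Psi^{-1}(\dd\e)$ measures the metric complexity of the $d$-ball of radius $2^{-m}$, since $\Psi^{-1}$ converts a $\Psi$-variation scale into a metric scale and the $(\log^*)^{1/\a}$ factor is exactly the inverse of $\phi_\a$ governing the number of comparison points. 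This yields $y_m$ as the bound on the typical oscillation at level $m$ and $x_m$ as the associated tail exponent, so that for a threshold $Mx_m$-type deviation one gets a bound of order $e^{-x_m^\a}$ on the relevant probability. Summing the union bound $\sum_m 2^m e^{-x_m^\a}$ over the $2^m$ intervals at level $m$, the convergence hypothesis \eqref{con-1-thm1-alpha} guarantees a summable series.

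The third step is a Borel--Cantelli argument: since $\sum_m 2^m \Phi(My_m)e^{-x_m^\a}<\infty$ for all $M>0$, I would choose the event that the maximal level-$m$ increment exceeds $My_m$ only finitely often, almost surely, for some (random) $M=C(\omega)$. On the complementary full-measure event, every increment at level $m$ is at most $C(\omega)y_m$, whence $\Phi$ applied to each increment is at most $\Phi(C(\omega)y_m)$; summing over the $2^m$ intervals and over $m$ reproduces exactly the convergent series in \eqref{con-1-thm1-alpha} (with $M=C(\omega)$), giving $\mathcal V_\Phi(X(\cdot,\omega))<\infty$. Making the passage from dyadic increments to arbitrary partitions precise requires a telescoping/monotonicity lemma for $\Phi$-variation along nested partitions, of the type used in Taylor's and Jain--Monrad's arguments; I would invoke such a comparison to dominate the $\Phi$-variation over all partitions by the dyadic sum.

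The main obstacle I anticipate is twofold. First, the case $0<\a<1$ is genuinely harder because $\phi_\a$ is not convex and $\|\cdot\|_{\phi_\a}$ is only a quasi-norm; the maximal inequality and the entropy integral must be set up to tolerate the constant $K_\a$ in the quasi-triangle inequality (this is precisely why $K_\a$ appears in the denominator of $x_m$ in \eqref{con-1-thm1-alpha-2}), and one must track how $K_\a$ propagates through the chaining sum without destroying summability. Second, verifying that the entropy integral with respect to $\Psi^{-1}$ correctly captures the metric dimension at scale $2^{-m}$ — in particular bounding the covering numbers of $d$-balls in terms of $\Psi$ — is the technical heart: one needs $\mathcal V(\Psi,d)<\infty$ to give a quantitative packing bound, and the substitution turning a $\Psi$-variation budget into a count of $\e$-separated points is where the definitions of $x_m$ and $y_m$ are forced. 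Once this metric-entropy bookkeeping is in place, the probabilistic and Borel--Cantelli steps are routine.
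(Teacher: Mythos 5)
Your proposal has the right raw ingredients (the entropy integral, the role of $K_\a$ for $0<\a<1$, the need to convert $\mathcal V(\Psi,d)<\infty$ into covering-number bounds), but two of its structural steps fail. First, the decomposition itself: you chain over the time-dyadic nets $D_m=\{k2^{-m}\}$, but the hypothesis $\mathcal V(\Psi,d)<\infty$ gives no control whatsoever on the $d$-diameter of a time-dyadic interval --- $d$ can be extremely non-uniform in $t$, so there is no reason a level-$m$ interval has $d$-diameter comparable to $\Psi^{-1}(2^{-m})$, and the count ``$2^m$ cells of oscillation $\le y_m$'' collapses. The paper's cells are instead the level sets of the variation function $F(t)=\sup\sum_i\Psi(d(t_i,t_{i-1}))$, namely $S_{m,j}=\{s:F(s)\in((j-1)2^{-m},j2^{-m}]\}$; it is this reparametrization by $F$ (not by time) that yields the covering bound $N(S_{m,j},d,\e)\le 2^{-m+2}/\Psi(\e)$ and hence the quantities $y_m$ and $x_m$. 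Relatedly, your final reduction from dyadic to arbitrary partitions via a ``telescoping/monotonicity lemma'' does not exist for general $\Phi$: the main applications here have non-convex, non-subadditive $\Phi$ (e.g.\ $\Phi(t)=t^p(\log_2^*(1/t))^{-p/\a}$), and $\Phi$ of a telescoped increment is not dominated by the sum of $\Phi$'s of its pieces. The paper never reduces to dyadic partitions; it estimates an arbitrary partition directly, sorting its increments by the size of $F(t_i)-F(t_{i-1})$ and using the combinatorial bound $\sharp(E(\omega)\cap\Lambda_m)\le 9Z_{m-2}(\omega)$.

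Second, the probabilistic bookkeeping is quantitatively wrong in both branches. Your Borel--Cantelli step needs the union bound $\sum_m 2^m e^{-x_m^\a}<\infty$, but the hypothesis \eqref{con-1-thm1-alpha} only gives summability of $2^m\Phi(My_m)e^{-x_m^\a}$, and the factor $\Phi(My_m)\to 0$ is essential: in the setting of Theorem~\ref{cor-Int}\eqref{Int-item-1} one has $2^m\Phi(My_m)e^{-x_m^\a}\asymp m^{p/\a}e^{-x_m^\a}$ summable while $2^m e^{-x_m^\a}$ diverges, so ``bad'' increments occur infinitely often and cannot be discarded. Symmetrically, on your ``good'' event you sum $\Phi(C(\omega)y_m)$ over all $2^m$ cells at every level, i.e.\ $\sum_m 2^m\Phi(C(\omega)y_m)$, which diverges in the same example (the convergent series in \eqref{con-1-thm1-alpha} has the extra factor $e^{-x_m^\a}$ that your good-event sum lacks). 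The paper's accounting avoids both problems: good increments are those with $\Phi(|X(t_i)-X(t_{i-1})|)\le C[F(t_i)-F(t_{i-1})]$, so they telescope to at most $CF(1)$ with no entropy input at all; bad increments are not excluded but counted by the random variables $Z_m$, each bad one is bounded by $\Phi(\Theta(\omega)c_dy_m)$ via the global modulus of continuity from Theorem~\ref{t2}, and one shows $\E\big[\sum_m\Phi(Mc_dy_m)Z_{m-2}\big]<\infty$ using $\E Z_m\lesssim F(1)2^me^{-x_m^\a}$ --- an expectation argument on a weighted count, for which \eqref{con-1-thm1-alpha} is exactly calibrated, rather than a Borel--Cantelli exclusion.
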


 Here and in the next theorems, separability is understood with respect to the usual metric on $[0,1]$, see 
 Subsection~\ref{sec-pre}. For $\a=2$,  a process  satisfying \eqref{eq:673} is called sub-Gaussian and this case includes all Gaussian processes if we set $d(s,t)= \|X(s)-X(t)\|_{L^2}$ for all $s,t\in [0,1]$.   On the other hand, processes satisfying \eqref{eq:673} includes many non-Gaussian  processes, for example all processes which are represented by multiple Wiener--It\^o integrals of a fixed order $m\in \N$, and in this case we can take  $\alpha=2/m$ and $d(s,t)= c_m \| X(s)-X(t)\|_{L^2}$ for a suitable constant $c_m>0$.  
Processes satisfying \eqref{eq:673} with  $\a=1$ are  usually called sub-exponential.
From Theorem~\ref{thm-alpha} we derive the following result:

\begin{theorem}\label{cor-Int}
 Suppose that   $X=\{X(t)\!: t\in [0,1]\}$ 
 is a separable stochastic process satisfying  the increment condition \eqref{eq:673} for some $\a>0$.  Then the following \eqref{Int-item-1}--\eqref{Int-item-4} holds:
\begin{enumerate}
\item \label{Int-item-1} Suppose that  $\mathcal V(\Psi,d)<\infty$ for $\Psi(t)=t^p$,  $p>0$, and set  $\Phi(t)=t^{p} (\log^*_2(1/t) )^{-p/\alpha}$. Then  $X$ has sample paths of  bounded $\Phi$-variation almost surely.\smallskip
\item \label{Int-item-2}  Suppose that  $\mathcal V(\Psi,d)<\infty$ for $\Psi(t)=t^{p} (\log^*_2(1/t) )^{-p/\alpha}$ and $p>0$. 
Then $X$ has sample paths of bounded $p$-variation almost surely. \smallskip
\item \label{Int-item-3}     For all   $\b>0$ let $\Phi_{\b}(t)= 
e^{-t^{-1/\b}}$.
Suppose that  $\mathcal V(\Phi_{\b_0},d)<\infty$ for some $\b_0>1/\a$.  Then for all $\b<1-1/\a+\b_0$,  $X$ has  sample paths of bounded $\Phi_{\b}$-variation almost surely. \smallskip
\item \label{Int-item-4}   For all $c,r>0$ let $\Phi_{c,r}(t)=  e^{-r(\log \frac{1}{t})^{c}}$.  Suppose that $\mathcal V(\Phi_{c,r},d)<\infty$. Then for all $v>r$, $X$ has  sample paths of bounded $\Phi_{c,v}$-variation almost surely. 
\end{enumerate}
\end{theorem}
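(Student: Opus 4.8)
The plan is to derive all four items as corollaries of Theorem~\ref{thm-alpha} by choosing the auxiliary function $\Psi$ appropriately in each case and then verifying the summability condition \eqref{con-1-thm1-alpha}. So the main task in each part is a computation: given the specified $\Psi$ and $\Phi$, estimate the two sequences $x_m$ and $y_m$ from \eqref{con-1-thm1-alpha-2} and check that $\sum_m 2^m \Phi(My_m)e^{-x_m^\alpha}<\infty$ for every $M>0$. The key analytic point throughout is the asymptotic evaluation of the Stieltjes integrals $\int_0^{2^{-m}}(\log^*(1/\e))^{1/\alpha}\,\Psi^{-1}(\dd\e)$ and its truncated variant, which I would compute via integration by parts and the change of variable $\e=\Psi(u)$ so that $\Psi^{-1}(\dd\e)=\dd u$. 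Once $y_m$ and $x_m$ are known up to constants and logarithmic factors, the summability reduces to a comparison of the polynomial factor $2^m\Phi(My_m)$ against the exponentially small factor $e^{-x_m^\alpha}$.

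Let me sketch part \eqref{Int-item-1}, which is the prototype and from which the others follow by analogous but heavier bookkeeping. With $\Psi(t)=t^p$ we have $\Psi^{-1}(\e)=\e^{1/p}$, so $\Psi^{-1}(\dd\e)=\frac1p\e^{1/p-1}\,\dd\e$, and a direct estimate gives $y_m\asymp 2^{-m/p}\,m^{1/\alpha}$ (the factor $(\log^*(1/\e))^{1/\alpha}$ contributes $m^{1/\alpha}$ at the scale $\e\approx 2^{-m}$), while the truncated integral in the denominator of $x_m$ is of the same order, so that $x_m\asymp \Phi^{-1}(C2^{-m})\,2^{m/p}\,m^{-1/\alpha}$. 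With the target $\Phi(t)=t^p(\log_2^*(1/t))^{-p/\alpha}$ one computes $\Phi^{-1}(C2^{-m})\asymp 2^{-m/p}(\log m)^{1/\alpha}$, whence $x_m\asymp (\log m)^{1/\alpha}m^{-1/\alpha}$. I would then show $x_m^\alpha\asymp (\log m)/m\cdot(\text{const})$; the point is to arrange constants so that $e^{-x_m^\alpha}$ decays like a negative power of $m$ fast enough. This is where one must be careful: a naive estimate gives $x_m^\alpha\to 0$, so the exponential factor does \emph{not} by itself provide decay, and summability must instead come from balancing $\Phi(My_m)\asymp 2^{-m}(\log m)^{-p/\alpha}\cdot(\text{poly in }m)$ against the factor $2^m$. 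I would track the precise constant $C$ so that the product $2^m\Phi(My_m)e^{-x_m^\alpha}$ is summable; the role of the double logarithm in $\Phi$ is precisely to furnish the extra $(\log m)^{-p/\alpha}$ decay needed after the $2^m\cdot 2^{-m}$ cancellation.

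For part \eqref{Int-item-2} I would run the same scheme with the roles of $\Psi$ and $\Phi$ exchanged: take $\Psi(t)=t^p(\log_2^*(1/t))^{-p/\alpha}$ and $\Phi(t)=t^p$, compute $\Psi^{-1}$ (now involving an inverse of a function with a double-log correction, handled by the standard asymptotic inversion $\Psi^{-1}(\e)\asymp \e^{1/p}(\log_2^*(1/\e))^{1/\alpha}$), and verify summability as before. Parts \eqref{Int-item-3} and \eqref{Int-item-4} are of the same type but with $\Psi=\Phi_{\beta_0}$ (resp.\ $\Phi_{c,r}$) of exponential-of-power form; here $\Psi^{-1}$ is a logarithmic-type function and the integrals $y_m,x_m$ acquire powers of $\log(1/\e)$, so the computations are more delicate but structurally identical. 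The strict inequalities $\beta<1-1/\alpha+\beta_0$ and $v>r$ are exactly the margins needed to make the summability strict, and I would identify them by extracting the leading-order exponent in the competition between $e^{-x_m^\alpha}$ and $\Phi(My_m)$.

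The main obstacle, and the step I expect to require the most care, is the asymptotic evaluation of the Stieltjes integrals $y_m$ and the truncated integral defining $x_m$ \emph{uniformly enough} to control the constants, since the summability condition is genuinely borderline: the geometric factors $2^m$ and $2^{-m}$ cancel, leaving only subexponential corrections, so every logarithmic factor and the precise value of $C$ must be tracked. In particular I would need the sharp two-sided asymptotics of $\int_0^{2^{-m}}(\log^*(1/\e))^{1/\alpha}\Psi^{-1}(\dd\e)$ rather than mere order-of-magnitude bounds, because the borderline nature of the sum means a wrong constant in the exponent could flip convergence to divergence. Establishing these asymptotics cleanly — most likely through integration by parts combined with the substitution $\e=\Psi(u)$ and an application of a Karamata-type or direct dominated-integral estimate — is the technical heart of the proof.
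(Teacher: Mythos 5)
Your overall strategy---deducing each item from Theorem~\ref{thm-alpha} by estimating the sequences $x_m,y_m$ of \eqref{con-1-thm1-alpha-2} and verifying the summability condition \eqref{con-1-thm1-alpha}---is exactly the paper's, but your evaluation of $x_m$ in case \eqref{Int-item-1} is wrong, and the error destroys the convergence mechanism. The denominator of $x_m$ is \emph{not} of the same order as $y_m$: its integrand carries $\log^*(2^{-m}/\e)$, not $\log^*(1/\e)$, and the substitution $\e=2^{-m}v$ gives \emph{exactly}
\begin{equation*}
\int_0^{2^{-m}}\Big(\log^*\Big(\frac{2^{-m}}{\e}\Big)\Big)^{1/\alpha}\,\Psi^{-1}(\dd\e)
=\frac{2^{-m/p}}{p}\int_0^1\big(\log^*(1/v)\big)^{1/\alpha}v^{1/p-1}\,\dd v
=K_{\alpha,p}\,2^{-m/p},
\end{equation*}
with no $m^{1/\alpha}$ factor, whereas $y_m\asymp m^{1/\alpha}2^{-m/p}$. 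Consequently, since $\Phi^{-1}(C2^{-m})\asymp C^{1/p}2^{-m/p}(\log m)^{1/\alpha}$, the correct conclusion is that $x_m\gtrsim C^{1/p}(\log m)^{1/\alpha}$ \emph{grows}, so that $e^{-x_m^\alpha}\lesssim m^{-\kappa C^{\alpha/p}}$ decays polynomially in $m$ with an exponent that can be made arbitrarily large, because Theorem~\ref{thm-alpha} only asks for the existence of \emph{some} $C$ (the paper takes $C=(p/\alpha+2)^{p/\alpha}K_{\alpha,p}^{-p}$, making the exponent $p/\alpha-\kappa C^{\alpha/p}=-2$). That polynomial decay is the sole source of convergence in \eqref{con-1-thm1-alpha}.

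The fallback mechanism you describe cannot replace it. Under your estimate one gets $x_m^\alpha\asymp C^{\alpha/p}(\log m)/m\to 0$ for every fixed $C$, so $e^{-x_m^\alpha}\to 1$ and tuning $C$ accomplishes nothing; and the remaining factor satisfies $2^m\Phi(My_m)\asymp M^p\,m^{p/\alpha}(\log m)^{-p/\alpha}\to\infty$, because the double logarithm in $\Phi$ only contributes $(\log_2^*(1/y_m))^{-p/\alpha}\asymp(\log m)^{-p/\alpha}$, which is far too weak to cancel $m^{p/\alpha}$. So with your numbers the series \eqref{con-1-thm1-alpha} actually \emph{diverges}, and the ``balancing after the $2^m\cdot2^{-m}$ cancellation'' you invoke does not exist: the true role of the double logarithm in $\Phi$ is not to supply that decay, but to enlarge $\Phi^{-1}(C2^{-m})$ by the factor $(\log m)^{1/\alpha}$ relative to the scale-invariant denominator $K_{\alpha,p}2^{-m/p}$, which is precisely what makes $x_m$ grow. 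The same conflation of $\log^*(2^{-m}/\e)$ with $\log^*(1/\e)$ would propagate to your treatment of \eqref{Int-item-2}--\eqref{Int-item-4}, where again the growth of $x_m$ (there $x_m\gtrsim(\log m)^{1/\alpha}$, $x_m\gtrsim m^{1+\beta_0-\beta}$, and $x_m\gtrsim e^{\delta m^{1/c}}$ respectively) carries the whole convergence; note also that in case \eqref{Int-item-2} the paper's proof works with $\Psi(t)=t^p(\log_2^*(1/t))^{+p/\alpha}$, consistent with recovering Jain--Monrad, rather than the inverse you wrote down.
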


It follows from   Theorem~\ref{thm-fine-var}  below  that Theorem~\ref{cor-Int}\eqref{Int-item-1} gives the optimal result for all Hermite processes (including the  fractional Brownian motion and 
the Rosenblatt process).   In the special case where $X$ is a centered  Gaussian process we recover 
the above cited result by Jain and Monrad~\cite[Theorem~3.2]{JaMo} from Theorem~\ref{cor-Int}\eqref{Int-item-2} if we set $\a=2$. 
The functions $\Phi_{\b}$,  defined in \eqref{Int-item-3}, play an important role in Fourier theory, where it follows  from the Salem--Baernstein theorem   that 
every continuous periodic function of bounded $\Phi_{\b}$-variation has  uniform convergent  Fourier series  if and only if $\b>1$, see Remark~\ref{Four-con} for more details. In particular, all  continuous 
periodic functions of bounded $p$-variation for some $p\geq 1$ have an uniform convergent Fourier series. For $\a>1$ we have  $1-1/\a>0$ and hence we may choose $\b=\b_0$ in Theorem~\ref{cor-Int}\eqref{Int-item-3}, which  shows that $\mathcal V(\Phi_\b,d)<\infty$ implies that $X$ is of bounded $\Phi_\b$-variation. This should be compared with Proposition~\ref{on-ness-con}, which shows that when $X$ is centered Gaussian ($\a=2$) and the function $\Phi$ satisfies the $\Delta_2$-condition (i.e.\ $\Phi(2x)\leq C \Phi(x)$ for all $x$), then the opposite implication holds, that is, if  $X$ is of bounded $\Phi$-variation then $\mathcal V(\Phi,d)<\infty$. Notice, however,  that the functions $\Phi_\b$, considered in Theorem~\ref{cor-Int}\eqref{Int-item-3}, do not satisfy the $\Delta_2$-condition. In the special case where $c=1$ and $X$ is centered Gaussian,  
  Theorem~\ref{cor-Int}\eqref{Int-item-4} implies  the second statement of 
Jain and Monrad~\cite[Theorem~3.2]{JaMo}.

\medskip
 In all of the above cited papers by Taylor~\cite{Taylor}, Dudley and Norvai{\v{s}}a~\cite{Dud-Nor}, Kawada and K\^ono~\cite{Ka-Ko} and Marcus and Rosen~\cite{Mar-Ros, Mar-Ros-pa-1}    the limiting $\Phi$-variation of some Gaussian processes is moreover considered. To 
recall this notion we let, for any  $\delta>0$,   $\Pi_\delta$ denote  the set of all partitions $\pi=\{0=t_0<\dots<t_n=1\}$ of $[0,1]$ such that $|\pi |:=\max_{i=1,\dots,n} t_i-t_{i-1}\leq \delta$, and for $\pi\in \Pi_\delta$ set  
\begin{equation*}\label{v_PHi-12}
v_{\Phi}(f,\pi)=\sum_{i=1}^{n}
 \Phi\big(|f(t_{i})-f(t_{i-1})|\big).
\end{equation*}
The \emph{limiting $\Phi$-variation} of a function $f: [0,1]\to\R$ is  defined by 
\begin{equation}\label{def-lim-Phi}
\mathcal V^*_\Phi(f):=\lim_{\delta\to 0} \Big(\sup\{v_{\Phi}(f,\pi):\pi \in \Pi_\delta\}\Big).
\end{equation}
Let $X$ be a fractional Brownian motion with Hurst index $H\in (0,1)$, and $\Phi$ denote the function
 \begin{equation*}
  \quad \Phi(t)=\frac{t^{\frac{1}{H}}}{(\log^*_2(1/t))^{\frac{1}{2H}}}, \qquad t>0. 
 \end{equation*}
Dudley and Norvai{\v{s}}a~\cite[Theorem~12.12]{Dud-Nor} showed that with probability one
 \begin{equation}\label{fine-variation-23}
\lim_{\delta\to 0} \Big(\sup\{v_{\Phi}(X,\pi):\pi \in \Pi_\delta\}\Big)=2^{\frac{1}{2H}},
\end{equation}
which characterizes the limiting $\Phi$-variation of the fractional Brownian motion, and includes the Brownian motion case for $H=1/2$, which goes back to the fundamental work 
Taylor~\cite[Theorem~1]{Taylor}.  Eq.~\eqref{fine-variation-23}  implies that the fractional Brownian motion is of bounded $\Phi$-variation, and that $\Phi$ is optimal for in the sense described in Theorem~\ref{thm-fine-var} below. 
Kawada and K\^ono~\cite{Ka-Ko} and Marcus and Rosen~\cite{Mar-Ros, Mar-Ros-pa-1}   derived the limiting $\Phi$-variation of  other classes of Gaussian processes with stationary increments. 
 On the other hand, the limiting $\Phi$-variation of a symmetric $\alpha$-stable L\'evy process $X$ with $\alpha\in (0,2)$ is characterized by Fristedt and  Taylor~\cite[Theorem~2]{Fri-Tay} and from their result it follows that there does not exists an optimal $\Phi$-variation function for an $\alpha$-stable L\'evy process,  in contrast to the Brownian motion.   

\smallskip
In the following we will recall the definition of Hermite processes (see  Taqqu~\cite{Taqqu-2} or Tudor~\cite{Tudor}): 
Let $B$ be an independently scattered symmetric Gaussian random measure on $\R$ with Lebesgue intensity measure. An Hermite process of order $m\in \N$  with Hurst parameter $H\in (1/2,1)$ is a stochastic process  $X=\{X(t)\!: t\in [0,1]\}$ of the form 
  \begin{equation}\label{eq:74}
 X(t)=c_0 \int_{\R^m}  \Big(\int_0^t \prod_{i=1}^m (v-u_i)_+^{-(1/2+(1-H)/m)}\,\dd v\Big) \,\dd B(u_1)\cdots \dd B(u_m).
 \end{equation}
   Here $c_0>0$ is a positive norming constant and the integral in \eqref{eq:74} is a multiple Wiener--It\^o integral, see e.g.\  Nualart~\cite{N}. 
 Hermite processes appear  as the limit in non-central limit theorems, see Taqqu~\cite{Taqqu, Taqqu-2} or Tudor~\cite{Tudor}, they have stationary
increments, are   self-similar with index $H$, and for $m\geq 2$  are non-Gaussian. An Hermite process $X$ of order $m=2$ is usually called a  Rosenblatt 
process, and for $m=1$, $X$  is the fractional Brownian motion. We may and do assume that $X$ has been chosen with continuous sample paths almost surely. 

\vskip 2 pt Our next result is the  following theorem  which characterizes the limiting $\Phi$-variation of Hermite processes.

\begin{theorem}\label{thm-fine-var}
Let  $X=\{X(t)\!: t\in [0,1]\}$ be an Hermite process of order $m\in \N$  with Hurst index $H\in (1/2,1)$, and $\Phi=\Phi_{m,H}$  be given by  
 \begin{equation}\label{def-Phi}
\Phi(t)=\frac{t^{\frac{1}{H}}}{(\log^*_2( 1/t))^{\frac{m}{2H}}}\qquad t>0.  
\end{equation}
Then with probability one 
\begin{equation}\label{fine-variation}
\lim_{\delta\to 0} \Big(\sup\{v_{\Phi}(X,\pi):\pi \in \Pi_\delta\}\Big)=\sigma_{m,H}
\end{equation}
where the constant $\sigma_{m,H}\in (0,\infty)$ is defined in \eqref{def-sigma-X}.  In particular, we deduce that  $X$ has sample paths of bounded $\Phi$-variation with probability one, and $\Phi$ is optimal in 
the sense that  for all $\tilde \Phi:\R_+\to\R_+$ satisfying $\tilde \Phi(x) /\Phi(x)\to \infty$ as $x\to 0$, we have $\mathcal V_{\tilde \Phi}(X)=\infty$
a.s. Moreover,   $\sigma_{m,H}\leq \mathcal V_\Phi(X) <\infty$ a.s. 
\end{theorem}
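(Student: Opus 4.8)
The plan is to exploit the self-similarity and stationarity of increments of the Hermite process to reduce everything to estimates on the normalized increments, and then to prove matching lower and upper bounds for the limiting $\Phi$-variation. First I would record that, since $X$ is self-similar of index $H$ with stationary increments and is given by the order-$m$ multiple integral \eqref{eq:74}, one has $\|X(t)-X(s)\|_{L^2}=\sigma_0|t-s|^H$ and the increment condition \eqref{eq:673} holds with $\alpha=2/m$ and $d(s,t)=c_m\|X(s)-X(t)\|_{L^2}=c\,|t-s|^H$, the exponential tail $\P\,(|X(1)|>\lambda)\le e^{-\lambda^{2/m}/C_m}$ being a consequence of hypercontractivity in the $m$-th Wiener chaos. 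With this $d$ one has $\mathcal V(\Psi,d)<\infty$ for $\Psi(t)=t^{1/H}$, since $\Psi(d(s,t))=c^{1/H}|s-t|$ sums to $c^{1/H}$ over any partition; hence Theorem~\ref{cor-Int}\eqref{Int-item-1}, applied with $\alpha=2/m$ and $p=1/H$, produces exactly the function $\Phi(t)=t^{1/H}(\log^*_2(1/t))^{-m/(2H)}$ of \eqref{def-Phi} and yields $\mathcal V_\Phi(X)<\infty$ almost surely. In particular $\mathcal V^*_\Phi(X)\le\mathcal V_\Phi(X)<\infty$, which already gives the finiteness assertions and, once the limit is identified, the inequality $\sigma_{m,H}\le\mathcal V_\Phi(X)$.

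\textbf{The sharp upper bound for the limiting variation.} Next I would prove $\mathcal V^*_\Phi(X)\le\sigma_{m,H}$. This is a quantitative sharpening of the chaining estimate behind Theorem~\ref{thm-alpha}: for $\pi\in\Pi_\delta$ one classifies the increments by dyadic size, $|X(t_i)-X(t_{i-1})|\asymp 2^{-kH}\lambda$, and bounds the number of intervals of a given length carrying an increment of $\lambda$ standard deviations using the tail in \eqref{eq:673}. The double-logarithmic gauge in $\Phi$ is calibrated precisely so that, after summing these tail bounds over the scales $k\ge\log_2(1/\delta)$, the total is at most the value of an explicit variational problem; as $\delta\to0$ this value converges to the constant $\sigma_{m,H}$ of \eqref{def-sigma-X}. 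I would package this as an almost-sure modulus of continuity (the analogue of \eqref{eq23}) together with a counting bound for the exceptional intervals.

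\textbf{The lower bound -- the crux.} The heart of the matter is the reverse inequality $\mathcal V^*_\Phi(X)\ge\sigma_{m,H}$, for which I would construct near-optimal partitions explicitly. The correct construction is \emph{multiscale}: fixing $\delta=2^{-K}$, at each dyadic scale $2^{-k}$ one selects a prescribed number of disjoint intervals on which the increment is about $\lambda_k$ standard deviations, with $\lambda_k\asymp(C_m\log(1/w_k))^{m/2}$, where the length-fractions $w_k$ are chosen to optimize $\sum_k w_k\,(\log(1/w_k)/\log k)^{m/(2H)}$ under $\sum_k w_k\le1$; the optimal profile spreads the mass across many dyadic scales and makes this sum converge, as $K\to\infty$, to the value $\sigma_{m,H}$ (for $m=1$ this reproduces the constant $2^{1/(2H)}$ of \eqref{fine-variation-23}). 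Making this rigorous almost surely is the main obstacle: I must show that the required number of intervals with such large increments actually occurs on almost every path and that the chosen intervals can be assembled, across scales, into a single partition of mesh $\le\delta$. For this I would use a second-moment estimate for the counts, controlling the covariances of the events $\{\,|X\text{-increment}|\ge 2^{-kH}\lambda_k\,\}$ over disjoint intervals via the diagram/product formula for multiple Wiener--It\^o integrals, so that well-separated fine increments are asymptotically independent and the counts concentrate, followed by a Borel--Cantelli argument along $\delta=2^{-K}$. The exact constant enters through the sharp tail asymptotic $\lim_{\lambda\to\infty}\lambda^{-2/m}\log\P\,(|X(1)|>\lambda)=-1/C_m$ for the order-$m$ chaos, which pins down $\lambda_k$ and hence the value of the variational problem; this tail asymptotic, together with the self-similar scaling (which, via a $0$--$1$ law, shows that $\mathcal V^*_\Phi(X)$ is a.s.\ constant), is what I expect to demand the most care.

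\textbf{Optimality of $\Phi$.} Finally, optimality is a by-product of the lower bound. Given $\tilde\Phi$ with $\tilde\Phi(x)/\Phi(x)\to\infty$ as $x\to0$, I would evaluate $\tilde\Phi$ along the very same near-optimal partitions: since every selected increment tends to $0$ as $\delta\to0$, writing $\tilde\Phi(\Delta)=(\tilde\Phi(\Delta)/\Phi(\Delta))\,\Phi(\Delta)$ and using that the ratio blows up uniformly on the selected scales forces $\sum_i\tilde\Phi(|X(t_i)-X(t_{i-1})|)\to\infty$, whence $\mathcal V_{\tilde\Phi}(X)=\infty$ a.s. Combined with the three preceding steps, this establishes \eqref{fine-variation} and all the remaining claims.
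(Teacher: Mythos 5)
Your setup, the finiteness claim via Theorem~\ref{cor-Int}\eqref{Int-item-1}, and the optimality argument (rescaling $v_{\tilde\Phi}$ by the infimum of $\tilde\Phi/\Phi$ over small increments, using continuity of the paths) are all correct and essentially identical to the paper's. The two core bounds, however, have genuine gaps. For the upper bound $\mathcal V^*_\Phi(X)\le\sigma_{m,H}$ you propose a dyadic counting argument driven by ``the tail in \eqref{eq:673}''. That condition only gives $\P(|X(t)-X(s)|>\lambda d(s,t))\le 2e^{-\lambda^{2/m}}$, i.e.\ a tail with an unspecified (and wrong) constant in the exponent; no amount of counting based on it can produce the exact constant $\sigma_{m,H}$, which is a variational quantity over the unit ball of $L^2(\R)$ built from the kernel $Q$, see \eqref{def-sigma-X}. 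The paper gets the sharp constant from Borell's large deviation theorem for the \emph{supremum} of the chaos process over a two-sided neighborhood, $t^{-2/m}\log\P(\sup_{(u,v)\in S(1)}|Z(u,v)|\ge t)\to -1/(2\varsigma_Z^{2/m})$ with $\varsigma_Z=2^{-m/2}\sigma_{m,H}^H$, yielding a pointwise two-sided law of the iterated logarithm (Lemma~\ref{two-sided-bound-4}); this is then combined with a Lebesgue-density/Tonelli argument to show the ``intermediate'' increments occupy total length at most $\e$, while the ``huge'' increments are exactly the exceptional set $E(\o)$ of Theorem~\ref{thm-alpha}, whose contribution is killed by the tail of the chaining series \eqref{sdhkfs}. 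Your sketch is missing both the sharp large-deviation input and this three-regime decomposition.

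The lower bound is where your approach would actually fail as written. You propose a multiscale selection of intervals with concentration of the counts of large increments proved by a second-moment/diagram-formula estimate, on the grounds that ``well-separated fine increments are asymptotically independent'', plus a $0$--$1$ law to make $\mathcal V^*_\Phi(X)$ a.s.\ constant. For Hermite processes of order $m\geq 2$ the increments are long-range dependent (this is precisely why these processes arise in non-central limit theorems), there is no available route via the product formula to covariance control of indicators of large-deviation events, and no off-the-shelf $0$--$1$ law for such path functionals of non-Gaussian chaos processes; both steps are unsubstantiated and constitute the hard core of the problem, not technical details. The paper avoids all of this: it imports the Mori--Oodaira functional law of the iterated logarithm, adapted to small scales (Proposition~\ref{LIL-Hermite} in Appendix~\ref{appendix-2}), to get $\limsup_{n}|X(1/n)|/\Xi(1/n)=\sigma_{m,H}^H$ a.s.\ (Corollary~\ref{cor-LIL}); by stationarity of increments and Tonelli this pointwise statement holds at Lebesgue-a.e.\ $t$ simultaneously, and a Marcinkiewicz-type Vitali covering lemma (Lemma~\ref{lemma-Mar}) converts the a.e.\ lower oscillation bound directly into $\mathcal V^*_{\phi^{-1}}(X)\geq 1$, hence $\mathcal V^*_\Phi(X)\geq(1-\e)^{1/H}\sigma_{m,H}$. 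If you want to salvage your construction, you would in effect have to reprove that functional LIL, whose existing proof itself relies on reproducing-kernel and Borell-type large deviation techniques rather than second-moment counting.
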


We note that the  functions $\Phi=\Phi_{m,H}$ are decreasing in $m$, in particular, for all  $1\leq m< k$ we have  that 
$ \Phi_{k,H}(x)/\Phi_{m,H}(x)\to 0$ as $x\to 0$, and hence Theorem~\ref{thm-fine-var}
 shows that when $m$ increases then the sample paths of $X$ becomes more ``rough''  when measured in terms of $\Phi$-variation. In particular, the Rosenblatt
 process with Hurst index $H$ has more ``rough'' sample paths than the fractional Brownian motion.  
The next remark concerns the constant $\sigma_{m,H}$ appearing in Theorem~\ref{thm-fine-var}. 

\begin{remark}\label{remark-const}
The constant $\sigma_{m,H}$ in \eqref{fine-variation} is defined by 
\begin{equation}\label{def-sigma-X}
\sigma_{m,H}=2^{\frac{m}{2H}}\Big(\sup\Big\{\Big| \int_{\R^m} Q(u_1,\dots,u_m) \xi(u_1)\cdots\xi(u_m)\,\dd u_1\cdots \dd  u_m\Big|: \xi\in L^2(\R), \,\|\xi\|_2\leq 1\Big\}\Big)^{1/H}
\end{equation}
where 
\begin{equation*}
Q(u_1,\dots,u_m)=c_0  \int_0^1 \prod_{i=1}^m (v-u_i)_+^{-(1/2+(1-H)/m)}\,\dd v.
\end{equation*}
We note that $0<\sigma_{m,H}<\infty$, and in fact we have the upper bound  
 \begin{align}\label{eq:840}\notag
{}& \sigma_{m,H}^H\leq 2^{\frac{m}{2}}\|Q\|_{L^2(\R^m)} = 2^{\frac{m}{2}} (m!)^{-\frac{1}{2}} \| X(1)\|_{L^2} 
 \end{align}
and \begin{equation} \label{eq:841}  
\|Q\|_{L^2(\R^m)}=c_0 \Big[\frac{\beta(1/2-\frac{1-H}{m}, \frac{2-2H}{m})^m }{H(2H-1)} \Big]^{1/2}
 \end{equation}
  where $\beta(x,y):=\int_0^1 t^{x-1} (1-t)^{y-1}\,\dd t$ denotes the beta function evaluated in $(x,y)\in (0,\infty)^2$. Equality~\eqref{eq:841} follows by the proof of \cite[Proposition~3.1]{Tudor}.
By duality arguments it follows that $\sigma_{m,H}^H=2^{\frac{1}{2}}\|X(1)\|_{L^2}$ for  $m=1$,  whereas  $\sigma_{m,H}^H<2^{\frac{m}{2}} \|X(1)\|_{L^2}$ for $m\geq 2$. 
\end{remark}

\vskip 4 pt  The next   results are  related to Theorems~\ref{thm-alpha}--\ref{thm-fine-var}. We will in particular estimate the associated $\Phi$-variation norm of $X$. 
Some notions and notation are necessary.  Let $\mathcal
B_\Phi$  be the class      of all  real functions $f:[0,1]\to \R$ 
 such that $\mathcal   V_\Phi(f)<\infty$,    namely having  bounded $\Phi$-variation. Recall  some basic facts in the case where $\Phi$ is convex. Then   $\Phi$ is also continuous.  The class $\mathcal B_\Phi$   is a symmetric and convex set, but is not necessarily a linear space.  
Musielak and Orlicz \cite{MuO} haved proved that $\mathcal B_{\Phi} $ is a vector space if and only if $\Phi$ verifies the
$\D_2$-condition ($\Phi(2x)\le C\Phi(x)$ for all   $x$).  
In the latter case we define for any $f\in \mathcal B_{\Phi}$,
\begin{equation*}
\| f\|_\Phi = \inf\{r>0 : \mathcal V_\Phi(f/r)\leq 1\},
\end{equation*}
  and  $\lvert \lvert\lvert f\rvert \rvert \rvert_\Phi = |f(0)|+\| f \|_\Phi$.  We know by a theorem of Maligranda and Orlicz \cite{MaO},  that the space $( \mathcal B_\Phi ,\lvert \lvert\lvert \cdot\lvert \lvert\lvert_{\Phi})$ is a Banach space, and in fact a Banach algebra. 
  When $\Phi(x)= |x|^p$, $p\geq 1$,  we denote this space $\mathcal B_p$, and we will write $\mathcal V_\Phi(\cdot)= \mathcal V_p(\cdot)$ and $\|\cdot \|_\Phi=\| \cdot \|_p$; we note that the latter takes the simple form $\| f\|_p = \mathcal V_p(f)^{1/p}$. In the following we give explicit estimates on the expected $\Phi$-variation norm of Gaussian processes under some additional assumptions on $\Phi$ and $\Psi$. In particular, Corollary~\ref{cor-int} completes Jain and Monrad's  result \cite[Theorem~3.2]{JaMo} with an explicit estimate of the expected $p$-variation norm. We note that in Jain and Monrad's  result, $\Phi$ and $\Psi$ are given 
  and condition $\mathcal V(\Psi, d)<\infty$  
 reads on $d$. Here we adopt the same point of view and we notice   that our main result can also be read this way for Gaussian processes, namely   with $\Phi$ and $\Psi$ satisfying \eqref{con-1-thm1-alpha}, next $X$ subject to satisfy   condition $\mathcal V(\Psi, d)<\infty$ (the   increment condition \eqref{eq:673} is automatically  satisfied with $\a=2$). 

\begin{theorem} \label{t1} 
Suppose that $\Phi,\Psi:\R_+\to\R_+$ are strictly increasing continuous functions with $\Phi(0)=\Psi(0)=0$, $\lim_{t\to\infty} \Phi(t)=\lim_{t\to\infty} \Psi(t)=\infty$ such that $\Phi$ and $\Psi$ satisfy \eqref{con-1-thm1-alpha} of 
Theorem~\ref{thm-alpha}. In addition, suppose that $\Phi$ is convex and $\Psi^{-1}$ is absolute continuous with a derivative $(\Psi^{-1})'$ satisfying  
\begin{equation}\label{eq:436728}
 (\Psi^{-1})'( x y)\leq   K_0 x^{q-1} (\Psi^{-1})'(y),  \qquad \ \text{and that} \qquad \   \Phi(x y) \leq   K_0 x^{p} \Phi(y)
\end{equation}
for all $x,y\geq 0$ and some constants $p,q,K_0>0$. 
For all separable, centered Gaussian processes  $X=\{X(t)\!: t\in [0,1]\}$ with 
$d(s,t)=\|X(s)-X(t)\|_{L^2}$ for all $s,t\in [0,1]$,  such that
$\mathcal V(\Psi,d)<\infty$, 
the following estimate holds 
 \begin{equation*}
\E \| X \|_\Phi \leq  K \Big(\mathcal V(\Psi,d)+\mathcal V(\Psi,d)^{p/2+1}+\mathcal V(\Psi,d)^{p q}\Big)^{1/p} 
\end{equation*}
where $K$ is a finite constant not depending on  process $X$. 
%
%
   \end{theorem}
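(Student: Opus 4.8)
The plan is to separate the estimate into a purely analytic reduction of the norm $\|X\|_\Phi$ to the $\Phi$-variation $\mathcal V_\Phi(X)$, followed by a probabilistic estimate of $\E\,\mathcal V_\Phi(X)$ built on the chaining construction already used to prove Theorem~\ref{thm-alpha}. For the first step I would exploit the upper growth condition $\Phi(xy)\le K_0x^{p}\Phi(y)$ in \eqref{eq:436728} with $x=1/r$: this gives $\mathcal V_\Phi(X/r)\le K_0r^{-p}\mathcal V_\Phi(X)$ for every $r>0$, so that the choice $r=(K_0\mathcal V_\Phi(X))^{1/p}$ makes $\mathcal V_\Phi(X/r)\le 1$ and hence, by the definition of the norm,
\[
\|X\|_\Phi\le \big(K_0\,\mathcal V_\Phi(X)\big)^{1/p}\qquad\text{a.s.}
\]
Taking expectations and using that $p\ge 1$ (which is forced by the convexity of $\Phi$ together with $\Phi(0)=0$ and the growth bound, since these give $x\le K_0x^{p}$ for all $x\ge 1$), Jensen's inequality applied to the concave map $t\mapsto t^{1/p}$ yields $\E\,\|X\|_\Phi\le K_0^{1/p}(\E\,\mathcal V_\Phi(X))^{1/p}$. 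It therefore suffices to prove $\E\,\mathcal V_\Phi(X)\le K'(\mathcal V(\Psi,d)+\mathcal V(\Psi,d)^{p/2+1}+\mathcal V(\Psi,d)^{pq})$, after which the claim follows with $K=(K_0K')^{1/p}$.

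For the probabilistic step I would reuse the nested balanced partitions underlying the proof of Theorem~\ref{thm-alpha}. Writing $V=\mathcal V(\Psi,d)$ and $h(t)=\mathcal V(\Psi,d;[0,t])$, for each level $m$ one selects a partition $\pi_m$ into $2^m$ blocks of equal $\Psi$-content $V2^{-m}$, so that consecutive nodes satisfy $d\le\Psi^{-1}(V2^{-m})$. The deterministic core of that proof bounds $v_\Phi(X,\pi)$ for an \emph{arbitrary} partition $\pi$ by a sum over levels, yielding a pathwise inequality of the shape $\mathcal V_\Phi(X)\le C\sum_{m\ge0}2^m\Phi(O_m)$, where $O_m$ is the maximal oscillation of $X$ over the level-$m$ blocks as resolved by the finer partitions. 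Since $X$ is Gaussian we are in the case $\alpha=2$; a single-block oscillation is then sub-Gaussian, and by the first condition in \eqref{eq:436728}, via $\Psi^{-1}(V2^{-m})\le K_0V^{q}\Psi^{-1}(2^{-m})$ (obtained from the substitution $r=V\rho$ in $\Psi^{-1}(Vs)=\int_0^{Vs}(\Psi^{-1})'(r)\,\dd r$), its $\|\cdot\|_{\phi_2}$-norm is at most $K_0V^{q}$ times the local chaining sum $\int_0^{2^{-m}}(\log^*(2^{-m}/\e))^{1/2}\,\Psi^{-1}(\dd\e)$, exactly the quantity in the denominator of $x_m$ in \eqref{con-1-thm1-alpha-2}. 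A union bound over the $2^m$ blocks then gives $\P(O_m>u)\le 2^{m+1}\exp(-(u/\sigma_m)^2)$ with $\sigma_m$ of this order.

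Then $\E\,\mathcal V_\Phi(X)\le C\sum_m 2^m\,\E\,\Phi(O_m)$, and I would estimate each $\E\,\Phi(O_m)=\int_0^\infty\P(O_m>u)\,\dd\Phi(u)$ by splitting the integral at the crossover $u_m^\ast$ where the union bound saturates: on the bulk one controls $\Phi$ by its value at $u_m^\ast$, while on the tail the weight $\exp(-(u/\sigma_m)^2)$ reproduces the exponential factor $e^{-x_m^2}$ appearing in \eqref{con-1-thm1-alpha}. The remaining $V^{q}$ sitting inside $\Phi$ is pulled out by the second condition in \eqref{eq:436728}, turning it into $V^{pq}$. Term by term the resulting series is dominated by a multiple of $\sum_m 2^m\Phi(My_m)e^{-x_m^2}$, finite by hypothesis, and collecting the $V$-powers from the three regimes — the coarsest scale, the Gaussian large-deviation factor of order $(\log)^{1/2}$ (which after the $p$-homogeneous rescaling of $\Phi$ and combination with the total content $V$ contributes the exponent $p/2+1$), and the $\Psi^{-1}$-rescaling (contributing $pq$) — gives $\E\,\mathcal V_\Phi(X)\le K'(V+V^{p/2+1}+V^{pq})$.

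The main obstacle I anticipate lies in this last bookkeeping: one must integrate the Orlicz tail against $\dd\Phi$ uniformly in $m$, keep the dependence on $V$ cleanly separated from the dependence on the level $m$, and match the series term-by-term with $\sum_m 2^m\Phi(My_m)e^{-x_m^2}$ so that \eqref{con-1-thm1-alpha} applies, all while checking that the two competing contributions, namely the $\sqrt{\log}$ fluctuations from the maxima over $2^m$ blocks and the $\Psi^{-1}$-rescaling factor $V^q$, combine into exactly the stated exponents $p/2+1$ and $pq$. By contrast the reduction of the first paragraph and the tail-integration scheme are routine.
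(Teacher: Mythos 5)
Your opening reduction is sound, and it is in fact the same device the paper uses at the end of its own proof: the homogeneity bound $\mathcal V_\Phi(X/r)\le K_0 r^{-p}\mathcal V_\Phi(X)$ from \eqref{eq:436728} gives $\|X\|_\Phi\le (K_0\mathcal V_\Phi(X))^{1/p}$, and $p\ge 1$ does follow from convexity as you argue. The genuine gap is in your probabilistic step. The pathwise inequality you build on, $\mathcal V_\Phi(X)\le C\sum_m 2^m\Phi(O_m)$ with $O_m$ the maximal oscillation over the roughly $F(1)2^m$ level-$m$ blocks, is true but useless: its right-hand side is almost surely \emph{infinite} in exactly the situations the theorem is designed for, so no tail-integration bookkeeping can make $\sum_m 2^m\E\Phi(O_m)$ finite. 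Concretely, take $X$ a Brownian motion, $\Psi(t)=t^2$, $\Phi(t)=t^2/\log_2^*(1/t)$ (case \eqref{Int-item-1} of Theorem~\ref{cor-Int} with $p=\alpha=2$; here $F(t)=t$ and the level-$m$ blocks are the dyadic intervals of length $2^{-m}$). L\'evy's modulus gives $O_m\asymp \sqrt{m}\,2^{-m/2}$, hence $2^m\Phi(O_m)\asymp m/\log m\to\infty$ and the series diverges pathwise. Your crossover scheme cannot escape this: to get $\sum_m 2^m\Phi(u_m^*)<\infty$ you need $u_m^*\ll\Phi^{-1}(2^{-m})$, which forces $u_m^*/\sigma_m\to 0$, so the tail term is at least of order $2^m\cdot N_m\cdot\Phi(\sigma_m)\gtrsim 2^m(\log m)^{-p/2}$ and diverges; while to beat the factor $2^m N_m\approx 2^{2m}$ in the tail you need $u_m^*\gtrsim\sigma_m\sqrt{m}$, and then the bulk term $2^m\Phi(u_m^*)\gtrsim (m/\log m)^{p/2}$ diverges. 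This is not a bookkeeping issue but a structural one.

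The idea you are missing — the heart of the paper's proofs of Theorems~\ref{thm-alpha} and \ref{t1} — is that the exponential factor $e^{-x_m^2}$ must multiply the \emph{number of exceptional blocks}, not sit in the tail of a maximum. Increments with $\Phi(|X(t_i)-X(t_{i-1})|)\le C[F(t_i)-F(t_{i-1})]$ are summed deterministically against $F$ and contribute at most $CF(1)$ in total; only the remaining increments (the set $E(\omega)$ of \eqref{def-of-E-omega}) are estimated through oscillations, and their number at level $m$ is at most $9Z_{m-2}(\omega)$, where $Z_m$ counts blocks whose oscillation exceeds the threshold $C2^{-m}$ and satisfies $\E Z_m\le 4F(1)2^m e^{-x_m^2}$ by \eqref{zm1}. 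This produces the pathwise bound \eqref{eq:2424}, in which the random count $Z_{m-2}$ replaces your deterministic $2^m$, and it is this bound whose size is controlled by hypothesis \eqref{con-1-thm1-alpha}. Note also that the paper does not bound $\E\mathcal V_\Phi(X)$ directly (the modulus $\Theta$ and the counts $Z_m$ in \eqref{eq:2424} are correlated): it obtains a deterministic bound on $\mathcal V_\Phi(X)$, hence on $\|X\|_\Phi$ via your homogeneity trick, on the event $\{\Theta\le 4\E\Theta\}\cap\{\mathcal Z\le 4\E\mathcal Z\}$ of probability at least $1/2$, and then upgrades this median-type bound on the Gaussian seminorm $\|X\|_\Phi$ to a bound on $\E\|X\|_\Phi$ by Fernique's strong integrability theorem. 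Some substitute for that last step (or for the correlation problem) would be needed even after repairing your level-$m$ estimates.
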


   \begin{corollary}\label{cor-int}
Let    $p\geq 1$  and set 
 \begin{equation*}\label{eq:J-M-cond}
\Psi(x) = x^p[\log_2^*(1/(x\wedge 1))]^{p/2}\qquad x>0.
 \end{equation*}
 For all  separable, centered Gaussian processes  $X=\{X(t)\!: t\in [0,1]\}$ with  $d(s,t)=\|X(s)-X(t)\|_{L^2}$ for all $s,t\in [0,1]$, the estimate holds 
  \begin{equation}\label{int-var-X}
\E\| X \|_p \leq  K \mathcal W_p^{1/p} (1+\mathcal W_p^{1/2})\qquad \text{where}  \qquad \mathcal W_p:=\mathcal V(\Psi,d)
 \end{equation}
 and $K$ is a finte constant  not depending on process $X$. 
\end{corollary}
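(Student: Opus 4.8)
The plan is to deduce the corollary from Theorem~\ref{t1} by taking $\Phi(x)=x^p$. Since $X$ is Gaussian we have $\alpha=2$ automatically, and for this choice of $\Phi$ the associated norm is exactly the $p$-variation norm, $\|X\|_\Phi=\|X\|_p$. The two structural hypotheses on $\Phi$ are immediate: for $p\geq 1$ the map $x\mapsto x^p$ is convex, and $\Phi(xy)=(xy)^p=x^p\Phi(y)$, so the second estimate in \eqref{eq:436728} holds with this same exponent $p$ and $K_0=1$. Moreover, the pair $(\Phi,\Psi)$ with $\Psi(x)=x^p[\log^*_2(1/(x\wedge 1))]^{p/2}$ is precisely the pair used, with $\alpha=2$, in Theorem~\ref{cor-Int}\eqref{Int-item-2}, whose proof proceeds through Theorem~\ref{thm-alpha}; hence condition \eqref{con-1-thm1-alpha} is satisfied for this pair. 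It remains only to verify the hypotheses of Theorem~\ref{t1} that concern $\Psi^{-1}$, and then to simplify the resulting bound.

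The main work is the analysis of $\Psi^{-1}$. First I would record that $\Psi$ is a strictly increasing bijection of $\R_+$, equal to a constant multiple of $x^p$ for $x\geq 1$ and carrying only a slowly varying log--log correction for $x<1$, and smooth away from the single point $x=1$; consequently $\Psi^{-1}$ is strictly increasing and absolutely continuous, with $\Psi^{-1}(y)\asymp y^{1/p}(\log^*_2(1/\,\cdot\,))^{-1/2}$ near $0$. Differentiating, $(\Psi^{-1})'(y)$ behaves like $\tfrac1p y^{1/p-1}$ multiplied by a slowly varying factor, which identifies the relevant exponent as $q=1/p$ in the first inequality of \eqref{eq:436728}. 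The substance of this step is to establish the scaling
\begin{equation*}
(\Psi^{-1})'(xy)\leq K_0\, x^{q-1}\,(\Psi^{-1})'(y)\qquad\text{for all }x,y\geq 0,
\end{equation*}
\emph{uniformly} and not merely asymptotically: the power part $y^{1/p-1}$ scales exactly, so the task reduces to bounding the ratio of the log--log factors evaluated at $xy$ and at $y$ by a constant, together with controlling the behaviour across the point where the argument of $\Psi^{-1}$ crosses $\Psi(1)$. This uniform control of the slowly varying correction, over the full range of $x$ and $y$ rather than in a limit, is the delicate point of the argument.

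With $q=1/p$ in hand we have $pq=1$, so the conclusion of Theorem~\ref{t1} reads
\begin{equation*}
\E\|X\|_p\leq K\big(\mathcal W_p+\mathcal W_p^{\,p/2+1}+\mathcal W_p\big)^{1/p}
\leq K'\big(\mathcal W_p+\mathcal W_p^{\,p/2+1}\big)^{1/p},
\end{equation*}
where $\mathcal W_p=\mathcal V(\Psi,d)$ and the factor $2$ has been absorbed into the constant. Finally, using the subadditivity of $t\mapsto t^{1/p}$ for $p\geq 1$ gives
\begin{equation*}
\big(\mathcal W_p+\mathcal W_p^{\,p/2+1}\big)^{1/p}\leq \mathcal W_p^{1/p}+\mathcal W_p^{1/2+1/p}=\mathcal W_p^{1/p}\big(1+\mathcal W_p^{1/2}\big),
\end{equation*}
which is exactly the asserted estimate \eqref{int-var-X}.
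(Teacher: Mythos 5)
Your route is the same as the paper's: apply Theorem~\ref{t1} with $\Phi(x)=x^p$, note that $\Phi$ is convex with $\Phi(xy)=x^p\Phi(y)$, quote the proof of Theorem~\ref{cor-Int}\eqref{Int-item-2} for condition \eqref{con-1-thm1-alpha}, identify $q=1/p$, and finish with subadditivity of $t\mapsto t^{1/p}$ (your closing algebra is correct, and more explicit than the paper's). The gap is the one step you defer, the first half of \eqref{eq:436728}, and unfortunately it is not just unproven but impossible in the form you state it: the ratio of the log--log factors at $xy$ and at $y$ is \emph{not} bounded over the full range $x,y\ge 0$. Take $y\to 0$ and $x=1/y$, so $xy=1$. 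Then $(\Psi^{-1})'(xy)=(\Psi^{-1})'(1)$ is a fixed positive constant (note $\Psi(1)=[\log^*_2(1)]^{p/2}<1$, and $\Psi$ is a smooth power function above $1$), while
\begin{equation*}
x^{1/p-1}(\Psi^{-1})'(y)=y^{1-1/p}(\Psi^{-1})'(y)\asymp \big(\log^*_2(1/y)\big)^{-1/2}\longrightarrow 0,
\end{equation*}
using $(\Psi^{-1})'(y)\sim \tfrac1p y^{1/p-1}(\log^*_2(1/y))^{-1/2}$ as $y\to 0$. Hence no constant $K_0$ makes $(\Psi^{-1})'(xy)\le K_0x^{q-1}(\Psi^{-1})'(y)$ hold for all $x,y\ge0$ with $q=1/p$. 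The inequality does hold uniformly on the range $x\le 1$ (then $xy\wedge1\le y\wedge1$, so the log--log factor moves in the right direction), but not beyond it.

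You are in good company: the paper's own verification has exactly the same defect, since its chain of inequalities passes through $[\log^*_2(1/(xy\wedge1))]^{-1/2}\le[\log^*_2(1/(y\wedge1))]^{-1/2}$, which is equivalent to $xy\wedge1\le y\wedge1$ and fails for $x>1$ (the counterexample above applies verbatim). What saves the corollary is that the proof of Theorem~\ref{t1} invokes the hypothesis only once, with $x=F(1)=\mathcal W_p$ and $y=\e$ integrated over $(0,1)$, and there the needed bound holds for a cruder reason: since $\log^*_2(1/(u\wedge1))\ge\log^*_2(1)>0$, one has $(\Psi^{-1})'(u)\le K u^{1/p-1}$ for all $u>0$, whence
\begin{equation*}
F(1)\int_0^1\big(\log^*(1/\e)\big)^{1/2}(\Psi^{-1})'(F(1)\e)\,\dd\e\le K F(1)^{1/p}\int_0^1\big(\log^*(1/\e)\big)^{1/2}\e^{1/p-1}\,\dd\e = K' F(1)^{1/p},
\end{equation*}
which is precisely the estimate $\delta(D)\le K'F(1)^{q}$, $q=1/p$, that Theorem~\ref{t1} extracts from \eqref{eq:436728}. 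So to make your (and the paper's) argument rigorous, you must either prove this integrated estimate directly, going inside the proof of Theorem~\ref{t1}, or reformulate Theorem~\ref{t1} so that \eqref{eq:436728} is only required where it is used; the uniform pointwise inequality over all $x,y\ge 0$ that you propose to establish cannot be the route.
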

 Recall that in the setting of Corollary~\ref{cor-int} the condition $\mathcal W_p<\infty$ is Jain and Monrad's sufficient conditions of bounded $p$-variation. 
In the case where $\mathcal W_p\leq 1$, \eqref{int-var-X} simplifies to the estimate $\E\|X \|_p\leq K  \mathcal W_p^{1/p}$.   The next result gives a necessary condition for a centered Gaussian process to  be of bounded $\Phi$-variation in the case where $\Phi$ satisfies the $\Delta_2$-condition, i.e.,  there exists a finite constant $C$ such that $\Phi(2x)\leq C \Phi(x)$ for all $x\geq 0$.

   \begin{proposition}\label{on-ness-con}
  Suppose that $X=\{X(t)\!: t\in [0,1]\}$ is a separable, centered Gaussian   process,  $d(s,t)=\|X(s)-X(t)\|_{L^2}$ for $s,t\in [0,1]$,   $\Phi$ is convex and satisfies the $\Delta_2$-condition.    If $X$ is of bounded
  $\Phi$-variation then $\mathcal V(\Phi,d)<\infty$. 
     \end{proposition}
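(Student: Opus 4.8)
\emph{The plan} is to pass from the almost sure finiteness of the sample‑path $\Phi$-variation to the deterministic bound $\mathcal V(\Phi,d)<\infty$ by taking expectations, the essential point being a Gaussian integrability argument forcing $\E\,\mathcal V_\Phi(X)<\infty$.

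First I would record an elementary lower bound, which uses neither convexity nor the $\Delta_2$-condition. Fix a partition $\pi=\{0=t_0<\dots<t_n=1\}$. Each increment $X(t_i)-X(t_{i-1})$ is centered Gaussian with $\|X(t_i)-X(t_{i-1})\|_{L^2}=d(t_i,t_{i-1})$, so $|X(t_i)-X(t_{i-1})|\dist d(t_i,t_{i-1})\,|N|$ with $N$ standard normal. Since $\Phi$ is increasing, for any $\sigma>0$ we have $\Phi(\sigma|N|)\ge\Phi(\sigma)$ on $\{|N|\ge 1\}$, so $\E\,\Phi(\sigma|N|)\ge c\,\Phi(\sigma)$ with $c:=\P(|N|\ge 1)>0$; applied with $\sigma=d(t_i,t_{i-1})$ this gives $\E\,\Phi(|X(t_i)-X(t_{i-1})|)\ge c\,\Phi(d(t_i,t_{i-1}))$. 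Summing over $i$ and using $v_\Phi(X,\pi)\le \mathcal V_\Phi(X)$,
\[
c\sum_{i=1}^n \Phi\big(d(t_i,t_{i-1})\big)\le \E\, v_\Phi(X,\pi)\le \E\,\mathcal V_\Phi(X),
\]
and a supremum over $\pi$ yields $c\,\mathcal V(\Phi,d)\le \E\,\mathcal V_\Phi(X)$. It therefore suffices to show $\E\,\mathcal V_\Phi(X)<\infty$.

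The crux is to upgrade the almost sure finiteness $\mathcal V_\Phi(X)<\infty$ to integrability, and this is where convexity and $\Delta_2$ enter. By the Maligranda--Orlicz theorem recalled above, $\|\cdot\|_\Phi$ is a genuine seminorm on $\mathcal B_\Phi$; for a fixed partition $\pi$ the functional $f\mapsto \|f\|_{\Phi,\pi}$ built from $\pi$ alone is the Minkowski gauge of a symmetric convex subset of $\R^{n+1}$, hence a continuous seminorm of $(f(t_0),\dots,f(t_n))$, and $\|f\|_\Phi=\sup_\pi\|f\|_{\Phi,\pi}$. Using separability of $X$ one restricts this supremum to partitions with nodes in the countable separant set, so that $\omega\mapsto\|X(\cdot,\omega)\|_\Phi$ is a measurable seminorm of the centered Gaussian process $X$, finite a.s.\ by hypothesis. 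Fernique's integrability theorem for measurable seminorms of Gaussian measures then gives $\E\exp(\eps\|X\|_\Phi^2)<\infty$ for some $\eps>0$; in particular $\E\|X\|_\Phi^{q}<\infty$ for every $q>0$.

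It remains to convert back to $\mathcal V_\Phi$. Iterating $\Phi(2x)\le C\Phi(x)$ gives $\Phi(rx)\le C\,r^{q_0}\Phi(x)$ for $r\ge 1$ with $q_0=\log_2 C$, while $\Phi(rx)\le\Phi(x)$ for $0<r\le 1$. Writing $R=\|X\|_\Phi$ and using $\mathcal V_\Phi(X/R)\le 1$, these two estimates give $\mathcal V_\Phi(X)\le \max(1,C\,R^{q_0})$, whence $\E\,\mathcal V_\Phi(X)\le 1+C\,\E\|X\|_\Phi^{q_0}<\infty$; combined with the first step, $\mathcal V(\Phi,d)\le c^{-1}\E\,\mathcal V_\Phi(X)<\infty$. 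I expect the main obstacle to be precisely the integrability step: checking that $\|\cdot\|_\Phi$ is a bona fide \emph{measurable} seminorm, in particular the measurability reduction of the supremum over all partitions to a countable family via separability, so that the Fernique-type zero--one/integrability principle applies. Everything else is elementary, and the hypotheses enter only to secure the seminorm structure and the polynomial conversion between $\mathcal V_\Phi$ and the homogeneous quantity $\|\cdot\|_\Phi$ on which Fernique's theorem operates.
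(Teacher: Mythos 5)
Your proof is correct and takes essentially the same route as the paper's: both arguments upgrade the almost sure finiteness to $\E\,\mathcal V_\Phi(X)<\infty$ via Fernique's strong integrability of the measurable Gaussian seminorm $\|X\|_\Phi$ (with separability handling measurability) together with the iterated $\Delta_2$-condition to pass between $\|X\|_\Phi$ and $\mathcal V_\Phi(X)$, and then bound $\E\,\mathcal V_\Phi(X)$ from below partition-wise to recover $\mathcal V(\Phi,d)$. The only (harmless) difference is in that last step: the paper uses convexity and Jensen's inequality, giving $\E\,\Phi(|X(t_i)-X(t_{i-1})|)\geq \Phi\big(c\, d(t_i,t_{i-1})\big)$ with $c=\sqrt{2/\pi}$ and then invokes $\Delta_2$ once more to absorb the constant $c$ inside $\Phi$, whereas your bound $\E\,\Phi(\sigma |N|)\geq \P(|N|\geq 1)\,\Phi(\sigma)$ places the constant outside $\Phi$ and so needs neither convexity nor $\Delta_2$ at that point.
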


Next  we discuss some related results.  For  $p$-variation of 
Markov processes we refer to  Man\-stavi{\v{c}}ius~\cite{Man-Markov}, for $p$-variation of stable processes we refer to Xu~\cite{Xu}, and for results on the $p$-variation of the local time of  stable L\'evy processes in the
space variable we refer to Marcus and Rosen~\cite{Mar-Ros, Mar-Ros-pa-1}. A result of  Vervaat \cite{V}, states that if $X$ is a self-similar process with index $0<H\le 1$ and has moreover stationary increments, then its sample paths  have nowhere bounded variation,  unless $X(t)\equiv tX(0)$.  The weak variation of a class of  Gaussian processes is characterized in Xiao~\cite{Xi}, and 
  various results on the   moduli of continuity for  Gaussian random fields are obtained in  Meerschaert et al.~\cite{Meer}.    In a different direction,  the asymptotics of the renormalized quadratic variation of the Rosenblatt process  is analyzed in Tudor and Viens~\cite{TV} using Malliavin calculus.  
  
      \vskip 3pt 
  
The paper is organized as follows:     Theorems~\ref{thm-alpha}--\ref{cor-Int}  are  proved in Section~\ref{sec4}. Theorem  \ref{thm-fine-var} is proved in Section~\ref{sec3}.   We prove Theorem~\ref{t1}, Corollary~\ref{cor-int} and    Proposition~\ref{on-ness-con} in Section \ref{section4}.   Finally, some results used in the proofs are moved to the Appendix.

 \section{Proofs of Theorems~\ref{thm-alpha} and \ref{cor-Int}.}  \label{sec4}
 
 \subsection{Preliminaries.} \label{sec-pre}
Recall some basic facts. 
 Let $d$ be a pseudo-metric on $[0,1]$   with finite diameter $D$.
Moreover, let  $X=\{X(t)\!: t\in [0,1]\}$ be a    stochastic process with basic (complete) probability space $(\O,\A,\P)$. 
   We further say  that $X$ is  separable (for the usual distance on $[0,1]$), if  there exists a countable subset $\mathcal S$ of $[0,1]$ (separation set) and a null set $N$ of $\A$, such that for any $\o\in N^c$ and any $t\in [0,1]$, there is a sequence $\{t_n, n\ge 1\}\subset S$ verifying $t_n\to t$ and $X(t,\o)=\lim_{n\to \infty } X(t_n,\o)$.  
By \cite[Ch.~4.2, Theorem~1]{Gik-Sko}, every stochastic process  $X= \{X(t)\!: t\in [0,1]\}$  admits a separable version $\widetilde X$ with values in the extended reals.
%
 The measurability of the functional $\mathcal  V_\Phi(X)$ follows from the separability assumption on $X$.
For each continuous, strictly increasing function $f:\R_+\to \R$, $f^{-1}$ will  denote its  inverse. Recall  that  $\log^*(x)=\log (x+1)$, $\log_2^*(x)=\log^*(\log^* x)$ for all $x\geq 0$, and that   $\Phi: \R_+\to \R_+$ denotes   a continuous, strictly  increasing    function with  $\Phi(0)=0$ and $\lim_{t\to \infty} \Phi(t)=\infty$.

\vskip 2 pt

 The following  metric entropy result, Theorem~\ref{t2},   plays a key role in the proof of Theorem~\ref{thm-alpha} both to obtain good modulus of continuity estimates on $X$, but also to estimate certain probabilities.  It relies on the  covering numbers $N(T,d,\e)$, $\e>0$,  of a  pseudo-metric space $(T,d)$,  which are defined to be the  smallest number of open balls of radius $\e$ to cover $T$.      Note also that $\phi_\a^{-1}(x)=(\log^* x)^{1/\alpha}$,  $x\geq 0$. 
 
\begin{theorem}\label{t2}    Let $(T,d)$ be a pseudo-metric space  with diameter $D$,   and let $X=
\{X(t)\!: t\in T\}$ be a separable stochastic process   such that the increment condition \eqref{eq:673} is fulfilled for some $0<\alpha<\infty$ and with $[0,1]$ replaced by $T$. 
Further assume that  
\begin{eqnarray*}
\int_0^D   ( \log^* N(T,d,\e ))^{1/\a} \, \dd \e <\infty.  
\end{eqnarray*} 
 Let  $\d(\e)=  \int_0^{\e } (\log^* N(T,d,u))^{1/\a} \, \dd u$, $\e\in [0, D]$.   Then  there exists a finite constant $K_\a$ depending on $\a$ only such that
 \begin{eqnarray*} \Big\|\sup_{s,t\in T} \frac{|X(s)-X(t)|}{\d(d(s,t))}\Big\|_{\phi_\a} 
  &\le &   K_\a.\end{eqnarray*}  
 \end{theorem}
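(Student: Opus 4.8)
The plan is to derive this uniform modulus-of-continuity bound by the classical chaining method, adapted to the Orlicz/quasi-Orlicz functional $\|\cdot\|_{\phi_\a}$. The quantity $\d(\e)=\int_0^\e (\log^* N(T,d,u))^{1/\a}\,\dd u$ is precisely the Dudley-type entropy integral with $\phi_\a^{-1}(x)=(\log^* x)^{1/\a}$ playing the role of the inverse of the Young function, so the statement is the $\phi_\a$-norm version of the standard entropy bound on the oscillation of $X$.

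First I would set up a sequence of successively finer finite nets. For each integer $j\ge 0$ choose a minimal $2^{-j}D$-net $T_j\subset T$ realizing the covering number $N_j:=N(T,d,2^{-j}D)$, together with nearest-point maps $\pi_j:T\to T_j$. Fix $s,t\in T$; by separability it suffices to control $|X(s)-X(t)|$ over a countable dense set and pass to the supremum. Write the telescoping chaining decomposition $X(\pi_{j+1}(s))-X(\pi_j(s))$ down the chain anchored at $s$ and likewise at $t$, linking the two chains at a level $J=J(s,t)$ determined by $d(s,t)$ (roughly, the first level at which $s$ and $t$ may sit in a common ball). The two basic estimates I need are: (i) each single chaining increment $X(\pi_{j+1}(u))-X(\pi_j(u))$ has $\|\cdot\|_{\phi_\a}$-norm at most a constant times its $d$-distance, which is $\le C 2^{-j}D$ by \eqref{eq:673}; and (ii) the number of distinct such increments at level $j$ is at most $N_{j+1}N_j\le N_{j+1}^2$, so taking the supremum over these finitely many increments costs a factor governed by $\phi_\a^{-1}(N_{j+1}^2)=(\log^*N_{j+1}^2)^{1/\a}\le C(\log^* N_{j+1})^{1/\a}$ in the $\phi_\a$-norm. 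This is where I would invoke the standard maximal inequality in an Orlicz space: $\|\max_{k\le M} Y_k\|_{\phi_\a}\le c\,\phi_\a^{-1}(M)\max_k\|Y_k\|_{\phi_\a}$, valid up to the quasi-norm constant $K_\a$ in the range $0<\a<1$.

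Combining (i) and (ii), the level-$j$ contribution to $\bigl\|\sup_{u}|X(\pi_{j+1}(u))-X(\pi_j(u))|\bigr\|_{\phi_\a}$ is bounded by $C\,K_\a\,2^{-j}D\,(\log^* N_{j+1})^{1/\a}$. Summing over $j\ge 0$ and recognizing the sum $\sum_j 2^{-j}D\,(\log^* N_{j+1})^{1/\a}$ as a Riemann-type comparison to the integral $\int_0^D (\log^* N(T,d,u))^{1/\a}\,\dd u$ yields the global bound $C K_\a \,\d(D)$ for the full oscillation. To upgrade this to the pointwise-normalized supremum $\sup_{s,t}|X(s)-X(t)|/\d(d(s,t))$ rather than a crude $\d(D)$ bound, I would run the chaining locally: the contribution of levels $j\ge J(s,t)$ telescopes into the tail integral $\int_0^{d(s,t)}(\log^* N)^{1/\a}$, i.e.\ into $\d(d(s,t))$ itself, so dividing by $\d(d(s,t))$ and taking the supremum over $s,t$ keeps the constant uniform. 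Quasi-subadditivity of $\|\cdot\|_{\phi_\a}$ (with constant $K_\a=2^{1/\a}$) lets the series of normed increments be summed, the geometric decay of $2^{-j}$ absorbing the constant into a finite multiple of $K_\a$.

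The main obstacle I anticipate is the non-convex regime $0<\a<1$, where $\phi_\a$ fails to be a genuine Young function and $\|\cdot\|_{\phi_\a}$ is only a quasi-norm. The triangle inequality is replaced by the weaker bound with constant $K_\a$, so the naive telescoping sum must be handled carefully: iterating quasi-subadditivity over infinitely many terms can blow up unless the summands decay geometrically fast enough to beat the accumulating $K_\a$ factors. I would control this either by grouping the chain dyadically and using the inequality $\|\sum_j Y_j\|_{\phi_\a}^{\a}\le \sum_j \|Y_j\|_{\phi_\a}^{\a}$ (the $\a$-power form of the quasi-triangle inequality valid for $0<\a<1$), which is additive and hence immune to constant accumulation, and then taking $\a$-th roots. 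The maximal inequality in (ii) must similarly be established directly from \eqref{eq:673} via a union bound on tail probabilities $\P(|Y_k|>\l)\le 2e^{-(\l/\|Y_k\|_{\phi_\a})^{\a}}$ and optimization in $\l$, rather than by appeal to convex-duality arguments that presuppose $\a\ge 1$.
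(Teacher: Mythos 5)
Your set-up (dyadic nets, nearest-point chaining, the increment bound from \eqref{eq:673}, and a maximal inequality in $\|\cdot\|_{\phi_\a}$ for finitely many variables) coincides with the paper's proof in Appendix~\ref{appendix-1}, and your observation that $\|U+V\|_{\phi_\a}^{\a}\le \|U\|_{\phi_\a}^{\a}+\|V\|_{\phi_\a}^{\a}$ for $0<\a<1$ is a legitimate way to avoid accumulating quasi-norm constants. The gap is at the decisive last step. Running the chaining ``locally'' gives, for each fixed dyadic scale $k$, the bound $\bigl\|\sup_{d(s,t)\le 2^{-k}D}|X(s)-X(t)|\bigr\|_{\phi_\a}\le C\,\d(2^{-k}D)$, i.e.\ a $\phi_\a$-norm bound, uniform in $k$, scale by scale. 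But the theorem asserts a bound on the $\phi_\a$-norm of $\sup_{s,t}|X(s)-X(t)|/\d(d(s,t))$, which is a supremum over \emph{infinitely many} scales simultaneously, and a supremum of countably many random variables each of $\phi_\a$-norm at most $C$ need not belong to $L^{\phi_\a}$ at all (its norm can even be infinite). So the sentence ``dividing by $\d(d(s,t))$ and taking the supremum over $s,t$ keeps the constant uniform'' is exactly the assertion that has to be proved, and no summation of norms, whether via $K_\a$-quasi-subadditivity or via the $\a$-power inequality, can produce it: the Orlicz (quasi-)norm does not commute with countable suprema, and the normalized per-scale contributions have norms of constant order, leaving no decay to sum.

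The paper closes precisely this gap by abandoning norms in favour of tail probabilities at the last step, with three ingredients. First, the covering numbers are enlarged to $N(\e)=\max(N(T,d,\e),D/\e)$, so that $\log N(\e_n)\ge n\log 2$; with $v_n=12\,\e_n(\log N(\e_n))^{1/\a}$ and $\xi_n$ the level-$n$ supremum of chaining increments, a union bound over the finitely many net pairs plus Markov's inequality gives $\P\{\xi_n>t\,v_n\}\le 2^{-nt^{\a}/2+2}$ for $t\ge 8^{1/\a}$, a decay that is geometric \emph{in the level} $n$, not merely in $t$. Second, the supremum over scales is reduced to a single union bound through the elementary inclusion: if $\xi_j\le t\,v_j$ for every $j$, then $\sum_{n\ge k}\xi_n\le t\sum_{n\ge k}v_n$ for every $k$; hence
\begin{equation*}
\P\Big\{\sup_{k}\frac{\sum_{n\ge k}\xi_n}{\sum_{n\ge k}v_n}>t\Big\}\le\sum_{j}\P\{\xi_j>t\,v_j\}\le 2\sum_{j}2^{-jt^{\a}/2}.
\end{equation*}
Third, this tail bound is integrated against $e^{\g t^\a}$ to conclude that $\Theta=\sup_{s,t}|X(s)-X(t)|/\d(d(s,t))$ lies in $L^{\phi_\a}$ with a universal norm bound, after concavity of $\d$ is used to absorb $\sum_{n\ge k}v_n$ into $96\,\d(d(s,t))$. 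Your final paragraph does invoke tail bounds, but only to prove the finite maximal inequality; it is the across-scales union bound with level-geometric decay, and the recovery of the $\phi_\a$-norm by integrating the tail, that your proposal lacks. As written, your argument establishes the classical global estimate of Remark~\ref{r2}(ii), not Theorem~\ref{t2}.
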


 \begin{remarks} \label{r2}   (i): It is possible to deduce Theorem~\ref{t2} from 
 Kwapie\'n and Rosi\'nski~\cite{Kwa-Ros}, Corollary~2.2 and Remark~1.3(2), by  modifying the functions $\phi_\a$ in a suitable way in the case $\a<1$  to obtain a Young function.  The result of \cite{Kwa-Ros} relies on the  majorizing measure method, and hence does also apply under the metric entropy integral condition  by a well known existence result.  For the reader's convenience we give a direct proof of Theorem~\ref{t2} using  metric entropy methods  and which holds for any   $\a>0$ in Appendix~\ref{appendix-1}.

 \vskip 1 pt  \noi  (ii): Theorem \ref{t2} 
   implies   that 
  \begin{eqnarray*} 
 \big\|\sup_{s,t\in T} {|X(s)-X(t)|} \, \big\|_{\phi_\a} 
  &\le &  K_\a \int_0^{D }(\log^*  N(T,d,\e))^{1/\a}\,  \dd \e, 
  \end{eqnarray*}  
a   classical result. And at this regard,  it  is a natural complement of this one. 
\end{remarks} 

\subsection{Proof of Theorem~\ref{thm-alpha}}  \label{sec4-1}
  Put  for $0\le t\le 1$,
  \begin{equation*}\label{def-of-F}
F(t)=\sup_{0=t_0<\dots<t_n=t\atop n\in \N} \sum_{i=1}^n  \Psi\big( d(t_i,t_{i-1})\big).
\end{equation*}
The basic observation is that if   $  s\le t $, then $ \Psi\big( d(s,t )\big)\le F(t) -F(s)$.  
Hence, 
 \begin{equation}\label{base1}
d(t,s)\le     \Psi^{-1}\big(F(t)-F(s)\big) \qquad\quad 0\le s\le t\le 1,\end{equation}
and in particular $D\le \Psi^{-1}(F(1)) $ (notice that $F(1)= \mathcal V(\Psi, d)$). Introduce for all $m\in \Z$ and 
$j=1,\ldots, \lceil F(1) 2^m\rceil$, the sets 
  $$I_{m,j}=](j-1)2^{-m}, j2^{-m}], \qq\quad S_{m,j} =\{s\in [0,1]:   F(s)\in I_{m,j} \} .$$
 Put  also 
 \begin{equation*}
 Z_m      =   \sharp\big\{j=1,\dots,\lceil F(1) 2^m\rceil: \sup_{s,t\in S_{m,j}} \Phi(|X(t )-X(s )|)>C 2^{-m}\big\},
 \end{equation*}
where $\sharp A$ denotes the number of elements in a set $A$, and $\lceil x\rceil$ denotes the least integer larger than  $x$. 
 We have $d(s,t)\le \Psi^{-1}(2^{-m})$ for all $s,t$ such that $F(s), F(t)\in I_{m,j}$. Let $0<\rho<   2^{-m}/2$.  
Consider a subdivision of $I_{m,j}$   of size $ 2\rho$, thus of 
length $N= N(\rho)$  bounded by
$ \big[  2^{-m}/\rho\big] +1 \le  2^{-m+1}/\rho$. It induces a partition $U_1, \ldots, U_{N(\rho)}$ of $S_{m,j}$. 
 For any non-empty element $U$ from this partition,   any $s,t\in U$, we have $\Psi^{-1}(|s-t|)\le \Psi^{-1}(2\rho)$. Pick $u$ arbitrarily  in  $U$, we see that the $d$-ball centered at $u$ and  of radius  $ \Psi^{-1}(2\rho) $, contains $U$. Moreover, repeating this operation for
each non-empty element $U_1, \ldots, U_{N(\rho)}$, we manufacture like this a  covering of $S_{m,j}$   by   
$d$-balls of radius  $ \Psi^{-1}(2\rho) $, centered in $S_{m,j}$ of size at most $N $. Thus    
 $N( S_{m,j},d, \Psi^{-1}(2\rho)) \le   2^{-m+1}/\rho $. 
     Letting $\rho = \Psi(\e)/2$, we deduce 
\begin{equation}\label{entro}
N( S_{m,j},d, \e)  \le   \frac{ 2^{-m+2}}{\Psi(\e)}  , \qq \quad  0<\e <\Psi^{-1}(2^{-m}) .
\end{equation}  
 In particular, for $m=0$ and by considering $\tilde F(t) = F(t)/F(1)$ and $\tilde \Psi(t)=\Psi(t)/F(1)$ instead of $F$ and $\Psi$ we obtain that 
  \begin{equation}\label{entro-2}
N( [0,1],d, \e)  \le   \frac{ 4 F(1) }{\Psi(\e)}  , \qq \quad 0<\e<\Psi^{-1}(F(1)).
\end{equation}

By Remark~\ref {r2}(ii) and \eqref{entro},   
\begin{eqnarray*} \big\|\sup_{s,t\in S_{m,j}} {|X(s)-X(t)|}\, \big\|_{\phi_\a} 
 &\le &   K_\a \int_0^{\Psi^{-1}(2^{-m}) }\Big(\log^* \Big(\frac{2^{-m+2}}{\Psi(\e)}\Big) \Big)^{1/\a} \dd \e\leq K_\alpha 4^{1/\a} y_m 
 \end{eqnarray*}  
 where we use the estimate 
 \begin{equation}\label{est-log}
  \log^*( y x)\leq (y\vee 1) \log^*(x)\qquad \text{for all }y,x\geq 0.
 \end{equation}
With $C$ given as in the theorem and  $x_m$ defined in \eqref{con-1-thm1-alpha} we have
   \begin{eqnarray*}
& &\P\Big\{\sup_{s,t\in S_{m,j}}\Phi( |X(s)-X(t)|)> C2^{-m}\Big\} 
 \cr  
&\le & \P\Big\{\sup_{s,t\in S_{m,j}}  |X(s)-X(t)| > x_m\, \big\|\sup_{s,t\in S_{m,j}} {|X(s)-X(t)|} \,\big\|_{\phi_\a}\Big\}
\leq  \frac{1}{\phi_\a(x_m)}. \label{lsfjsfl}
\end{eqnarray*}
  And this holds for $ j=1,\ldots,  \lceil F(1) 2^m\rceil$.  By letting  $r_0=\phi_{\alpha}^{-1}(1)$ and $\tilde x_m=x_m\vee r_0$  we have  
  \begin{equation}\label{eq:942}
  \P\Big\{\sup_{s,t\in S_{m,j}}\Phi( |X(s)-X(t)|)> C2^{-m}\Big\} \leq \frac{1}{\phi_\a(\tilde x_m)} 
  \end{equation}
  since the left-hand side of \eqref{eq:942} is always  less than or equal to one. 
  
 \vskip 1pt
 Fix now a  partition $\pi=\{0=t_0<\dots<t_n=1\}$ of $[0,1]$.
Let $m_0\in\Z$ be any number satisfying $\max_{1\leq i\leq n} F(t_i)-F(t_{i-1})\leq 2^{-m_0}$. 
 Introduce the following sets
\begin{eqnarray}
\Lambda_m &= &\Big\{  1\le i\le n:  2^{-m-1} <F(t_i)-F(t_{i-1})\leq 2^{-m}\Big\},\qquad m\geq m_0,
\cr E(\omega)&=&\Big\{  1\le i\le n:  \Phi\big( |X(t_i,\omega)-X(t_{i-1},\omega)|\big)>  C [F(t_i)-F(t_{i-1})]\Big\}. \label{def-of-E-omega}
\end{eqnarray}
 We have
  \begin{align}\notag
{}& \sum_{i=1}^{n} 
\Phi\big( |X(t_i,\omega)-X(t_{i-1},\omega)|\big)  =\sum_{i\in E(\omega)} 
\Phi\big( |X(t_i,\omega)-X(t_{i-1},\omega)|\big)
\\ \label{dep} {}& \quad +\sum_{i\notin E(\omega)} \Phi\big( |X(t_i,\omega)-X(t_{i-1},\omega)|\big) \le  \sum_{i\in E(\omega)} \Phi
\big( |X(t_i,\omega)-X(t_{i-1},\omega)|\big)+ C F(1).  
\end{align}
 In order  to control the remainding subsum, 
 we start with the following simple bound, 
 \begin{align}\notag
{}&  \sum_{i\in E(\omega)} \Phi\big( |X(t_i,\omega)-X(t_{i-1},\omega)|\big)=\sum_{m=m_0}^\infty \Big(\sum_{i\in E(\omega)\cap \Lambda_m}
\Phi\big( |X(t_i,\omega)-X(t_{i-1},\omega)|\big) \Big)
 \\ \label{est-E-2} {}& \qquad \le  \sum_{m=m_0}^\infty\Big(\sup_{i\in E(\omega)\cap \Lambda_m}\Phi\big( |X(t_i,\omega)-X(t_{i-1},\omega)|\big)\Big)\sharp \big(E(\omega)\cap
\Lambda_m\big).
 \end{align}

We now need some auxiliary estimates. 
We first claim that 
\begin{equation}\label{sdlfjsa}
\sharp \big(E(\omega)\cap \Lambda_m\big)\leq 9 Z_{m-2}(\omega)\qquad \text{for all }m\geq m_0.
\end{equation}
  For  $i\in E(\omega)\cap \Lambda_m$ choose an  integer  $j=1,\dots, \ldots, \lceil F(1) 2^m\rceil$  such that $F(t_i)\in I_{m,j}$.   Then $F(t_{i-1})\in I_{m,j-1}\cup I_{m,j}=J$, say.
For some   $k=1,\dots, \lceil F(1) 2^{m-2}\rceil$, we have $J\subseteq I_{m-2,k}$ 
which by definition of $\Lambda_m$ and $E(\omega)$ implies that 
\begin{equation}\label{eq2}
\sup_{s,t\in S_{m-2,k}} \Phi(|X(t,\omega)-X(s,\omega)|)>C (F(t_i)-F(t_{i-1}))\geq   C 2^{-(m-2)}.
\end{equation}
On the other hand, for each $k=1,\dots, \lceil F(1) 2^{m-2}\rceil$ we have 
\begin{equation}\label{eq1}
\sharp\{i\in E(\omega)\cap \Lambda_m:F(t_i)\in I_{m-2,k}\}\leq 9.
\end{equation}
 Indeed, to show \eqref{eq1} let $i_1$ and $i_0$ be the largest and least integers in  $A_k:=\{i\in E(\omega)\cap 
\Lambda_m:F(t_i)\in I_{m-2,k}\}$. Since $F(t_{i_0}),F(t_{i_1})\in I_{m-2,k}$, by definition, we have   
 \begin{eqnarray*}
 2^{-(m-2)}&\ge& F(t_{i_1})-F(t_{i_0}) =  \sum_{i\in A_k:\, i>i_0} [F(t_i)-F(t_{i-1})]\cr &\ge&   \sum_{i\in A_k:\, i>i_0} 2^{-m-1}\geq  2^{-m-1}\big(\sharp A_k-1\big),
 \end{eqnarray*}
which shows \eqref{eq1}. 
 Combining \eqref{eq2} and \eqref{eq1} we obtain \eqref{sdlfjsa}.  
By \eqref{est-E-2} and \eqref{sdlfjsa},   
 \begin{eqnarray*}
     \sum_{i\in E(\omega)} \Phi\big( |X(t_i,\omega)-X(t_{i-1},\omega)|\big)  
     & \le &        9\sum_{m= m_0}^\infty  Z_{m-2}(\omega)
  \sup_{i\in E(\omega)\cap \Lambda_m}\Phi\big( |X(t_i,\omega)-X(t_{i-1},\omega)|\big).
  \end{eqnarray*}  
 
  Besides, by Theorem~\ref{t2}   there exists a finite constant $K_\a>0$ depending on
   $\alpha$ only such that if 
 \begin{eqnarray}\label{theta}\Theta= \sup_{s,t\in [0,1]} \frac{|X(s)-X(t)|}{\d(d(s,t))} ,
   \end{eqnarray} then 
$\|\Theta\|_{\phi_\a}\le K_\a$ ($\delta$ is defined in Theorem~\ref{t2} with $T=[0,1]$).  
 Now, if $i\in E(\omega)\cap \Lambda_m$, we have by \eqref{base1},   $   d(t_i, t_{i-1} ) \le \Psi^{-1}\big(F(t_{i }) -F(t_{i-1})\big)\le
\Psi^{-1}( 2^{-m})$,
 and  hence 
  \begin{eqnarray*}\Phi\big( |X(t_i,\omega)-X(t_{i-1},\omega)|\big)&\le & 
\Phi\big( \Theta(\omega)\d\big(d(t_i, t_{i-1} )\big)\big)\le \Phi\big( \Theta(\omega) \d\circ \Psi^{-1}( 2^{-m})\big).   
\end{eqnarray*}  
By \eqref{entro-2} and \eqref{est-log} we have with $c_d= [(4F(1))\vee 1]^{1/\a}$ that 
\begin{align*}
\delta\circ \Psi(2^{-m})\leq  {}& \int_0^{\Psi(2^{-m})} \Big(\log^*\Big(\frac{ 4 F(1) } {\Psi(\e)}\Big)\Big)^{1/\a} \,\dd \e 
\\ \leq {}& c_d  \int_0^{\Psi(2^{-m})} \Big(\log^*\Big(\frac{ 1 } {\Psi(\e)}\Big)\Big)^{1/\a} \,\dd \e = c_d y_m, 
\end{align*}
and hence  
\begin{equation}  \label{sdhkfs}
    \sum_{i\in E(\omega)} \Phi\big( |X(t_i,\omega)-X(t_{i-1},\omega)|\big)   
      \le     9 \sum_{m=m_0}^\infty  \Phi\big( \Theta(\omega) c_d   y_m\big)
  Z_{m-2}(\omega).
  \end{equation}
In the following we will show that the right-hand side of \eqref{sdhkfs} is finite almost surely.
 For all $\epsilon>0$ choose $M$ such that $\P(\Theta\leq M)\geq 1-\epsilon$. For  $\o\in \{\Theta\leq M\}$, 
  \begin{equation*}
   \sum_{m=m_0}^\infty  \Phi\big( \Theta(\omega) c_d y_m\big)
  Z_{m-2}(\omega)  \le \sum_{m=m_0}^\infty  \Phi\big( M c_d y_m \big)
  Z_{m-2}(\omega) =:    \mathcal Z_M(\o). 
  \end{equation*}
Suppose that  $m_0$ is the greatest  integer such that $F(1)\leq 2^{-m_0}$, 
  and hence for all $m\geq m_0$   we have that $F(1)2^m\geq 1/2$, which implies that $\lceil F(1)2^m\rceil\leq 2 F(1) 2^m$.  
    By definition of $Z_m$ and \eqref{eq:942} we have for all $m\geq m_0$, 
  \begin{eqnarray}\label{zm1}
  \E Z_m&= & \sum_{j=1}^{\lceil F(1) 2^m\rceil }\P\Big\{\sup_{s,t\in S_{m,j}}\Phi( |X(s)-X(t)|)> C 2^{-m}\Big\} \cr &\leq &  \lceil F(1)
2^m\rceil \frac{1}{\phi(\tilde x_m)}\leq 2 F(1) 2^m \frac{1}{\phi_\a(\tilde x_m) }\leq 4 F(1) 2^m e^{-x_m^\alpha}
  \end{eqnarray}
  where we have used that $1/\phi_\a(\tilde x_m)\leq 2 e^{-x_m^{\alpha}}$ in the last inequality. Hence
  \begin{equation*}\label{zm2}
    \E \mathcal Z_M  =  \sum_{m=m_0}^\infty  \Phi\big( M c_d  y_m\big)
 \E   Z_{m-2}  
\le   4F(1) \sum_{m=m_0}^\infty    2^{m}\Phi\big( M  c_d y_m \big)   e^{-x_m^\a}<\infty
  \end{equation*}
  by assumption \eqref{con-1-thm1-alpha}. 
  That is,
 the random variable in  \eqref{sdhkfs} is finite with probability greater than $1-\epsilon$ for all $\epsilon>0$  and hence finite
almost surely.  By \eqref{dep}
\begin{equation}  \label{eq:2424}
\sum_{i=1}^n \Phi\big( |X(t_i,\omega)-X(t_{i-1},\omega)|\big)     \le   9 \sum_{m=m_0}^\infty  \Phi\big( \Theta(\omega) c_d y_m \big) Z_{m-2}(\omega)   +C F(1).   
\end{equation} 
This inequality holds for all $\o$ in a measurable set $\O_0$ of probability 1 and all partitions
$\pi$.   Note also that the right-hand side   {\it does not} depend
on the given partition
$\pi$; it only depends on the process $X$. So that   we can take  supremum over all partitions
$\pi$ on  the left-hand side. And recalling in view of the separability assumption that $\mathcal   V_\Phi(X  )$ is measurable, we have shown 
that for   all
$\o\in \O_0$,
\begin{equation*} \label{dep2-2}
  \mathcal   V_\Phi(X(\cdot,\omega))     \le    9 
  \sum_{m=m_0}^\infty  \Phi\big( \Theta(\omega) c_d y_m \big) Z_{m-2}(\omega)
+C F(1), 
 \end{equation*} 
which  shows  that $X$ has sample paths of bounded $\Phi$-variation almost surely, and completes the proof. \qed

  \begin{remark}\label{entropy} 
  Suppose that $\mathcal V(\Psi,d)<\infty$ with $\Psi(x) = x^p$, and 
 let  $\d(\e)$ be defined as in Theorem~\ref{t2} with $T= [0,1]$.  
  Then the   estimate \eqref{entro-2} shows that 
$$\d(\e)\le   \int_0^{\e } \big(\log^* \frac{4 }{\e^p}\big)^{1/\a}\,  \dd u\le C_{p,\a} \ \e \big(\log^* \frac{1 }{\e }\big)^{1/\a} .$$
Specify this for $X$ Gaussian  ($\a=2$). We deduce from Theorem \ref{t2}, that 
 \begin{eqnarray*} \Big\|\sup_{s,t\in T} \frac{|X(s)-X(t)|}{ d(s,t) (\log^* \frac{1 }{d(s,t) })^{1/2}}\Big\|_{\phi_2} 
  &\le &   K ,
  \end{eqnarray*}  
 which justifies the implication \!\eqref{eq:discussJaMo} $\Rightarrow$ \eqref{eq23} invoked in the Introduction. 
Furthermore, the above estimates remain valid under the conditions considered in Theorem~\ref{cor-Int}\eqref{Int-item-2}.  
 \end{remark}
 
\begin{remark} The following example provides a good illustration of Jain and Monrad result (see Introduction). Taking  
$d(s,t)=d_1(|s-t|)$ where
$d_1(u)\leq  u^{1/p}\log^*_2(1/u)^{-1/2}$ and $\Psi(t)=t^p(\log_2^*(1/t))^{p/2}$ for  $t>0$, we see that  $\mathcal V(\Psi,d)<\infty$, so that $X$ has sample paths of bounded $p$-variation almost surely.  Further, from Remark~\ref{entropy} 
$$| X(t,\omega)-X(s,\omega)| \le  C(\omega)  \frac{|s-t|^{1/p}\ (\log^* \frac{1}{|s-t|} )^{1/2}}{ (\log^*_2 \frac{1}{|s-t|})^{ 1/2}}    \qquad \forall 0\le s,t\le 1.
$$ 
thereby implying that  has sample paths of bounded $\Psi_1$-variation almost surely, 
where $\Psi_1(x)= |s-t|^{p}  (\log^*_2 \frac{1}{|s-t|} )^{-p/2} ( \log^*\frac{1}{|s-t|})^{ p/2} $, which is clearly weaker than bounded $p$-variation.  Thus, we can not obtain bounded  $p$-variation  only from the modulus of continuity of $X$. \end{remark}

%
%
%

Next we will show  Theorem~\ref{cor-Int} by suitable applications of   Theorem~\ref{thm-alpha}.

\subsection{Proof  of Theorem~\ref{cor-Int}}

Cases \eqref{Int-item-1}--\eqref{Int-item-4} follow from Theorem~\ref{thm-alpha}, once we have shown that the sum  \eqref{con-1-thm1-alpha} is finite. The
two sequences $(x_m)_{m\geq 1}$ and $(y_m)_{m\geq 1}$ are defined  in \eqref{con-1-thm1-alpha-2}. In the following $K$ with subscript will denote  a finite
constant  only depending on the subscript, but might vary throughout the proof.  Given two sequences $(a_m)_{m\geq 1}$ and $(b_m)_{m\geq 1}$,  we  write   $a_m \sim b_m$ as
$m\to \infty$ if $a_m/b_m\to 1$ as $m\to \infty$, and similar for functions.   
  
\vskip 2 pt
\noindent
 \emph{Case~\eqref{Int-item-1}}: Set  $\Psi(t)=t^p$ and $\Phi(t)=t^p(\log_2^*(1/t))^{-p/\alpha}$ for all $t>0$ and $\Phi(0)=\Psi(0)=0$.  Since      $\Psi^{-1}(t)=t^{1/p}$ we have that
 \begin{equation*}
 y_m= \int_0^{2^{-m}} \Big(\log^*\Big(\frac{1}{\e}\Big)\Big)^{1/\a}\,\Psi^{-1}(\dd \e) = 
(1/p)\int_0^{2^{-m}} \Big(\log^*\Big(\frac{1}{\e}\Big)\Big)^{1/\a} \e^{1/p-1}\,\dd \e \sim (1/p) m^{1/\a} 2^{-m/p}
 \end{equation*}
 as $m\to\infty$ since  the integrand is regularly varying. 
 For all $m\geq 1$, we have by substitution that 
 \begin{equation*}
 \int_0^{2^{-m}} \Big(\log^*\Big(\frac{2^{-m}}{\e}\Big)\Big)^{1/\a} \,\Psi^{-1}(\dd \e)= K_{\a,p}  2^{-m/p}. 
 \end{equation*}
We note that   $\Phi^{-1}(t)\sim t^{1/p} (\log_2^*(1/t))^{1/\a}$ as $t\to 0$, and hence, for all  $C>0$  and all $m$ larger than some $m_0\geq 1$
\begin{equation*}
 x_m=\frac{\Phi^{-1}(C 2^{-m})}{K_\a\int_0^{2^{-m}} (\log^*(\frac{2^{-m}}{\e}))^{1/\a} \,\Psi^{-1}(\dd \e)}\geq  
 \frac{ K_{\a,p}C^{1/p} 2^{-m/p}\log(m)^{1/\a}}{2^{-m/p}}= K_{\a,p} C^{1/p}\log(m)^{1/\a}. 
 \end{equation*}
For all $M>0$,
 \begin{align*}
   \sum_{m=m_0}^\infty 2^m \Phi(M y_m) e^{-x_m^\a}&\leq 
 M^{p} K_{\a,p} \sum_{m=m_0}^\infty 2^m 
  2^{-m } m^{p/\a}   e^{- K_{\a,p}^\alpha C^{\a/p} \log m}  \\ 
&   =  
  M^{p}K_{\a,p}    \sum_{m=m_0}^\infty  m^{p/\a-K_{\a,p}^\alpha C^{\a/p}}  <\infty
 \end{align*}
 for  $C= (p/\alpha +2)^{p/\alpha} K_{\alpha, p}^{-p}$. This shows \eqref{con-1-thm1-alpha} and completes the proof of \eqref{Int-item-1}.
 
  \vskip 3 pt
 \noindent\emph{Case~\eqref{Int-item-2}}: The proof of  this case  is  analogous with 
the one of   case \eqref{Int-item-1}. We set $\Phi(t)=t^p$ and $\Psi(t)=  {t^{ p}}{(\log^*_2(1/t))^{p/\alpha}}  $.   As $t\to 0$ we have $\Psi^{-1}(t)\sim t^{1/p}/ (\log_2^*(1/t))^{ 1/\a}$ and  $(\Psi^{-1})'(t)\sim (1/p) t^{1/p-1} /(\log_2^*(1/t))^{1/\a}$. 
Hence 
\begin{align*}
y_m = \int_0^{2^{-m}} \log^*\Big(\frac{1}{\e}\Big)^{1/\a}\,(\Psi^{-1})'(\e)\,\dd \e\leq K_{\a,p} 2^{-m/p} m^{1/p},
\end{align*}
and therefore
$$ \Phi(My_m)\le M^p K_{\a, p}  2^{-m}m^{p/\a}   $$ for all $m$ large enough.   
Further, $x_m\geq K_{\a, p}  C^{1/p} (\log m)^{1/\a}$ for all $m$ large enough. Indeed,   
 \begin{eqnarray*} 
 \int_0^{2^{-m}} \Big(\log^*\Big( \frac{2^{-m}}{\e}\Big)\Big)^{1/\a} \,\Psi^{-1}(\dd \e)&=&   2^{-m}\int_0^{  1 }\Big( \log^*
 \frac{1}{v}\Big)^{1/\a}(\Psi^{-1})'(2^{-m}v) \,\dd v\cr &\leq & K_{\a, p}  2^{-m}\int_0^{  1 }\Big( \log
 \frac{1}{v}\Big)^{1/\a} (2^{-m}v)^{\frac{1}{p}-1}\frac{\dd v}{(\log_2^*\frac{2^m}{v})^{1/\a}}\cr &\le  &K_{\a, p} 
 \frac{2^{-m/p}}{(\log m )^{1/\a}}\int_0^{  1 }\big( \log
 \frac{1}{v}\big)^{1/\a}  v^{\frac{1}{p}-1}\,\dd v \cr &\le  &K_{\a, p} 
 \frac{2^{-m/p}}{(\log m )^{1/\a}}.
 \end{eqnarray*}
  Thus
$$x_m=\frac{ \Phi ^{-1}( C 2^{-m})}{K_\a\int_0^{2^{-m}}\big( \log^*( \frac{ 2^{-m}}{\e})\big)^{1/\a} \,\Psi^{-1}(\dd \e)}
 \ge K_{\a, p} C^{1/p} \frac{    2^{-m/p} }{    \frac{2^{-m/p}}{(\log m )^{1/\a}}}
= K_{\a, p} C^{1/p}(\log m )^{1/\a} $$
for $m$ large enough. 
  Hence \eqref{con-1-thm1-alpha}  follows by setting  $C= (p/\alpha +2)^{p/\alpha} K_{\a, p}^{-p}$. 
 \vskip 2 pt
 \noindent
 \emph{Case~\eqref{Int-item-3}}: Let $\Phi_\b(t)=e^{-t^{-1/\b}}$ and note that $\Phi^{-1}_\b(t)=(\log(1/t))^{-\b}$,    $t>0$.   Let  $\b_0>1/\a$ and set  $\Psi=\Phi_{\b_0}$, $\Phi=\Phi_\b$. 
For all $m\geq 1$, 
  \begin{align*}
  y_m\leq y_1= {}&  \int_0^{1} \Big(\log^*\Big(\frac{1}{\e}\Big)\Big)^{1/\a}\,\Psi^{-1}(\dd \e) \\ = {}& \int_0^{\Psi^{-1}(1)} \Big(\log^*\Big(\frac{1}{\Psi(\e)}\Big)\Big)^{1/\a}\, \dd \e\leq K_{\alpha,\beta_0}  \int_0^{\Psi^{-1}(1)} \e^{-1/(\beta_0\alpha )}\, \dd \e<\infty. 
  \end{align*}
%
 Moreover, 
 \begin{align*}
{}& \int_0^{2^{-m}} \Big(\log^*\Big(\frac{2^{-m}}{\e}\Big)\Big)^{1/\a} \,\Psi^{-1}(\dd \e)= 
 \int_0^{2^{-m}} \Big(\log^*\Big(\frac{2^{-m}}{\e}\Big)\Big)^{1/\a} \Big(\log\Big(\frac{1}{\e}\Big)\Big)^{-1-\b_0}\e^{-1}\,\dd \e 
 \\ = {}&  \int_0^{1} \Big(\log^*\Big(\frac{4}{\e}\Big)\Big)^{1/\a} \Big(\log\Big(\frac{2^m}{\e}\Big)\Big)^{-1-\b_0}\e^{-1}\,\dd \e \leq 
K_{\alpha,\b_0} m^{-1-\b_0}.
 \end{align*}
Set $C=1$.  Since $\Phi^{-1}(C 2^{-m})\geq m^{-\b }$ we have that  $x_m\geq m^{1+\b_0-\b}$, and hence  
 \begin{align}\label{lastineq3}
 \sum_{m=1}^\infty 2^m \Phi(M y_m) e^{-x_m^\a}\leq \Phi(M y_1)\sum_{m=1}^\infty \exp\Big(m \log(2) 
  -m^{\alpha(1+\b_0-\b)}\Big) 
<\infty
 \end{align}
since $\a(1+\b_0-\b)>1$.  
  
 \vskip 2 pt
 \noindent
\emph{Case~\eqref{Int-item-4}}: Recall that  $\Phi_{c,r}(t)=e^{-r(\log \frac{1}{t})^c}$ and note that $\Phi_{c,r}^{-1}=\Phi_{\frac{1}{c},r'}$ with $r'=r^{-1/c}$.  
We set $\Psi=\Phi_{c,r}$ and $\Phi=\Phi_{c,v}$ where $v>r$.  
Since $\e\mapsto (\log(1/\e))^{c/\a}$ is slowly varying at zero we have as $n\to \infty$ 
\begin{align*}
y_m={}&  \int_0^{2^{-m}} \Big(\log^*\Big(\frac{1}{\e}\Big)\Big)^{1/\a}\,\Psi^{-1}(\dd \e)
= \int_0^{\Psi^{-1}(2^{-m})} \Big(\log^*\Big(\frac{1}{\Psi(\e)}\Big)\Big)^{1/\a}\,\dd \e \\ = {}& \int_0^{\Phi_{\frac{1}{c},r'}(2^{-m})} \Big(\log\Big(\frac{1}{\e}\Big)\Big)^{\frac{c}{\a}}\,\dd \e\\ 
\sim {}&  \Phi_{\frac{1}{c},r'}(2^{-m})\Big(\log\Big(\frac{1}{\Phi_{\frac{1}{c},r'}(2^{-m})}\Big)\Big)^{\frac{c}{\a}}= (r')^\frac{1}{\a} e^{-r'(m\log 2)^{1/c}} (m\log 2)^{\frac{1}{\a}}.
\end{align*}
    Set $C=1$. Further, as we always have that $x_m\ge  K_\a \Phi^{-1}(C2^{-m})y_m^{-1}$, it follows that
\begin{align*}
x_m\ge  {}&   \frac{1}{2}K_\a \exp\Big[ (r'-v')(m\log 2)^{1/c}\Big] (m\log 2)^{\frac{-1}{\a}}
 \geq 
\exp(\delta m^{1/c})
\end{align*}
for all $m\geq m_0$, for some $m_0\geq 1$, where $\delta=(r'-v')(\log 2)^{1/c}/2$. 
For all $M>0$  we have 
\begin{align*}
 \sum_{m=m_0}^\infty 2^{m}\Phi(M y_m)e^{-x_m^\a}\leq \Phi(M y_1) \sum_{m=m_0}^\infty \exp\Big[ m\log(2)-\exp\Big(\alpha \delta m^{1/c}\Big)\Big]<\infty. 
\end{align*}
\qed

 \begin{remarks} \item 1. In the two first cases of the proof of Theorem~\ref{cor-Int} the     series  in \eqref{con-1-thm1-alpha} converges  slowly,  whereas  in the two last cases,  the above series converges fastly.
 \item 2.  Concerning Case 3,   we observe from \eqref{lastineq3}  that by the assumption made   on $\b$, namely   $\b<1+ \b_0-1/\a$, the convergence of
the series  in
\eqref{con-1-thm1-alpha} is controlled by the sole sequence $(x_m)$, the fact that $(y_m)$ be bounded indeed suffices. If $\b<  \b_0-1/\a$, then 
the convergence of the above  series  
in
\eqref{con-1-thm1-alpha} is already granted by the sequence $(y_m)$. 
\item 3.  Concerning Case 4, let
$0<r''<r'$. Notice, although not used here that   $y_m\le K e^{-r''(m\log 2)^{1/c}}$, and so  
$$ \Phi(M y_m)\le \exp\Big\{ -v\Big(\log \frac{1}{MKe^{-r''(m\log 2)^{1/c}}}\Big)^c \Big\} \le K'\exp\Big\{ -v (r'')^c(m\log 2)  \Big\} , $$ 
where $K,K'$ depend on $r',r'', c, M$. And so  when $0<r<1$, $\Phi(M y_m)  \le K''2^{-\rho m}$ for suitable constants $K''$ and $\rho >1$. Consequently, $
\sum_{m=m_0}^\infty 2^{m}\Phi(M y_m)<\infty$, which of course suffices for concluding. 
\end{remarks}

\begin{remark}\label{Four-con} 
For  $\beta>0$ let  $\Phi_\b(t)=e^{-t^{-1/\beta}}$ for all $t>0$ and $\Phi_{\beta}(0)=0$. Moreover,  for any $r>0$ let $\tilde \Phi_{\b,r}(t)=
\int_0^t \Phi_\beta(s)^r\,ds$,  $t\geq 0$.   Fix an $r>0$. 
From the  Salem--Baernstein theorem, \cite[Theorems~11.8 and 11.13]{Dud-Nor}, it follows  that every continuous periodic function of bounded $\tilde \Phi_{\b,r}$-variation has  uniform convergent  Fourier series  if and only if $\b>1$  (argue as on p.~568 in  \cite{Dud-Nor}).  The same result holds with $\tilde \Phi_{\b,r}$ replaced by $\Phi_\b$, which  follows from the estimate 
\begin{equation*}\label{}
 \tilde  \Phi_{\b,1}(t)\leq \Phi_\b(t)\leq \tilde \Phi_{\beta,\frac{1}{2}} (t)
\end{equation*}
which holds for all  $t$ close enough to zero. 
\end{remark}

\section{Proof of Theorem~\ref{thm-fine-var}.}\label{sec3}

 
Let $n\geq 0$ be a fixed positive integer and $H_n:\R\to\R$ be the $n$-th Hermite polynomial which is defined by 
\begin{equation*}
H_n(x)=\frac{(-1)^n}{n!} e^{x^2/2} \frac{d^n}{dx^n} e^{-x^2/2},\qquad n\geq 1, 
\end{equation*}
and $H_0(x)=1$. For each $f\in L^2(\R)$ set $B(f)=\int_\R f(s)\,\dd B(s)$ and  let $\mathcal H_n$ be the $L^2$-closure of the linear span of 
\begin{equation*}\label{sdkfhsdkhf}
\Big\{H_n(B(f)): \,  f\in L^2(\R),\, \| f\|_{L^2}=1\Big\}.
\end{equation*}
Next choose a real number $a_n>0$ such that 
 \begin{equation}\label{def_a_n-def}
n a_n^{1/n} +  \sum_{k=n+1}^\infty \frac{(k a_n^{2/n} 2/n )^k }{k!} \leq 2. 
 \end{equation}
 All $Y\in \mathcal H_n$ satisfy the following equivalence of moments inequality 
\begin{equation*}\label{skfdhkshf}
\| Y\|_p\leq p^{n/2}\|Y\|_2\qquad \text{for all }p>2,
\end{equation*}  
 cf.\  e.g.\ \cite[Eq.~(4.1)]{Ar-Gi}, and hence for all   $Y\in \mathcal H_n$ with  $\|Y\|_2\leq a_n$ we have  
   \begin{align}\notag
  \E \exp(|Y|^{2/n})
= {}&\sum_{k=0}^n  \frac{1}{k!}\E[ |Y|^{2k/n}] + \sum_{k=n+1}^\infty 
\frac{1}{k!}\E[ |Y|^{2k/n}] 
\\ \label{slfjsfghklh}
\leq {}& n a_n^{1/n} +  \sum_{k=n+1}^\infty \frac{(k a_n^{2/n} 2/n )^k }{k!}\leq 2. 
  \end{align}  
\vskip 2 pt  
  
  Throughout the rest of this section $X= \{X(t)\!: t\in [0,1]\}$ will denote an Hermite process of the form \eqref{eq:74}, and we let $Q_t$ denote the kernel 
    \begin{equation*}\label{def-Q-t}
  Q_t(u_1,\dots,u_m)= c_0\int_0^t \prod_{i=1}^m (v-u_i)_+^{-(1/2+(1-H)/m)}\,\dd v, \qquad u_1,\dots,u_m\in \R. 
  \end{equation*}
     Since $X(t)\in \mathcal  H_{m}$ for all $t\in [0,1]$, \eqref{slfjsfghklh} shows that $X$ satisfies \eqref{eq:673} with $\alpha=2/m$ and $d(s,t)= a_m^{-1} \|X_t- X_s\|_2$ for all $s,t\in [0,1]$, where $a_m$ is given by \eqref{def_a_n-def}.    By  self-similarity and stationary increments  we have that    $d(s,t)=  \tilde c_0 |s-t|^H$ for all $s,t\in [0,1]$ and  a suitable constant  $\tilde c_0$.  We let $\Phi$ be given by \eqref{def-Phi} and $\Xi:\R_+\to\R_+$ denote the function
  \begin{equation*}
\Xi(x)=x^{H}(\log_2^*(1/x))^{m/2}\quad  x>0,\qquad \Xi(0)=0.
\end{equation*}
We note that $\Xi$ is an  asymptotic inverse to $\Phi$ in the sense that 
$\Phi(\Xi(x))\sim \Xi(\Phi(x))\sim x$ as $x\to 0$. 
%
%
%
%

\subsection{Proof of the upper bound in Theorem~\ref{thm-fine-var}}
\label{upper-bound}

In the following we will show the upper bound
\begin{equation}\label{eq:upper}
\mathcal V^*_\Phi(X)\leq \sigma_{m,H}\qquad \text{a.s.}
\end{equation}
To show \eqref{eq:upper} we will  need the following two-sided   Law of the Iterated Logarithm for $X$.

\begin{lemma}\label{two-sided-bound-4}
For each $t>0$ we have with probability one, 
\begin{equation}\label{two-sided-bound}
\lim_{\delta\to0} \Big(\sup_{\substack{u,v\geq 0\\ 0<u+v<\delta}}\, 
\frac{|X(t+u)-X(t-v)|}{\Xi(u+v)}\Big)\leq \sigma_{m,H}^H.
 \end{equation}
 \end{lemma}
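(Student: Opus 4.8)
The plan is to establish the local two-sided Law of the Iterated Logarithm by combining a tail estimate for increments of $X$ with a Borel--Cantelli argument over a suitable sequence of shrinking dyadic-type partitions, and then to control the oscillation between grid points by the modulus-of-continuity estimate furnished by Theorem~\ref{t2}. Since $X$ has stationary increments and is self-similar with index $H$, it suffices to work with the single fixed $t$ and exploit that $X(t+u)-X(t-v)$ has the same distribution as $X(u+v)-X(0)$, so that $\|X(t+u)-X(t-v)\|_2=\tilde c_0(u+v)^H$ after the normalization already recorded, where $d(s,t)=\tilde c_0|s-t|^H$. Because $X(t+u)-X(t-v)\in\mathcal H_m$, the equivalence-of-moments inequality $\|Y\|_p\le p^{m/2}\|Y\|_2$ gives a sub-Gaussian-type tail of order $\alpha=2/m$; explicitly, for the normalized increment one obtains $\P\{|X(t+u)-X(t-v)|>\lambda\,\Xi(u+v)\}$ decaying like $\exp(-c\lambda^{2/m}(\log_2^*(1/(u+v)))^{\,\cdot})$, and the double logarithm built into $\Xi$ is precisely what makes this tail summable at the LIL threshold $\lambda=\sigma_{m,H}^H$.

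First I would fix a small parameter $\theta>1$ and introduce the geometric scales $\delta_k=\theta^{-k}$, covering the interval $(t-\delta_k,t+\delta_k)$ by the endpoints of a partition of mesh comparable to $\delta_k$. For each pair of consecutive grid points I would apply the tail bound above with $\lambda$ slightly larger than $\sigma_{m,H}^H$; the number of such pairs at scale $k$ is polynomial in $\theta^{k}$, while the tail is summable against this count because of the $(\log_2^*)^{m/2}$ factor in $\Xi$. A Borel--Cantelli argument then shows that, almost surely, for all large $k$ every increment between consecutive grid points at scale $k$ is bounded by $(\sigma_{m,H}^H+\epsilon)\,\Xi(\text{meshsize})$. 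The reason $\sigma_{m,H}^H$ appears as the sharp constant (rather than $2^{m/2}\|Q\|_{L^2}$) is that in the multiple-integral setting the relevant large-deviation rate is governed by the supremum in \eqref{def-sigma-X}, so the tail constant must be taken from the precise $L^2\!\to L^2$ operator norm encoded there; matching this constant is where the argument has to be done carefully.

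The remaining step is to pass from the discrete bound on grid increments to the continuous supremum over all $u,v\ge0$ with $0<u+v<\delta$. For this I would invoke Theorem~\ref{t2} with $T$ a dyadic block and $d(s,t)=\tilde c_0|s-t|^H$: the metric-entropy integral for this $d$ yields a modulus of continuity for $X$ of the form $|X(s)-X(t)|\le K_\alpha\,|s-t|^H(\log^*(1/|s-t|))^{m/2}$ locally, which is of strictly smaller order than $\Xi(u+v)=(u+v)^H(\log_2^*(1/(u+v)))^{m/2}$ because a single logarithm is dominated by the double logarithm appearing in $\Xi$ after taking the ratio. Hence the within-cell oscillation contributes a vanishing relative error as $\delta\to0$, and the continuous supremum is controlled by the grid supremum up to a factor tending to $1$. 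Letting $\epsilon\downarrow0$ and $\theta\downarrow1$ then yields \eqref{two-sided-bound}. The main obstacle I anticipate is pinning the limiting constant to exactly $\sigma_{m,H}^H$: the Borel--Cantelli half only gives the correct \emph{upper} bound if the tail constant used at each scale is taken sharply from \eqref{def-sigma-X} rather than from the cruder moment bound $2^{m/2}\|Q\|_{L^2}$, so the delicate point is extracting the exact exponential rate of $\P\{|X(1)|>\lambda\}$ for an order-$m$ Hermite random variable and verifying it matches the operator-norm description of $\sigma_{m,H}$.
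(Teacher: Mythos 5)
There is a genuine gap, in fact two, and they sit exactly at the two places where your plan defers the work. First, the sharp constant. The moment-equivalence inequality $\|Y\|_p\le p^{m/2}\|Y\|_2$ only yields a tail of the form $\P\{|Y|>\lambda\|Y\|_2\}\le 2e^{-c\lambda^{2/m}}$ with an unspecified, non-sharp constant $c$; feeding this into a Borel--Cantelli argument produces an upper bound in \eqref{two-sided-bound} of the form $K\sigma_{m,H}^H$ for some $K$ depending on $c$, not $\sigma_{m,H}^H$ itself. You correctly observe that the constant must come from the operator-norm quantity in \eqref{def-sigma-X}, but flagging this as ``the delicate point'' is not the same as supplying the tool. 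What is needed is an exact large-deviation rate, and for Gaussian chaos of order $m$ this is precisely Borell's theorem \cite{Borell}, which the paper invokes: $t^{-2/m}\log\P\big(\sup_{(u,v)\in S(1)}|Z(u,v)|\ge t\big)\to -1/(2\varsigma_Z^{2/m})$, where $\varsigma_Z$ is the supremum of the kernel against unit $L^2$ functions. Crucially, Borell's result concerns the \emph{supremum} of the chaos process over a whole parameter set; combined with the self-similarity identity $S(\delta)=\delta S(1)$, this one tool simultaneously delivers the sharp constant and the control of the continuum supremum, so the paper never needs your grid-plus-oscillation decomposition at all (it then takes scales $\delta_n=e^{-n^{1-\tilde\epsilon}}$, so that $\delta_n/\delta_{n+1}\to1$, and applies Borel--Cantelli).

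Second, your interpolation step is wrong as stated, and in a direction that breaks the argument. You claim the entropy modulus $|s-t|^H(\log^*(1/|s-t|))^{m/2}$ from Theorem~\ref{t2} is of strictly smaller order than $\Xi(h)=h^H(\log^*_2(1/h))^{m/2}$ ``because a single logarithm is dominated by the double logarithm.'' It is the opposite: $\log(1/h)$ grows much faster than $\log\log(1/h)$ as $h\to0$, so the chaining modulus is of strictly \emph{larger} order than $\Xi$. This is the classical dichotomy between the uniform modulus of continuity (single log) and the local LIL (double log). Consequently, the within-cell oscillation on cells of size comparable to $\delta$ is not a vanishing relative error; it would dominate the main term, and the continuum supremum cannot be recovered from the grid supremum this way. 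Any repair of the grid approach would require a sharp-constant bound for the supremum over each cell, which is again Borell's theorem --- at which point one may as well apply it directly to $S(\delta)$ as the paper does.
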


Key elements in the proof is a large deviation result by Borell~\cite{Borell} and methods of 
Dudley  and Norvai{\v{s}}a~\cite[Lemma~12.21]{Dud-Nor}.  
\begin{proof}
Fix $t>0$ and consider the process $Z(u,v)=X(t+u)-X(t-v)$. For all $\delta\in (0,1)$ set 
$S(\delta)=\{(u,v)\in \R_+^2: 0<u+v\leq \delta\}$, and note that  $S(\delta)=\delta S(1)$.  By the stationary 
increments and self-similarity of $X$  it follows that $Z$ is self-similar of index $H$, and hence 
\begin{equation}\label{est-234}
 \P\Big(\sup_{(u,v)\in S(\delta)} |Z(u,v)|>z\Big)=\P\Big(\sup_{(u,v)\in S(1)} |Z(u,v)|>z \delta^{-H}\Big).
\end{equation}
By Borell~\cite{Borell}, we have that 
\begin{equation}\label{Borell-result}
t^{-2/m}\log \P\Big(\sup_{(u,v)\in S(1)} | Z(u,v)|\geq t\Big)\to \frac{-1}{2 \varsigma_Z^{2/m}}\qquad \text{as }t\to \infty 
\end{equation} 
where 
\begin{eqnarray*}
 \varsigma_Z &=&\sup_{{\xi\in L^2(\R)\atop \|\xi\|_{L^2(\R)}\leq 1}\atop (u,v)\in S(1)}\Big|\int_{\R^m} \big(Q_{t+u}(u_1,\dots,u_m)-Q_{t-v}(u_1,\dots,u_m)\big) \xi(u_1)\cdots\xi(u_m)\,\dd u_1\cdots \dd u_m\Big|  .
\end{eqnarray*}
Using the scaling property of the  kernel $\{Q_t\!:t\in [0,1]\}$ we obtain that  $\varsigma_Z=2^{-m/2}\sigma_{m,H}^H$.  
Fix $\epsilon\in (0,1/2)$ and choose $\tilde \epsilon>0$ such that $(1-\tilde \epsilon)^2(1+\epsilon)^{2/m}>1$. 
For $n\in \N$ set $\delta_n=e^{-n^{1-\tilde \epsilon}}$,  $\Xi_n=\delta_n^H \varsigma_Z (2\log^*_2(1/\delta_n))^{m/2}$,  $S_n=S(\delta_n)$ and 
\begin{equation*}
E_n=\Big\{\sup_{(u,v)\in S_n} |Z(u,v)|\geq (1+\epsilon) \Xi_n\Big\}. 
\end{equation*}
By \eqref{est-234},  
 \begin{eqnarray*}
\P(E_n)
&= & \P\Big(\sup_{(u,v)\in S(1)} |Z(u,v)|>(1+\epsilon) \Xi_n \delta_n^{-H}\Big)
\\ 
&= & \P\Big(\sup_{(u,v)\in S(1)} |Z(u,v)|>(1+\epsilon)\varsigma_Z\big(2(1-\tilde \epsilon)\log(n)\big)^{m/2}\Big).
\end{eqnarray*}
According to  \eqref{Borell-result}, there exists $T>0$ such that for all $t\geq T$ 
\begin{equation*}\label{est-Z-Borell}
\P\Big(\sup_{(u,v)\in S(1)} |Z(u,v)| \geq t\Big)\leq \exp\Big(-\frac{t^{2/m}(1-\tilde \epsilon)}{2\varsigma_Z^{2/m}}\Big).
\end{equation*}
Hence there  exists a positive  integer $N\geq 1$ such that for all $n\geq N$ we have 
\begin{align*}
 \P(E_n)\leq  \exp\Big(-( 1-\tilde \epsilon)(1+\epsilon)^{2/m}   (1-\tilde \epsilon)\log(n)\Big)
=  n^{-\b}
 \end{align*}
 where $\b:=(1-\tilde \epsilon)^2(1+\epsilon)^{2/m}$. Since $\b>1$, the Borel--Cantelli lemma shows that there exists a measurable set $\Omega_1$ with $\P(\Omega_1)=1$ and for all $\omega\in \Omega_1$ there exists an integer $n_0(\omega)$ such that for all $n\geq n_0(\omega)$ we have that 
 \begin{equation*}
\sup_{(u,v)\in S_n} |Z(u,v,\omega)|\leq  (1+\epsilon) \Xi_n.
 \end{equation*}
For all $\omega\in \Omega_1$, $\delta\leq \delta_n$ where  $n\geq  n_0(\omega)$ we have 
 \begin{eqnarray*}
& &  \sup_{\substack{u,v\geq 0\\ 0<u+v<\delta}}\, 
\frac{|Z(u,v,\omega)|}{\Xi(|u+v|)}  \leq \sup_{j:\, j\geq n} \sup\Big\{ \frac{|Z(u,v,\omega)|}{\Xi(|u+v|)}: (u,v)\in S_j\setminus S_{j+1}\Big\}
\\ & &\qquad  \leq  \sup_{j:\, j\geq n} \sup\Big\{ \frac{|Z(u,v,\omega)|}{\Xi(\delta_{j+1})}: (u,v)\in S_j\setminus S_{j+1}\Big\}
\\ & &\qquad \leq  \sup_{j:\, j\geq n} \frac{(1+\epsilon)\Xi_j}{\Xi(\delta_{j+1})}= (1+\epsilon)\varsigma_Z 2^{m/2} \sup_{j:\, j\geq n}\Big(\Big(\frac{\delta_{j}}{\delta_{j+1}}\Big)^H\Big(\frac{\log_2^*(1/\delta_j)}{\log_2^*(1/\delta_{j+1})}\Big)^{m/2}\Big).
 \end{eqnarray*}
 Since 
 \begin{equation*}
 \frac{\delta_{j}}{\delta_{j+1}}\to 1 \qquad \text{and}\qquad \frac{\log_2^*(1/\delta_j)}{\log_2^*(1/\delta_{j+1})}\to 1\qquad \text{as }j\to\infty
 \end{equation*}
 we can for all $\omega\in \Omega_1$ choose $n\geq n_0(\omega)$ such that  for all $\delta<\delta_n$ we have
 \begin{equation}\label{con-almost-done}
\sup_{\substack{u,v\geq 0\\ 0<u+v<\delta}}\, 
\frac{|Z(u,v,\omega)|}{\Xi(|u+v|)} \leq (1+\epsilon)^2\varsigma_Z 2^{m/2} .
 \end{equation}
Since $\epsilon>0$ was arbitrary chosen and $\sigma_{m,H}^H=\varsigma_Z 2^{m/2}$, \eqref{con-almost-done} implies  \eqref{two-sided-bound}, and the proof is  complete.  
\end{proof}

\begin{proof}[Proof of Theorem~\ref{thm-fine-var}  (the upper bound)]
We will show the upper bound \eqref{eq:upper}. To this aim recall from the beginning  of this section  that $X$ satisfies \eqref{slfjsfghklh} with $d(s,t)= \tilde c_0 | s-t|^H$ and $\alpha= 2/m$. 
 Set $\Psi(t)=t^{1/H}$ for all $t\geq 0$, and recall that $\Phi$ is defined in \eqref{def-Phi}. According to the proof of Theorem~\ref{cor-Int}(i) we know that $\Phi$ and $\Psi$ satisfy \eqref{con-1-thm1-alpha} for a suitable large constant $C>0$, which will be fixed throughout the proof. Furthermore, since  $\mathcal V(\Psi,d)<\infty$ the conditions of Theorem~\ref{thm-alpha} are satisfied.  By definition of $d$ and $\Psi$ above we have that $F(t)=  \tilde c_0^{1/H} t$ for all $t\in [0,1]$.   Set $K=C F(1)$ and let $\e>0$ be a fixed positive number. 
  For  any partition $\pi=\{0=t_0<\dots<t_n=t\}$ of $[0,1]$ let 
 \begin{align*}
 I_1={}& \Big\{i=1,\dots,n: \Phi(|X(t_i,\o)-X(t_{i-1},\o)|)\leq (1+\epsilon) \sigma_{m,H} (t_i-t_{i-1})|\Big\},
\\  I_2={}& \Big\{i=1,\dots,n: (1+\epsilon)\sigma_{m,H} (t_i-t_{i-1}) <\Phi(|X(t_i,\o)-X(t_{i-1},\o)|)\leq K(t_i-t_{i-1})\Big\},
  \\ I_3={}& \Big\{i=1,\dots,n: K(t_i-t_{i-1}) < \Phi(|X(t_i,\o)-X(t_{i-1},\o)|)\Big\}.
 \end{align*}
By the definition of $I_1$ we have the trivial inequality  
 \begin{equation}\label{est-I_1}
 \sum_{i\in I_1} \Phi(|X(t_i)-X(t_{i-1})|)\leq (1+\epsilon)\sigma_{m,H}.
 \end{equation}
 In the following we will show that the sum over $I_2$ and $I_3$ are negligible. 
 
{\em The  $I_2$-sum:}  Set $\beta=(1+\e)^{\frac{1}{1+1/H}}$. For all $\delta>0$  let 
 \begin{align*}
 U_\delta=\Big\{(t,\omega): {}&  |X(t+u,\o)-X(t-v,\o)|)\leq \beta \sigma_{m,H}^H 
  \Xi(u+v) \quad  \text{for all }u,v\in \R_+,\, 0<u+v\leq \delta\Big\}.
 \end{align*}
 By Lemma~\ref{two-sided-bound-4} and an application of Tonelli's theorem there exists a measurable set $\Omega_0\subseteq \Omega$ with $\P(\Omega_0)=1$ such that for all $\omega\in \Omega_0$, $1_{U_{1/n}}(t,\o)\to 1$ for $\lambda$-a.e.\ $t\in (0,1)$, where $\lambda$ denotes the Lebesgue measure.   For $\o\in \O_0$ we have by Lebesgue's theorem,  
 \begin{equation*}
 \lim_{n\to\infty} \lambda(t\in (0,1): (t,\omega)\in U_{1/n})=\lim_{n\to\infty} \int_0^1 1_{\{(t,\omega)\in U_{1/n}\}} \,\dd t=\int_0^1 1\,\dd t=1
 \end{equation*}
and hence  there exists $\delta_0=\delta_0(\omega)$ such that for all $\delta\leq \delta_0$ we have 
 \begin{equation}\label{eq"723}
\lambda\Big(t\in (0,1): (t,\omega)\in U_{\delta}\Big)\geq 1-\epsilon.
 \end{equation}
 For all $r>0$, $\Phi(r \Xi(x))\sim r^{1/H} x$ as $x\to 0$, and hence we 
  may and do assume that $\delta_0$ is chosen such that for all $x\in (0,\delta_0)$,  
  \begin{equation*}
  \Phi(
 \b\sigma_{m,H}^H\Xi(x))\leq \beta \beta^{1/H}\sigma_{m,H}  x=(1+\e)\sigma_{m,H}  x.
  \end{equation*}
  For   $\delta\leq \delta_0$ and  $\pi\in \Pi_\delta$
we have that 
\begin{equation}\label{eq:2323}
\hbox{$(t,\omega)\in U_\delta$ and $t\in [t_{i-1},t_i]$ for some $i=1,\cdots,n  \quad \Rightarrow \quad   i\in I_1$.} 
\end{equation}
By \eqref{eq"723} and  \eqref{eq:2323} we have  
$ \sum_{i\in I_1} (t_i-t_{i-1})\geq 1-\epsilon $ which implies that $\sum_{i\in I_2} (t_i-t_{i-1})\leq \e$. 
Hence 
 \begin{equation}\label{est-I-2-K}
 \sum_{i\in I_2} \Phi(| X(t_i,\o)-X(t_{i-1},\o)|)\leq K \sum_{i\in I_2} (t_i-t_{i-1})\leq K \e. 
 \end{equation}
 
{\em  The $I_3$-sum:} Recalling that   $F(t)=t F(1)$ for all $t\in [0,1]$, and with  $E(\o)$ defined in \eqref{def-of-E-omega} we have   
\begin{align*}
E(\o)={}& \{i=1,\dots,n: \Phi(|X(t_i,\omega)-X(t_{i-1},\omega)|)> C [F(t_i)-F(t_{i-1})]\}\\ ={}& 
 \{i=1,\dots,n: \Phi(|X(t_i,\omega)-X(t_{i-1},\omega)|)> C F(1) (t_i-t_{i-1})\}=I_3.
\end{align*}
Since the assumptions of Theorem~\ref{thm-alpha} are fulfilled,   we have for all  $m_0\in \Z$  with $\max_{1\leq i\leq n}F(t_i)-F(t_{i-1})\leq 2^{-m_0}$ that 
\begin{align}\label{est-I_3}
\sum_{i\in I_3} \Phi(|X(t_i,\o)-X(t_{i-1},\o)|) \leq 9 \sum_{m=m_0}^\infty  \Phi\big( \Theta(\omega) c_d y_m\big)
  Z_{m-2}(\omega)<\infty 
\end{align}
almost surely, cf.\    \eqref{sdhkfs}.  Next we choose   $m_0=m_0(\o)$ such that almost surely 
\begin{equation}\label{est-sum-9}
9 \sum_{m=m_0}^\infty  \Phi\big(\Theta(\omega) y_m\big)
  Z_{m-2}(\omega)\leq \epsilon. 
\end{equation}
Furthermore, choose  $\delta=\delta(\o)>0$ such that $|F(t)-F(s)|\leq 2^{-m_0}$ for all $s,t\in [0,1]$ with $|s-t|\leq \delta$.  
For all $\pi\in \Pi_\delta$ we have     
$\max_{1\leq i\leq n} F(t_i)-F(t_{i-1})\leq 2^{-m_0}$ and hence by \eqref{est-I_3} and \eqref{est-sum-9} 
\begin{equation}
\label{eq-I_3-est} 
\sum_{i\in I_3} \Phi(|X(t_i)-X(t_{i-1})|)\leq  \e 
\end{equation}
with probability one. 

By combining \eqref{est-I_1}, \eqref{est-I-2-K} and \eqref{eq-I_3-est}  we have with probability one
\begin{equation*}
\mathcal V^*_\Phi(X)=\lim_{\delta\to 0}\Big( \sup_{\pi\in \Pi_\delta} v_\Phi(X,\pi)\Big)\leq \epsilon + K\epsilon + (1+\epsilon) \sigma_{m,H},
\end{equation*}
which proves \eqref{eq:upper} since $\e$ was chosen arbitrary.  
\end{proof}

\subsection{Proof of the lower bound in Theorem~\ref{thm-fine-var}}
\label{lower-bound}

We first recall  a result of Marcinkiewicz based on Vitali covering lemma. Let  $f\in  \mathcal B_p$,  then 
\begin{equation}\label{mar1} \int_0^1 \limsup_{h\to 0} \frac{|f(x+h)-f(x)|^p}{|h|}\,\dd x \le \mathcal V_p(f).
\end{equation}
As an extension of this result we have the following lemma (we 
recall that the limiting $\Phi$-variation $\mathcal V^*_\Phi(f)$ is defined in  \eqref{def-lim-Phi}). 

\begin{lemma}\label{lemma-Mar}
Let $\phi,\psi: \R_+\to \R_+$ be continuous, strictly increasing    functions such that $\phi(0)=\psi(0)=0$ and $\lim_{t\to \infty} \phi(t)=\lim_{t\to \infty} \psi(t)=\infty$. Let $H^\psi$ be the   Hausdorff measure with determining function $\psi$. For all  $f:[0,1]\to \R$ measurable we have 
\begin{equation}\label{eq:2341q234}
 H^\psi \Big(x\in [0,1]: \limsup_{h\to 0} \frac{|f(x+h)-f(x)|}{\phi(h)}> 1\Big)\leq  \mathcal V_{\psi\circ \phi^{-1}}^* (f).
\end{equation}
\end{lemma}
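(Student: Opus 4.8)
The plan is to combine a pointwise rewriting of the inequality defining the exceptional set with a Vitali covering argument, carried out exactly as in the Marcinkiewicz estimate \eqref{mar1} but at the level of the Hausdorff measure $H^\psi$ rather than Lebesgue measure. Write $G:=\psi\circ\phi^{-1}$, which is again continuous, strictly increasing with $G(0)=0$, set $A=\{x\in[0,1]:\limsup_{h\to0}|f(x+h)-f(x)|/\phi(h)>1\}$, and put $V:=\mathcal V^*_G(f)$. If $V=\infty$ there is nothing to prove, so assume $V<\infty$; then $\sup_{\pi\in\Pi_\delta}v_G(f,\pi)\to V$ as $\delta\to0$, so this supremum is finite for all small $\delta$.

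The first step is the rewriting. For $x\in A$ there are arbitrarily small $h$ with $x+h\in[0,1]$ and $|f(x+h)-f(x)|>\phi(h)$; applying $\phi^{-1}$ and then $\psi$ (both strictly increasing) shows this is equivalent to $G(|f(x+h)-f(x)|)>\psi(h)$. Thus, writing $I=[a,b]$ for the closed interval with endpoints $x$ and $x+h$, the intervals satisfying $G(|f(b)-f(a)|)>\psi(b-a)$ form a Vitali class for $A$: each $x\in A$ is an endpoint of such intervals of arbitrarily small diameter. Fix $\epsilon>0$ and a $\delta>0$ small enough that $\sup_{\pi\in\Pi_\delta}v_G(f,\pi)\le V+\epsilon$, and discard from the class all intervals of length $>\delta$.

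The key estimate bounds the $\psi$-size of any disjoint finite subcollection. If $I_k=[a_k,b_k]$, $k=1,\dots,N$, are pairwise disjoint members of the class, then, adjoining the points $0,1$ and subdividing each gap between consecutive intervals finely enough, we obtain a partition $\pi\in\Pi_\delta$ retaining all endpoints $a_k,b_k$. Since the terms of $v_G(f,\pi)$ coming from the intervals $I_k$ are kept and all remaining terms are nonnegative,
\[
\sum_{k=1}^N\psi(b_k-a_k)<\sum_{k=1}^N G(|f(b_k)-f(a_k)|)\le v_G(f,\pi)\le V+\epsilon .
\]
Note that this uses only that extra partition points contribute nonnegative summands, so no subadditivity of $G$ is required. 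Consequently every countable disjoint subcollection $\{I_k\}$ of the class satisfies $\sum_k\psi(\mathrm{diam}\,I_k)\le V+\epsilon<\infty$.

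Finally I would apply the Vitali covering theorem for the Hausdorff measure $H^\psi$ (the gauge-function analogue of the classical theorem for $H^s$): from the Vitali class above one selects a countable disjoint subfamily $\{I_k\}$ for which either $\sum_k\psi(\mathrm{diam}\,I_k)=\infty$ or $H^\psi(A\setminus\bigcup_kI_k)=0$. The first alternative is excluded by the key estimate, so the second holds, and then, using $H^\psi(I_k)\le\psi(\mathrm{diam}\,I_k)$ together with countable subadditivity,
\[
H^\psi(A)\le\sum_k H^\psi(I_k)\le\sum_k\psi(\mathrm{diam}\,I_k)\le V+\epsilon ,
\]
whence \eqref{eq:2341q234} on letting $\epsilon\to0$. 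The step I expect to require the most care is the invocation of the Vitali covering theorem for $H^\psi$: in its usual form it assumes finite measure, so if $H^\psi(A)=\infty$ one first runs the argument on closed subsets $E\subseteq A$ with $H^\psi(E)<\infty$ (whose bad intervals still form a Vitali class for $E$) to obtain $H^\psi(E)\le V+\epsilon$, and then takes the supremum over such $E$. Verifying measurability of $A$ and the endpoint-versus-interior bookkeeping for the $\limsup$ are the remaining routine points.
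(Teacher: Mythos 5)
Your proposal is correct and follows essentially the same route as the paper's proof: both rewrite the defining inequality as $\psi(h)<(\psi\circ\phi^{-1})(|f(x+h)-f(x)|)$ on bad intervals, observe that these intervals form a Vitali class for the exceptional set, extract a disjoint subfamily via the Vitali covering theorem for Hausdorff measures, and embed that subfamily into a partition of mesh at most $\delta$ so that the disjoint sum is dominated by $v_{\psi\circ\phi^{-1}}(f,\pi)$. The only differences are organizational (the paper extracts a finite disjoint family with $\sum_n\psi(h_n)\geq H^\psi(A)-\e$ and bounds it above, whereas you bound all disjoint sums by $\mathcal V^*+\e$ first and then invoke Vitali), together with your additional, welcome care about the cases $\mathcal V^*=\infty$ and $H^\psi(A)=\infty$.
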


Note that  $H^\psi$ coincide with the  Lebesgue measure when $\psi$ is the identity function. 

\begin{proof}
Set $\tilde \Phi=\psi\circ \phi^{-1}$. We use a simple adaptation of Marcinkiewcz's argument, namely a direct application of Vitali's covering lemma for Hausdorff's measures \cite{F}. For all $\delta>0$ let 
\begin{equation*}
E_\delta=
\Big\{[x,x+h]: t\in (0,1),\, h\leq (1-x)\wedge \delta, \   |f(x+h)-f(x)|)>\phi(h)\Big\}.
\end{equation*}
Then $E_\delta$ is a Vitali covering of the set 
\begin{equation*}
A:=\Big\{x\in [0,1]: \limsup_{h\to 0} \frac{|f(x+h)-f(x)|}{\phi(h)}> 1\Big\},
\end{equation*}
 and hence we can for all $\epsilon>0$ pick a finite family of disjoint intervals in $E_\delta$, say $(x_n,x_n+h_n)$ for 
$n=1,\dots,N$,  such that 
$$ \sum_{n=1}^N \psi(h_n)\ge H^\psi (A)-\e .$$
By definition of $E_\delta$ we have that $h_n\leq \delta $ for all $n=1,\dots,N$. Let $\pi$ be a partition in $\Pi_\delta$ which includes the disjoint intervals $(x_n,x_n+h_n)$ 
for $n=1,\dots,N$. By definition of $\Phi$ and $\pi$ 
$$ v_{\tilde \Phi}(f,\pi)\ge  \sum_{n=1}^N \tilde \Phi(|f(x_n+h_n)-f(x_n)|) \ge  \sum_{n=1}^N\psi(h_n)\ge H^\psi (A)-\e,$$
which proves \eqref{eq:2341q234}.
 \end{proof}

 Proposition~\ref{LIL-Hermite} from the appendix has the following corollary which we will use in the proof of the lower bound:
\begin{corollary}\label{cor-LIL}
With probability one, 
 \begin{equation*}
  \limsup_{n\to\infty } \, \frac{|X(1/n)|}{\Xi(1/n)} = \sigma_{m,H}^H.
 \end{equation*}
\end{corollary}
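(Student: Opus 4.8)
The plan is to deduce the equality from the continuous-time law of the iterated logarithm furnished by Proposition~\ref{LIL-Hermite}, which I read as providing $\limsup_{t\downarrow 0}|X(t)|/\Xi(t)=\sigma_{m,H}^H$ almost surely. The upper bound is then immediate: since $\{1/n:n\in\N\}$ decreases to $0$, restricting the continuous $\limsup$ to this subsequence can only decrease it, so $\limsup_{n}|X(1/n)|/\Xi(1/n)\le \sigma_{m,H}^H$ a.s. The entire difficulty lies in the reverse inequality, because a priori the continuous $\limsup$ could be realised along times lying far from any reciprocal $1/n$. I would establish it by transferring the (random) continuous record times onto the grid $\{1/n\}$, using the uniform modulus of continuity of $X$ to control the cost of the transfer.

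Concretely, I would work on the almost sure event on which both Proposition~\ref{LIL-Hermite} holds and $\Theta:=\sup_{s,t\in[0,1]}|X(s)-X(t)|/\delta(d(s,t))<\infty$, the latter guaranteed by Theorem~\ref{t2}. By the definition of the $\limsup$ there is a random sequence $s_k\downarrow 0$ with $|X(s_k)|/\Xi(s_k)\to \sigma_{m,H}^H$. Set $n_k=\lceil 1/s_k\rceil$, so that $1/n_k\le s_k< 1/(n_k-1)$, $n_k\to\infty$, and crucially $|s_k-1/n_k|\le (n_k(n_k-1))^{-1}$. Writing
\[
\frac{|X(1/n_k)|}{\Xi(1/n_k)}\ \ge\ \frac{|X(s_k)|}{\Xi(s_k)}\cdot\frac{\Xi(s_k)}{\Xi(1/n_k)}-\frac{|X(1/n_k)-X(s_k)|}{\Xi(1/n_k)},
\]
the first factor tends to $\sigma_{m,H}^H$, and since $\Xi$ is regularly varying at $0$ of index $H$ and $n_k s_k\to 1$, the ratio $\Xi(s_k)/\Xi(1/n_k)\to 1$. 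Thus the first term converges to $\sigma_{m,H}^H$.

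It remains to show the correction term is negligible, which is the key step. Using $d(s,t)=\tilde c_0|s-t|^H$ together with the entropy bound $\delta(\e)\le C\,\e(\log^*(1/\e))^{m/2}$ from Remark~\ref{entropy} (here $\alpha=2/m$, so $1/\alpha=m/2$), the uniform modulus gives
\[
|X(1/n_k)-X(s_k)|\ \le\ \Theta\,\delta\big(d(1/n_k,s_k)\big)\ \le\ C\,\Theta\,|s_k-1/n_k|^{H}\big(\log^*(1/|s_k-1/n_k|)\big)^{m/2}.
\]
Because $|s_k-1/n_k|\lesssim n_k^{-2}$, the right-hand side is $\lesssim \Theta\,n_k^{-2H}(\log n_k)^{m/2}$, whereas $\Xi(1/n_k)\asymp n_k^{-H}(\log_2^* n_k)^{m/2}$; hence the correction term is $\lesssim \Theta\, n_k^{-H}(\log n_k)^{m/2}(\log_2^* n_k)^{-m/2}\to 0$. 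Therefore $|X(1/n_k)|/\Xi(1/n_k)\to \sigma_{m,H}^H$ along this subsequence, giving $\limsup_n |X(1/n)|/\Xi(1/n)\ge \sigma_{m,H}^H$ and completing the proof. The main obstacle is exactly this transfer, and the quantitative point that makes it succeed is that near $1/n_k$ the grid $\{1/n\}$ is super-fine, with spacing of order $n_k^{-2}$ rather than $n_k^{-1}$: this is what renders the increment $|X(1/n_k)-X(s_k)|$ negligible on the scale $\Xi(1/n_k)$, once the almost sure finiteness of $\Theta$ from Theorem~\ref{t2} is in hand.
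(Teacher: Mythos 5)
Your grid-transfer computation is technically sound in isolation (the spacing $|s_k-1/n_k|\lesssim n_k^{-2}$, combined with the modulus bound $|X(1/n_k)-X(s_k)|\le \Theta\,\delta(d(1/n_k,s_k))$ from Theorem~\ref{t2} and Remark~\ref{entropy}, does make the correction term negligible), but the proof rests on an input that Proposition~\ref{LIL-Hermite} does not provide. That proposition is a \emph{functional} law of the iterated logarithm for the discrete sequence of rescaled processes $X_n$ defined in \eqref{def-X-n}, $n\in\N$: it asserts relative compactness of $\{X_n\}$ in $C_\nu(\R_+)$ with limit set $K_Q$. It does not state the continuous-time result $\limsup_{t\downarrow 0}|X(t)|/\Xi(t)=\sigma_{m,H}^H$ that you take as your starting point, and in particular the lower bound of that continuous-time statement is nowhere available to you at this stage: in the paper's own logic it is a \emph{consequence} of Corollary~\ref{cor-LIL} together with stationarity of increments (see the proof of the lower bound of Theorem~\ref{thm-fine-var}), not an ingredient for it. To extract it from the functional LIL you would choose $y\in K_Q$ maximizing $|y(1)|$, take a subsequence $X_{n_j}\to y$ in $\|\cdot\|_\nu$, and evaluate at $t=1$ --- but that evaluation already \emph{is} the corollary (up to unwinding constants), so your subsequent transfer from the continuum back to the grid $\{1/n\}$ is a detour around a step you would have already completed.

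Indeed, the paper's (implicit) proof is exactly this evaluation: the map $y\mapsto y(1)$ is $\|\cdot\|_\nu$-continuous since $|y(1)|\le \nu(1)\|y\|_\nu$, so the set of limit points of the real sequence $X_n(1)$ is precisely $\{y(1):y\in K_Q\}$, whence $\limsup_n |X_n(1)|=\max_{y\in K_Q}|y(1)|=2^{-m/2}\sigma_{m,H}^H$ by \eqref{def-sigma-X}; since the normalization in \eqref{def-X-n} is asymptotic to $2^{m/2}\,\Xi(1/n)$, this gives both bounds of the corollary at once, with no modulus-of-continuity argument at all. Your approach would be the correct repair if one genuinely possessed only a continuous-time LIL and needed the statement along the grid $\{1/n\}$; given what Proposition~\ref{LIL-Hermite} actually says, the direction of your deduction is backwards, and as written the proof has a genuine gap at its foundation, namely the unproved (and, at this point of the paper, unprovable without redoing the corollary) continuous-time statement.
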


  We are now ready to prove the lower bound in Theorem~\ref{thm-fine-var}:

\begin{proof}[Proof of Theorem~\ref{thm-fine-var}  (the lower bound)]
In the following we will show that
\begin{equation}\label{the-lower-bound023}
\mathcal V^*_\Phi(X)\geq \sigma_{m,H}\qquad \text{a.s.} 
\end{equation}
    Let $\epsilon\in (0,1)$ be fixed. 
Set   $\phi(t)= (1-\epsilon)\sigma_{m,H}^H \Xi(t)$ for $t\geq 0$. By   Corollary~\ref{cor-LIL} and the stationary increments of $X$ we have for all $t\in [0,1]$ that  almost surely 
      \begin{align*}
\limsup_{h\to 0} \frac{|X(t+h)-X(t)|}{\phi(h)}>1.
  \end{align*}
  Hence by Tonelli's theorem, we have  for almost all $\omega\in \Omega$ that 
 \begin{equation}\label{eq:822}
1=\lambda\Big(t\in [0,1]: \limsup_{h\to 0} \frac{|X(t+h,\omega)-X(t,\omega)|}{\phi(h)}>1\Big).
  \end{equation}
   We will use Lemma~\ref{lemma-Mar} with $\psi$ being the identity function, which by  \eqref{eq:822} implies   $\mathcal V^*_{\phi^{-1}}(X)\geq 1$ a.s. Since $\phi^{-1}(t)\sim \sigma_{m,H}^{-1} (1-\e)^{-1/H}\Phi(t)$ as $t\to 0$, we have $\mathcal V_\Phi^*(X)\geq (1-\e)^{1/H}\sigma_{m,H}$ a.s., which implies \eqref{the-lower-bound023} since $\e$ was arbitrary chosen. 
\end{proof}

\subsection{Proof of the remainding parts of Theorem~\ref{thm-fine-var}}
The upper and lower  bounds \eqref{eq:upper}  and \eqref{the-lower-bound023} implies  \eqref{fine-variation}. 
We have that  $\mathcal V_\Phi^*(X)<\infty$ a.s.\ implies $\mathcal V_\Phi(X)<\infty$ a.s.\  since $X$ has bounded sample paths.  The inequality $\mathcal V_\Phi^*(X)\leq \mathcal V_\Phi(X)$ and \eqref{fine-variation} show $\sigma_{m,H}\leq \mathcal V_\Phi(X)<\infty$ a.s.\  To show the last claim let  $\tilde \Phi:\R_+\to\R_+$ be a function satisfying $\tilde \Phi(x) /\Phi(x)\to \infty$ as $x\to 0$.  For each partition $\pi=\{0=t_0<\dots<t_n=1\}$ of $[0,1]$ we have 
   \begin{align}\label{eq-25}
   v_{\tilde \Phi}(X,\pi)= {}& \sum_{i=1}^n \tilde \Phi(|X(t_i)-X(t_{i-1})|)\cr 
  \ge {}&   \Big(\min_{i=1,\dots,n} \frac{\tilde \Phi(|X(t_i)-X(t_{i-1})|)}{\Phi(|X(t_i)-X(t_{i-1})|)}\Big)\sum_{i=1}^n \Phi(|X(t_i)-X(t_{i-1})|).
   \end{align}
For all   $K>0$ choose $\epsilon>0$ such that $\tilde\Phi(x)/\Phi(x)\geq K$ for all $x\in (0,\epsilon]$. By continuity of $X$ we may choose $\delta=\delta(\o) >0$ such that $|X(t)-X(s)|\leq \epsilon$ for all $s,t\in [0,1]$ with $|s-t|\leq \delta$. By \eqref{eq-25}, 
  \begin{align*}
 \sup_{\pi\in \Pi_\delta} v_{\tilde\Phi}(X,\pi)\geq K \sup_{\pi\in \Pi_\delta} v_\Phi(X,\pi)\geq K \sigma_{m,H},
 \end{align*}
   which shows that 
   $\mathcal V_{\tilde \Phi}(X)\geq K \sigma_{m,H}$ for all $K>0$, and hence $\mathcal V_{\tilde \Phi}(X)=\infty$ since $\sigma_{m,H}>0$. This completes the proof of Theorem~\ref{thm-fine-var}. \qed

 \section{Proofs of  Theorem~\ref{t1}, Corollary~\ref{cor-int}
 and   Proposition~\ref{on-ness-con}.}\label{section4}

 \subsection{Proof of Theorem~\ref{t1}} 

 
%
Let $C>0$ be a constant such that \eqref{con-1-thm1-alpha} is satisfied, and notice from the definition of $C$ that it does not depend on the metric $d$.  
 We will use the same notation, and some of the same decompositions, as in the proof of Theorem~\ref{thm-alpha}, in particular, $E(\o), \Theta(\o), c_d, F, y_m$ and $x_m$ are defined as in the proof of Theorem~\ref{thm-alpha}.  Moreover, let $\pi = \{0= t_0<\dots <t_n=1\}$ be a partition of $[0,1]$. Throughout the proof $K$ will denote a constant which does not depend on the metric $d$, but might vary from line to line. 
Set $U:=\{i =1,\dots,n \!: F(t_i)-F(t_{i-1})\leq  1\}$. Since $\sharp (U^c) \leq F(1)$ we have that 
\begin{align*}
 {}&  \sum_{i\in E(\omega)} \Phi\big( |X(t_i,\omega)-X(t_{i-1},\omega)|\big)  \\  {}& \qquad  \leq \sum_{i\in E(\omega)\cap U} \Phi\big( |X(t_i,\omega)-X(t_{i-1},\omega)|\big)  + F(1) \Phi\Big(\sup_{s,t\in [0,1]} |X(t,\o)-X(s,\o)|\Big).
\end{align*}   
Let $m_0$ be the greatest  integer satisfying $F(1)\wedge 1\leq 2^{-m_0}$, and note that $m_0\geq 0$.   By  definition of $U$ and  decomposition \eqref{eq:2424} there exists a measurable set $\Omega_0$ with $\P(\Omega_0)=1$ such that for all $\o\in \Omega_0$,
\begin{equation}
\label{dep2}
  \mathcal   V_\Phi(X(\cdot,\omega))     \le    9 
  \sum_{m=m_0}^\infty  \Phi\Big( \Theta(\omega) c_d y_m \Big) Z_{m-2}(\omega)
+C F(1) + F(1) \Phi\Big(\sup_{s,t\in [0,1]} |X(t)-X(s)|\Big). 
  \end{equation} 
   We have $\|\Theta\|_{\phi_2}\le K$. 
  There is no loss to assume $K\ge 1$. As $ \E e^{(\frac{\Theta }{K})^2}\le 2$, it follows from  Jensen's inequality that
$\E
\Theta
\le K(\log 2)^{1/2} \le K$.
 We have  for $\o\in  A:=\{\Theta\le 4\E
\Theta\}$,  
\begin{equation}\label{eq:6378}
\sum_{m=m_0}^\infty  \Phi\big( \Theta(\omega) c_d y_m \big) Z_{m-2}(\omega)
\leq  K c_d^{p}   \sum_{m=m_0}^\infty 
\Phi\big(y_m\big)  Z_{m-2}(\omega) =:  K c_d^p \mathcal Z(\o), 
  \end{equation}
and   by Tchebycheff's inequality, $\P(A)\ge 3/4$.  
Further, from (\ref{zm1}) 
  \begin{equation*}\label{zm3} 
  \E \mathcal Z=    \sum_{m= m_0}^\infty  \Phi( y_m)  \E Z_{m-2} \leq K F(1) \sum_{m=0}^\infty 2^m \Phi(y_m) e^{-x_m^2}\leq K F(1). 
\end{equation*}
   Let $B:= \{\mathcal Z \le 4\E \mathcal Z\}$. By Tchebycheff's inequality again, $\P(B)\ge 3/4$,  and so $\O_1:=A\cap B$ has
probability larger than $1/2$. By the definition of $\Theta$ in \eqref{theta}, we have that $\sup_{s,t\in [0,1]} |X(t)-X(s)|\leq \Theta \delta (D)$.
Set $\tilde \Psi := \Psi/F(1)$, and hence  $\tilde \Psi^{-1} (x) = \Psi^{-1}(F(1) x)$ for $x\geq 0$. 
Recalling $D\leq \Psi^{-1}(F(1))$, we have  by the assumption \eqref{eq:436728} on $\Psi$ that 
\begin{align*}
\delta(D)\leq {}&  \int_0^{\Psi^{-1}(F(1))} \log^*\Big( \frac{ F(1)}{\Psi(\e)} \Big)^{1/2} \,\dd \e = 
\int_0^{1} \log^*\Big( \frac{ 1}{\e} \Big)^{1/2} \,\tilde \Psi^{-1}(\dd \e) \\ \label{eq:7262}
= {}& F(1) \int_0^{1} \log^*\Big( \frac{ 1}{\e} \Big)^{1/2} (\Psi^{-1})'(F(1) \e)\,\dd \e \leq \Big(K   \int_0^{1} \log^*\Big( \frac{ 1}{\e} \Big)^{1/2} \,\Psi^{-1}(\dd \e) \Big)F(1)^{q}.
\end{align*}
  Thus, for $\o\in \O_1$ we have 
\begin{equation}\label{eq:673657}
\Phi\Big(\sup_{s,t\in [0,1]} |X(t)-X(s)|\Big)\leq \Phi( \Theta \delta(D))
\leq K F(1)^{p q}\Phi(\Theta).
\end{equation}
Recall that $c_d^p = [(4 F(1))\vee 1]^{p/2}\leq K(F(1)^{p/2} +1)$.
By  \eqref{dep2}--\eqref{eq:673657} we have for all 
   $\o\in \O_0\cap \O_1$ that 
\begin{equation*}
\mathcal V_\Phi(X(\cdot,\o))\leq K\Big( F(1)^{p/2+1}    + F(1) + F(1)^{p q}\Big).
\end{equation*}
Hence, by the assumption \eqref{eq:436728} on $\Phi$ we have for all $r>0$ that 
\begin{equation*}
\mathcal V_\Phi(X(\cdot,\o)/r) \leq K r^{-p} \mathcal V_\Phi(X(\cdot,\o))\leq 
K r^{-p} \Big( F(1)^{p/2+1}    + F(1) + F(1)^{p q}\Big), 
\end{equation*}
which by the definition of $\|\cdot \|_\Phi$ shows that 
\begin{equation*}\label{eq:73937}
\|X(\cdot,\o)\|_\Phi \leq K\Big( F(1)^{p/2+1}    + F(1) + F(1)^{p q}\Big)^{1/p}.
\end{equation*}
The estimate \eqref{eq:6378} and  the   strong integrability properties of Gaussian semi-norms (see \cite{F} inequality 0.34) achieves  the proof.
\qed 

\subsection{Proof of Corollary~\ref{cor-int}}
 We apply Theorem~\ref{t1}  with  $\Psi(x)=  x^p[\log_2^*(1/(x\wedge 1))]^{p/2}$ and $\Phi(x)=x^p$   for $x>0$. By the proof of Theorem~\ref{cor-Int}\eqref{Int-item-2} the conditions of Theorem~\ref{thm-alpha} are satisfied. To show  \eqref{eq:436728} we notice that 
 $\Phi(xy)= x^p \Phi(y)$,  and hence the second part of \eqref{eq:436728} is  satisfied. Since  $\Psi$ is strictly increasing and continuous on $[0,\infty)$ and continuous differentiable in $(0,\infty)\setminus \{1\}$, it follows that 
 $\Psi^{-1}$ is absolutely continuous. Moreover, we deduce  that $(\Psi^{-1})'(x)\sim px^{1/p-1}(\log_2^*(1/x))^{-1/2}$ as $x\to 0$,  which implies the existence of two constants $K_1,K_2>0$ such that 
\begin{equation}\label{est-psi'}
K_1 x^{1/p-1} [\log_2^*(1/ (x\wedge 1))]^{-1/2}\leq (\Psi^{-1})'(x)\leq K_{2}  x^{1/p-1} [\log_2^*(1/ (x\wedge 1))]^{-1/2} 
 \end{equation}
 for all $x>0$.  For all $x,y> 0$ we obtain by \eqref{est-psi'} that 
 \begin{align*}
  (\Psi^{-1})'(x y )\leq {}& K (x y)^{1/p-1} [\log_2^*(1/ (xy\wedge 1))]^{-1/2} \\ \leq {}& K 
  x^{1/p-1}  y^{1/p-1}[\log_2^*(1/ (y\wedge 1))]^{-1/2} \leq K x^{1/p-1} (\Psi^{-1})'(y),
 \end{align*}
which shows that $(\Psi^{-1})'$ satisfies the first part of \eqref{eq:436728} with $q=1/p$.  Hence, the corollary follows by Theorem~\ref{t1}. 
\qed

%
%
%
%
 
 \vskip 3 pt
  Before we proving Proposition~\ref{on-ness-con} we note the following: If $\Phi$ satisfies the $\Delta_2$-condition 
  ($\Phi(2x)\leq C\Phi(x)$, $x\geq 0$)
 then  with $c_0: = \log(C)/\log(2)$ we have 
\begin{equation}\label{eq:89214}
\Phi(x)\leq K (1+x^{c_0})\qquad \text{for all }x\geq 0.
\end{equation}
 Indeed, this estimate follows by successive   applications of the  $\Delta_2$-condition.
  
 \subsection{Proof of Proposition~\ref{on-ness-con}}
The assumption  $\mathcal V_\Phi(X)<\infty$ a.s.\  implies that  $\| X \|_\Phi<\infty$ a.s., and  by the strong integrability of Gaussian semi-norms we have  $\E[ e^{\epsilon \| X\|_\Phi^2}]<\infty$ for some $\e>0$. Using \eqref{eq:89214} we  deduce  $\E[\mathcal V_\Phi(X)]<\infty$. Next we note that 
       \begin{align}\label{eq:9462}
   \E[ \mathcal V_\Phi(X)]\geq {}&\sup_{0=t_0<\dots<t_n=1 \atop n\in \N} \sum_{i=1}^n \E[ \Phi(|X({t_i})-X(t_{i-1})|)] \\ \label{eq:9463}\geq {}&  \sup_{0=t_0<\dots<t_n=1 \atop n\in \N} \sum_{i=1}^n\Phi( \E[ |X(t_i)-X(t_{i-1})|]) =  \sup_{0=t_0<\dots<t_n=1 \atop n\in \N} \sum_{i=1}^n\Phi( C d(s,t)) 
  \end{align}
  where the last inequality follows by convexity of $\Phi$ and Jensen's inequality. Note that due to  separability of $X$ it is enough to take supremum over a countable family  of partitions of $[0,1]$ in   \eqref{eq:9462}--\eqref{eq:9463}.  By  \eqref{eq:89214}, we obtain $\mathcal V(\Phi,d)<\infty$ which completes the proof. \qed

\vskip 5pt 
 
Before continuing, let us make a general remark on modulus of continuity and $\Phi$-variation. 
\begin{remark}   A sufficient condition for $f$ to belong to $  \mathcal B_\Phi$ is that there exists $\Psi: \R_+\to \R_+$   increasing and a pseudo-metric $d$ on $T$  such that  
\begin{eqnarray} \label{dqfbq} \qq {\rm (i)}  \ 
\mathcal V(\Psi,d)<\infty, \qquad 
{\rm (ii)}\ \     M_{\Phi, \Psi}(f):=\sup_{s,t\in [0,1]\atop d(s,t )>0} \frac{\Phi\big(|f(s)-f(t )|\big)}{\Psi\big( d(s,t )\big)}<\infty.
\end{eqnarray}
    Choosing for instance  $\Phi(x)= |x|^p$, $\Psi(x)=|x|^p (\log^*|x|)^{p/2}$ shows that $f\in \mathcal B_p$ as soon as 
\begin{eqnarray*} {\rm (a)}\sup_{0=t_0<\dots<t_n=1 \atop n\in \N}\sum_{i=1}^{n-1}d(t_{i+1},t_{i })^p \big(\log^*\frac{1}{d(t_{i+1},t_{i })}\big)^{p/2} <\infty, \quad   {\rm (b)}\sup_{s,t\in [0,1]\atop d(s,t )>0} \frac{ \big|f(s)-f(t )\big| }{d(s,t ) (\log^*\frac{1}{d(s,t )})^{1/2}}<\infty.
\end{eqnarray*}
Letting $f=X(\cdot,\o)$ where $X$ is  Gaussian and  $d(s,t)=\|
X(s) -X(t)\|_2$,  we see that condition (b)   is for instance satisfied  under assumption \eqref{eq:discussJaMo}, which is a weak requirement. However condition (a) although   general, is too strong  compared to  assumption \eqref{eq:JaMo}. As to \eqref{dqfbq}, it  is a consequence of  
\begin{align*} {}& \sum_{i=1}^{n} \Phi\big(|f(t_{i+1})-f(t_i)|\big)=\sum_{i=1}^{n}
\frac{\Phi\big(|f(t_{i+1})-f(t_i)|\big)}{\Psi\big( d(t_{i+1},t_{i })\big)}\Psi\big( d(t_{i+1},t_{i })\big)
\\ {}& \qquad \leq    
\Big(\sup_{s,t\in [0,1]\atop d(s,t )>0} \frac{\Phi\big(|f(s)-f(t )|\big)}{\Psi\big( d(s,t )\big)}\Big)  \sum_{i=1}^{n}\Psi\big( d(t_{i+1},t_{i })\big), 
\end{align*} 
which shows that  $\mathcal V_\Phi(f)\le M_{\Phi, \Psi}(f)\mathcal V(\Psi,d)$.
\end{remark}

%
%


   \appendix
   
  \section{Proof of Theorem~\ref{t2}} \label{appendix-1}
 We prove Theorem \ref{t2} by using the metric entropy method.
  Let 
$$N(\e)= \max(N(T,d,\e ),D/\e) , \qq 0<\e\le D .$$ For   $n=0, 1,\ldots$ let $\e_n=2^{-n }D $, $v_n=12 \e_n (\log N(\e_n))^{1/\a}$,  and let
$\vartheta_n \subset T$ be a sequence of centers of
$d$-balls corresponding to a minimal covering of $T$ of size $\e_n$, $\#\vartheta_n= N(T,d,\e_n)$, and let 
 $\vartheta_0 =\{s_0\}$.  
We first note that 
\begin{eqnarray*}  \sum_{n=1}^\infty v_n&\le & 12 \sum_{n=1}^\infty  \e_n \big(\log N(T,d,\e_n )+ \log  (D/\e_n) )^{1/\a}
\cr &\le &  12\Big( 2\sum_{n=1}^\infty \int_{\e_{n+1}}^{\e_n}   \big(\log N(T,d,\e  )  )^{1/\a}\dd \e +D\sum_{n=1}^\infty  2^{-n}
n^{1/\a}\Big)
\cr &=&  12\Big( 2 \int_{0}^{D/2}   \big(\log N(T,d,\e  )  )^{1/\a}\dd \e +C_\a D \Big)\le C_\a \d(D).\end{eqnarray*}   One can define for
any
$t\in T$,  $\vartheta_n(t) \in  \vartheta_n $ such that
$d(t,
\vartheta_n(t))<2^{-n} D$.   Let $s,t\in T$ such that 
$\e_{k+1}<d(s,t)\le \e_k$ for some $k\ge 0$. Writing 
$$ X(t)= X(\vartheta_k(t)) +\sum_{n=k+1}^\infty Y_n(t)\qq {\rm where}\quad Y_n(t)=X(\vartheta_n(t))-X(\vartheta_{n-1}(t)) ,$$
 we have
  $ |X(s)-X(t)| \le  |X(\vartheta_k(s))-X(\vartheta_k(t))| + 
\sum_{n=k+1 }^\infty
 |Y_n(s)-Y_n(t)| $. Indicate for later use two simple properties.
  \vskip 2 pt  {\bf (i)}  Note that $d(\vartheta_n(t), \vartheta_{n-1}(t))\le d(\vartheta_n(t),t) +d(t,\vartheta_{n-1}(t))\le 3\e_n$. Thus by assumption, 
$\|X(\vartheta_n(t))-X(\vartheta_{n-1}(t))\|_{\phi_\a}\le 3 \e_n $.
  \vskip 3 pt  {\bf (ii)} Next, 
  $d(\vartheta_k(s),\vartheta_k(t))\le d(\vartheta_k(s),t)+ d(s,t)+ d(t,\vartheta_k(t))\le 3\e_k $, so that $d(s,t)\le \e_k$ implies  
that $d(\vartheta_k(s),\vartheta_k(t)) \le 3\e_k $.
\vskip 3 pt 
Now,
  \begin{eqnarray}\label{bbound} \sup_{\e_{k+1}<d(s,t)\le \e_k}  |X(s)-X(t)|  &\le &  \sup_{d(s,t)\le \e_k}
 |X(\vartheta_k(s))-X(\vartheta_k(t))|
   + \sum_{n=k+1 }^\infty
\sup_{s,t\in T} |Y_n(s)-Y_n(t)|
\cr &\le & \sum_{n=k  }^\infty
\xi_n ,
\end{eqnarray} 
where we set 
$$\xi_n = \sup_{d(s,t)\le \e_n}
 |X(\vartheta_n(s))-X(\vartheta_n(t))|+  \sup_{s,t\in T} |Y_n(s)-Y_n(t)| .$$
Note before continuing that by assumption, the series $ \sum_{n= 1 }^\infty
\xi_n $  converges almost surely. 
For, we need the following technical lemma. 
 \begin{lemma}Let $0<\a<\infty$. Let $\xi_1,\dots, \xi_n$, $n\ge 2$,
be  random variables such that for some $\D>0$, $\E \exp\{ \big( {|\xi_i| }/{\D}\big)^\a\}\le 2$, $1\le i\le n$. Then, 
  \begin{eqnarray*}
  \E \exp\Big\{ \Big( \frac{\sup_{1\le i\le n}|\xi_i| }{\D( {2\log n}/{\log 2})^{1/\a}}\Big)^\a\Big\}  \ \le \ 2.
\end{eqnarray*}
  In particular  
\begin{equation}
\label{eq:972} 
  \big\|\sup_{i=1}^n |\xi_i|\big\|_{\phi_\a}\le \big(\sup_{i=1}^n \|\xi_i\|_{\phi_\a}\big)( {2\log n}/{\log 2})^{1/\a}.
\end{equation}
\end{lemma}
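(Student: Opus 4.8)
The plan is to normalise out the individual variables, reduce everything to a single maximum, and then extract the logarithmic dependence on $n$ by exploiting the concavity of a power map. The key preliminary observation is that the exponent in the target quantity carries the factor $c_n^{-\a}$, so set $p=c_n^\a=2\log n/\log 2$ (recall $c_n=(2\log n/\log 2)^{1/\a}$); note that $p\ge 2\ge 1$ for $n\ge 2$ and, crucially, that $p$ does not depend on $\a$. Put $W_i=\exp\{(|\xi_i|/\D)^\a\}$, so each $W_i$ is nonnegative and $\E W_i\le 2$ by hypothesis. Factoring $1/p$ out of the exponent gives the identity
$$\exp\Big\{\Big(\frac{\sup_{1\le i\le n}|\xi_i|}{\D c_n}\Big)^\a\Big\}=\Big(\sup_{1\le i\le n} W_i\Big)^{1/p}.$$

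The step I expect to be the main obstacle is not computational but conceptual: one must resist applying a union bound directly to the normalised exponentials, i.e.\ bounding $\E\exp\{(\sup_i|\xi_i|/\lambda)^\a\}$ by $\sum_i\E\exp\{(|\xi_i|/\lambda)^\a\}$. That naive route forces $\lambda$ to grow like $n^{1/\a}$ (polynomially) to keep the sum bounded, and so cannot produce the advertised logarithmic constant $c_n$. The correct move is to perform the union bound \emph{before} taking the power: since the $W_i$ are nonnegative, $\sup_i W_i\le\sum_i W_i$, so the right-hand side above is at most $(\sum_i W_i)^{1/p}$.

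I would then take expectations and apply Jensen's inequality. Because $p\ge 1$, the map $x\mapsto x^{1/p}$ is concave on $[0,\infty)$, whence
$$\E\exp\Big\{\Big(\frac{\sup_{1\le i\le n}|\xi_i|}{\D c_n}\Big)^\a\Big\}\le \E\Big(\sum_{i=1}^n W_i\Big)^{1/p}\le \Big(\E\sum_{i=1}^n W_i\Big)^{1/p}\le (2n)^{1/p}.$$
It remains only to verify $(2n)^{1/p}\le 2$, which is routine: this is equivalent to $1+\log n/\log 2\le p$, and with $p=2\log n/\log 2$ it reduces to $\log n\ge\log 2$, i.e.\ to the standing hypothesis $n\ge 2$. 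This proves the first displayed inequality. I emphasise that the argument uses only concavity of $x\mapsto x^{1/p}$ and never convexity of $\phi_\a$, so the cases $\a\ge 1$ and $0<\a<1$ are handled uniformly, as the statement $0<\a<\infty$ requires.

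Finally, inequality \eqref{eq:972} follows by homogeneity. Writing $\D_0=\sup_{i}\|\xi_i\|_{\phi_\a}$, one has $\E\phi_\a(\xi_i/\D_0)\le 1$ for each $i$ (by monotone convergence, approaching the defining infimum from above), which is exactly the hypothesis of the lemma with $\D=\D_0$. The first part then yields $\E\phi_\a(\sup_i|\xi_i|/(\D_0 c_n))\le 1$, i.e.\ $\|\sup_i|\xi_i|\|_{\phi_\a}\le \D_0 c_n$, which is precisely \eqref{eq:972}.
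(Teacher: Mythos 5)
Your proof is correct and is essentially the paper's own argument: both rest on the identity $\exp\{(\sup_i|\xi_i|/(\Delta c_n))^\a\}=\big(\sup_i e^{(|\xi_i|/\Delta)^\a}\big)^{1/p}$ with $p=2\log n/\log 2$, the pathwise bound $\sup_i \le \sum_i$ for the nonnegative exponentials, Jensen's inequality for the concave map $x\mapsto x^{1/p}$, and the elementary check $(2n)^{1/p}\le 2$ for $n\ge 2$ (you merely apply the union bound before Jensen rather than after, which is immaterial). Your closing remark on passing to $\Delta_0=\sup_i\|\xi_i\|_{\phi_\a}$ via monotone convergence is a small point the paper leaves implicit, but the route is the same.
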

\begin{proof} Let  $B= ( {2\log n}/{\log 2})^{1/\a}$.  
Then
 \begin{eqnarray*}\E \exp\Big\{ \Big( \frac{\sup_{1\le i\le n}|\xi_i| }{\D B}\Big)^\a\Big\}&=&\E \exp\Big\{ \Big( \frac{\sup_{1\le i\le n}|\xi_i| }{\D  }\Big)^\a\frac{\log 2}{2\log n}\Big\} 
\cr &\le & \Big( \E \exp\Big\{ \Big( \frac{\sup_{1\le i\le n}|\xi_i| }{\D  }\Big)^\a \Big\} \Big)^{\frac{\log 2}{2\log n}} 
\cr &\le & \Big( \sum_{1\le i\le n} \E \exp\Big\{ \Big( \frac{ |\xi_i| }{\D  }\Big)^\a \Big\} \Big)^{\frac{\log 2}{2\log n}} 
\cr &\le &  (2n  )^{\frac{\log 2}{2\log n}}\ =\  \exp\Big\{\frac{\log (2n)}{2\log n}\log 2\Big\} \ \le \ 2.
\end{eqnarray*}
As to second assertion, 
  by applying the bound previously obtained with $\D= \sup_{i=1}^n \|\xi_i\|_{\phi_\a}$, 
 we get \eqref{eq:972}. 
 \end{proof} 
  Thus by assumption \eqref{eq:673} and Lemma above,
$$\big\| \sup_{d(s,t)\le \e_n}
  |X(\vartheta_n(s))-X(\vartheta_n(t))| \big\|_{\phi_\a}\le    3\e_n  \big(\frac{4\log
 N(T,d,\e_n)}{\log 2} \big)^{1/\a} \le  3\e_n  \big(\frac{4\log
 N( \e_n) }{\log 2} \big)^{1/\a}  , $$
 and  $$ \big\|\sup_{s,t\in T} |Y_n(s)-Y_n(t)| \big\|_{\phi_\a}\le    6\e_n  \big(\frac{4}{\log 2}\log
 N(T,d,\e_n)\big)^{1/\a}.$$ 
 Hence,  $ \sum_{n= 1 }^\infty
\xi_n\in L^{\phi_\a}(\O)$ and the convergence almost sure of the series follows from Beppo-Levi's lemma.  

From (\ref{bbound}), therefore follows that
\begin{eqnarray*} 
\Big\{\exists k\ge 0 : \sup_{\e_{k+1}<d(s,t)\le \e_k}  |X(s)-X(t)|> \sum_{n=k}^\infty v_n\Big\}\subset \Big\{\exists k\ge 0 :
\sum_{n=k  }^\infty
\xi_n> \sum_{n=k}^\infty v_n\Big\}.
\end{eqnarray*} 
 Consequently, 
\begin{eqnarray*} 
\P\Big\{  \sup_{  k\ge 0}\sup_{\e_{k+1}<d(s,t)\le \e_k }  \frac{ |X(s)-X(t)|}{\sum_{n=k}^\infty
v_n} > t  \Big\}&\le &\P\Big\{\exists k\ge 0 :
\sum_{n=k  }^\infty
\xi_n> t\sum_{n=k}^\infty v_k\Big\} 
\cr &\le &\P\big\{\exists j\ge 0 :
 \xi_j> t  v_j\big\}
\cr &\le &  \sum_{j   =0 }^\infty\P \{ 
 \xi_j> t  v_j \}.\end{eqnarray*}
But 
\begin{align*}
 \P \{ 
 \xi_n> t  v_n \} ={}&  \P \Big\{ 
\sup_{d(s,t)\le \e_n}
 |X(\vartheta_n(s))-X(\vartheta_n(t))|+  \sup_{s,t\in T} |Y_n(s)-Y_n(t)|> t  v_n \Big\}
\\ \le {}&  \P \Big\{ 
\sup_{d(s,t)\le \e_n}
 |X(\vartheta_n(s))-X(\vartheta_n(t))| > (t/2)  v_n \Big\}
\\ {}&  +   \P \Big\{ 
  \sup_{s,t\in T} |Y_n(s)-Y_n(t)|> (t/2)  v_n \Big\} .
\end{align*}
  We now assume $t\ge 8^{1/\a}$ and will use the elementary bound which follows from Markov inequality,
 \begin{equation}
 \label{cb}\P\{ |U|\ge u\}\le 1/\phi_\a( {u}/{\|U\|_{\phi_\a} }) 
  \end{equation}
  for all random variables $U$ and $u\geq 0$.   Recalling (ii) and that
$v_n=12
\e_n (\log N(\e_n))^{1/\a}
$,   we get from \eqref{cb}, 
\begin{eqnarray*} 
  & &\P \big\{ 
\sup_{d(s,t)\le \e_n}
 |X(\vartheta_n(s))-X(\vartheta_n(t))| > (t/2)  v_n \big\}
\cr &\le &  \sum_{u,v\in \vartheta_n\atop d(u,v)\le 3\e_n}\P \big\{ 
 \frac{|X(u)-X(v)|}{ d(u,v)}    >  \frac{(t/2)  v_n}{ d(u,v)}   \big\}
\cr &\le &  \sum_{u,v\in \vartheta_n\atop d(u,v)\le 3\e_n}\P \big\{ 
 \frac{|X(u)-X(v)|}{ d(u,v)}    >  \frac{   12t \e_n (\log N(\e_n))^{1/\a}}{6\e_n}   \big\}
  \le   \frac{ \#\{\vartheta_n\}^2  }{\phi_\a
\big( {    t   (\log N(\e_n))^{1/\a}}  \big)}   .\end{eqnarray*} 
  Writing $1+ \phi_\a
 ( {    t   (\log N(\e_n))^{1/\a}}   )= (N(T,d,\e_n)\vee 2^n)^{t^\a-2+2}\ge N(T,d,\e_n)^2 2^{n(t^\a-2)}   $, we observe that 
$N(T,d,\e_n)^2 2^{n(t^\a-2)}-1\ge N(T,d,\e_n)^2 2^{n(t^\a-2)}/2$.
 Thus our estimate produces the bound 
\begin{eqnarray*} 
   \P \big\{ 
\sup_{d(s,t)\le \e_n}
 |X(\vartheta_n(s))-X(\vartheta_n(t))| > (t/2)  v_n \big\}
  \le     \frac{ 2N(T,d,\e_n)^2  }{ N(T,d,\e_n)^2 2^{n(t^\a-2)}    }   \le  2^{-n t^\a/2+1}
.\end{eqnarray*} 
\vskip 1 pt
 Next, let $\theta_n=\vartheta_n\times\vartheta_{n-1} $ and proceed as
follows (recalling (i)),
   \begin{eqnarray*} 
  & &   \P \big\{ 
  \sup_{s,t\in T} |Y_n(s)-Y_n(t)|> (t/2)  v_n \big\}
   \cr &\le &  \sum_{(u,v)\in \theta_n, d(u,v)\le 3\e_n\atop
(u',v')\in \theta_n, d(u',v')\le 3\e_n} \P \big\{ 
   |(X_u-X_v)-(X_{u'}-X_{v'})|> 6 t  \e_n (\log N(\e_n))^{1/\a} \big\}
 \cr &\le &  \sum_{(u,v)\in \theta_n, d(u,v)\le 3\e_n\atop
(u',v')\in \theta_n, d(u',v')\le 3\e_n}\Big\{ \P \big\{ 
   | X_u-X_v  |> 3 t  \e_n (\log N(\e_n))^{1/\a} \big\}
   \cr & &\quad + \P \big\{ 
   | X_{u'}-X_{v'} |> 3 t  \e_n (\log N(\e_n))^{1/\a} \big\}\Big\}
\cr &\le &  2(\#\vartheta_n)^2\sum_{(u,v)\in \theta_n, d(u,v)\le 3\e_n }  \P \big\{ 
   | X_u-X_v  |> 3 t  \e_n (\log N(\e_n))^{1/\a} \big\}
 .\end{eqnarray*}
We have   $\|X(u)-X(v)\|_{\phi_\a}  \le 3\e_n $ for $(u,v)\in \theta_n$.
Therefore, using again   \eqref{cb},
 \begin{eqnarray*} 
    \P \Big\{ 
  \sup_{s,t\in T} |Y_n(s)-Y_n(t)|> (t/2)  v_n \Big\}
    &\le &  \frac{ (\#\vartheta_n)^4  }{\phi_\a
\big( {    t   (\log N(\e_n))^{1/\a}}  \big)}   .\end{eqnarray*} 
 Writing  similarly $1+ \phi_\a
 ( {    t   (\log N(\e_n))^{1/\a}}   )= (N(T,d,\e_n)\vee 2^n)^{t^\a-4+4}\ge N(T,d,\e_n)^4 2^{n(t^\a-4)}   $, we observe that 
$N(T,d,\e_n)^4 2^{n(t^\a-4)}-1\ge N(T,d,\e_n)^4 2^{n(t^\a-4)}/2$. We get here
 \begin{eqnarray*} 
    \P \big\{ 
  \sup_{s,t\in T} |Y_n(s)-Y_n(t)|> (t/2)  v_n \big\}
    &\le &  \frac{ 2 N(T,d,\e_n)^4  }{N(T,d,\e_n)^4 2^{n(t^\a-4)}} \le 2^{-n t^\a/2+1}  ,
 \end{eqnarray*}
since we assumed $t\ge 8^{1/\a}$.   By combining, 
 \begin{eqnarray*} 
 \P \{ 
 \xi_n> t  v_n \}   &\le &   2^{-n t^\a/2+2} .\end{eqnarray*}
Consequently,
\begin{eqnarray*} 
\P\Big\{  \sup_{  k\ge 0}\sup_{\e_{k+1}<d(s,t)\le \e_k }  \frac{ |X(s)-X(t)|}{\sum_{n=k}^\infty
v_n} > t  \Big\}&\le &   2\sum_{n   =0 }^\infty    2^{-n t^\a/2}
 .\end{eqnarray*}
Let $\g =(\log 2)  /6$.
 It follows that 
\begin{eqnarray*} 
& &\int_{ 8^{1/\a}}^\infty e^{\gamma t^\a}\P\Big\{  \sup_{  k\ge 1}\sup_{\e_{k+1}<d(s,t)\le \e_k }  \frac{ |X(s)-X(t)|}{\sum_{n=k}^\infty
v_n} > t  \Big\}\,\dd t \cr &\le &   \sum_{n   =1 }^\infty  \int_{ 8^{1/\a}}^\infty e^{\gamma t^\a - (\log 2)  n ( \frac{t^\a}{2}-1+1)  }
\,\dd t\,\le \,  \sum_{n   =1 }^\infty 2^{-n} \int_{ 8^{1/\a}}^\infty e^{\gamma t^\a - (\log 2)    \frac{t^\a}{4}   } \,\dd t
\cr &= &   \int_{ 8^{1/\a}}^\infty e^{ -( \frac{\log 2 }{12}   )t^\a     } \,\dd
t <\infty.\end{eqnarray*}
Now, note that 
$$\sum_{n=k}^\infty v_n\le 48 \d(2^{-k}D)\le 48 \d(2d(s,t) )\le 96\d( d(s,t) )$$
since $\d$ is concave.
Let $$ \Theta:=  \sup_{ s,t\in T }  \frac{ |X(s)-X(t)|}{\d(d(s,t))} .$$
Thus\begin{eqnarray*} 
 \int_{ 8^{1/\a}}^\infty e^{\gamma t^\a}\P \{  \Theta> t  \}\,\dd t   &= &   \int_{ 8^{1/\a}}^\infty e^{ -( \frac{\log 2 }{12}   )t^\a     } \,\dd
t <\infty.\end{eqnarray*}
This shows that  $ \Theta  \in L^{\phi_\a}(\P)$ 
and moreover that 
$$\|\Theta\|_{\phi_\a}\le  C_\a .  $$

   \section{An Law of the Iterated Logarithm for Hermite processes}\label{appendix-2}
  
  Set 
  \begin{equation*}
  \nu(t)=t^H (1+|\log t|)^{m/2}\qquad t>0,\qquad \nu(0)=0, 
  \end{equation*}
 and let $C_\nu(\R_+)$ be the space of all continuous functions $y:\R_+\to\R$ satisfying  
 \begin{equation*}
 \lim_{t\to\infty } \frac{y(t)}{\nu(t)}=\lim_{t\to 0} \frac{y(t)}{\nu(t)}=0
 \end{equation*}
 equipped with the norm
 \begin{equation*}
 \|y\|_\nu=\sup_{t>0} \frac{|y(t)|}{\nu(t)}. 
 \end{equation*}
 
The following functional Law of the Iterated Logarithm  implies Corollary~\ref{cor-LIL}:

 \begin{proposition}\label{LIL-Hermite}
 For all $n\in \N$ and $t\geq 0$ let 
 \begin{equation}\label{def-X-n}
X_n(t)=\frac{X(t/n)}{n^{-H} (2\log_2(1/n))^{m/2}}. 
 \end{equation}
 With probability one, $X_n\in C_\nu(\R_+)$ for all $n\in \N$. Furthermore, the sequence $\{X_n:n\geq 1\}$ is relative compact in $C_\nu(\R_+)$ and the set of its limits points coincides with $K_Q$, where $K_Q$  is the space of all functions $y:\R_+\to\R$ of the form 
 \begin{equation*}
  y(t)= \int_{\R^m} Q_t(u_1,\dots,u_m)\xi(u_1)\cdots\xi(u_m)\,du_1\cdots du_m,\qquad t\in \R_+
 \end{equation*} 
 where $\xi\in L^2(\R)$ and $\|\xi\|_{L^2(\R)}\leq 1$.
    \end{proposition}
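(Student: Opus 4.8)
The plan is to prove a Strassen-type functional law of the iterated logarithm, following the by-now standard scheme for self-similar processes represented by multiple Wiener--It\^o integrals (developed by Mori and Oodaira), with Borell's large deviation estimate \eqref{Borell-result} as the main analytic input. Write $X(t)=I_m(Q_t)$, where $I_m$ denotes the $m$-fold multiple Wiener--It\^o integral against $B$ and $Q_t$ is the kernel of \eqref{eq:74}, and introduce the map $\Lambda\colon L^2(\R)\to C(\R_+)$ given by $\Lambda(\xi)(t)=\int_{\R^m}Q_t(u_1,\dots,u_m)\xi(u_1)\cdots\xi(u_m)\,\dd u_1\cdots\dd u_m$, so that $K_Q=\Lambda(\{\xi:\|\xi\|_{L^2}\le 1\})$. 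The statement then splits into two halves: (I) the sequence $\{X_n\}$ is relatively compact in $C_\nu(\R_+)$ and every cluster point lies in $K_Q$ (the \emph{upper bound}); and (II) every element of $K_Q$ is a cluster point of $\{X_n\}$ (the \emph{lower bound}). The normalization $n^{-H}(2\log_2(1/n))^{m/2}$ in \eqref{def-X-n} is precisely the one making the rate in Borell's estimate match $\log_2 n$, the $m$-th power reflecting the order of the chaos.

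Before the two halves I would dispose of two preliminaries. First, that $X_n\in C_\nu(\R_+)$ almost surely: the required decay $X_n(t)/\nu(t)\to0$ as $t\to0$ and $t\to\infty$ follows from ordinary one-sided iterated-logarithm bounds for $X$ at both ends of the half-line, and the weight $\nu(t)=t^H(1+|\log t|)^{m/2}$ is tailored so that these hold; the relevant tails come from \eqref{Borell-result} together with self-similarity. Second, that $K_Q$ is compact in $C_\nu(\R_+)$: the closed unit ball of $L^2(\R)$ is weakly compact and $\Lambda$ is continuous from the weak topology into $C_\nu$. The latter rests on the fact that contracting the $L^2(\R^m)$-kernel $Q_t$ against $\xi$ in one variable is a Hilbert--Schmidt, hence compact, operation, so $\xi_j\rightharpoonup\xi$ yields pointwise convergence $\Lambda(\xi_j)(t)\to\Lambda(\xi)(t)$; uniformity in the $\nu$-weighted norm then follows from an equicontinuity and tightness estimate on $\{Q_t\}$ over the unit ball.

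For the upper bound I would fix $\eps>0$ and estimate $\P(\mathrm{dist}_\nu(X_n,K_Q)>\eps)$ by viewing $X_n$ as a homogeneous chaos of order $m$ with values in the Banach space $C_\nu(\R_+)$ and applying Borell's large deviation: since the radius of $K_Q$ matches the variance parameter governing \eqref{Borell-result}, the event of being $\eps$-far from $K_Q$ has probability at most $\exp(-(1+c\eps)\log_2 n)$ for some $c>0$. Summing along a geometric subsequence $n_k=\lceil\rho^k\rceil$ and invoking Borel--Cantelli gives the conclusion along $\{n_k\}$; the gaps are filled by controlling $\sup_{n_k\le n\le n_{k+1}}\|X_n-X_{n_k}\|_\nu$ through the oscillation estimate of Theorem~\ref{t2} and the self-similarity of $X$, after which $\rho\downarrow1$ removes the discretization. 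Relative compactness is then immediate, since the $\eps$-neighbourhoods of the compact set $K_Q$ eventually contain every $X_n$.

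The lower bound is where the real work lies. Fixing $y=\Lambda(\xi)\in K_Q$ with $\|\xi\|_{L^2}=1$ (the general case follows by continuity of $\Lambda$), I would show $\P(\|X_n-y\|_\nu<\eps \text{ i.o.})=1$ by a second Borel--Cantelli argument requiring near-independence across scales: choosing a rapidly growing subsequence $n_k$ and localizing the domains of integration, one decomposes $B$ into blocks so that the dominant part of $X_{n_k}$ becomes almost independent across $k$. The quantitative input is a \emph{lower} bound $\P(\|X_{n_k}-y\|_\nu<\eps)\ge n_k^{-(1-c\eps)}$ obtained from a Cameron--Martin shift of $B$ by $\theta\xi$ with $\theta=(2\log_2 n_k)^{1/2}$: under this shift the leading contribution to $I_m$ is the deterministic term $\theta^m\langle Q_\cdot,\xi^{\otimes m}\rangle$, which is exactly $(2\log_2 n_k)^{m/2}\,y$ after normalization, while the lower-order chaoses produced by the shift must be shown negligible in $\|\cdot\|_\nu$. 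I expect the principal obstacles to be precisely these two points: isolating the top-order chaos after the shift and bounding the remaining multiple-integral remainders uniformly in the weighted sup-norm over the \emph{non-compact} half-line, and arranging genuine independence across scales despite the overlapping supports of the kernels $Q_t$---both of which call for careful truncation of the integration domains combined with the large-deviation and concentration bounds already used in the upper bound.
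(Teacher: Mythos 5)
Your overall scheme is the right one---it is exactly the Mori--Oodaira machinery that the paper itself relies on---but the paper does not reprove it. The paper's proof is a citation-plus-adaptation argument: it observes that the statement is the small-time analogue of \cite[Theorem~3.1]{Mor-Ood}, which treats $\tilde X_n(t)=X(nt)/(n^H(2\log_2 n)^{m/2})$; that one cannot pass from large to small times by time inversion of $X$ when $m\geq 2$ (it is unclear whether $\{X(t)\}\dist\{t^{2H}X(1/t)\}$ for non-Gaussian Hermite processes); and that nevertheless the proof in \cite{Mor-Ood} carries over after three local modifications, the key one being that the functional LIL for the \emph{underlying Brownian motion} (their Lemma~4.1) does transfer to small times by time inversion of $B$ itself, after which their kernel-approximation machinery (their Lemma~7.3 and the final approximation step) goes through with $X(nt)$ replaced by $X(t/n)$. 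In other words, the hard analytic content---truncation of the kernels to gain independence across scales, and control of the lower-order chaos remainders---is inherited from \cite{Mor-Ood}, not redone. Your proposal redevelops all of this from scratch, which is viable in principle, but the two points you yourself flag as ``principal obstacles'' (independence despite the unbounded supports of the $Q_t$, and bounding the shifted lower-order chaoses in $\|\cdot\|_\nu$ over the noncompact half-line) are precisely the content of those sections of \cite{Mor-Ood}; since your sketch defers rather than resolves them, the lower bound is not actually proved.

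Moreover, the one quantitative estimate you do state in the lower bound is wrong, in a way that would break the argument as written. A Cameron--Martin shift of $B$ by $\theta\xi$ with $\theta=(2\log_2 n_k)^{1/2}$ and $\|\xi\|_{L^2}\leq 1$ costs $\exp(-\theta^2\|\xi\|_{L^2}^2/2)=\exp(-\|\xi\|_{L^2}^2\log_2 n_k)$, so the bound it yields is $\P\big(\|X_{n_k}-y\|_\nu<\eps\big)\geq (\log n_k)^{-(1-c\eps)}$ up to constants, not $n_k^{-(1-c\eps)}$. This is not cosmetic: with your stated bound and a ``rapidly growing'' subsequence $n_k$, the series $\sum_k \P(\|X_{n_k}-y\|_\nu<\eps)$ \emph{converges}, and the second Borel--Cantelli lemma gives nothing. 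The argument closes only in the classical way: take $n_k$ growing geometrically, so that $\log_2 n_k\sim \log k$ and the corrected bound gives $\sum_k k^{-(1-c\eps)}=\infty$, while geometric separation of the scales $1/n_k$ still suffices (after kernel truncation) for asymptotic independence. Note that your upper-bound estimate $\exp(-(1+c\eps)\log_2 n)$ has the correct form, so the lower-bound formula is also internally inconsistent with the rest of your sketch.
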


  Theorem~3.1 of Mori and Oodaira~\cite{Mor-Ood} shows Proposition~\ref{LIL-Hermite} where the   processes $X_n$ in  \eqref{def-X-n} are replaced by processes $\tilde X_n$ of the form 
  \begin{equation*}
 \tilde X_n=\frac{X(nt)}{n^H(2\log_2 n)^{m/2}}.
  \end{equation*} 
  In the Gaussian case $m=1$, Proposition~\ref{LIL-Hermite} follows  easily  from Mori and Oodaira's result using  time inversion $X$, that is, 
  $\{X(t):t\geq 0\}\dist \{t^{2H}X(1/t):t\geq 0\}$. However, in the  non-Gaussian case $m\geq 2$ it is not clear to us about such time inversion  holds. 
    To proof  Proposition~\ref{LIL-Hermite} we will use the same approximation 
  of $X$ as is done in the proof of  \cite[Theorem~3.1]{Mor-Ood}. In fact, with the obvious modification, the proof of \cite[Theorem~3.1]{Mor-Ood} will also work for the setting considered in Proposition~\ref{LIL-Hermite}.
 In the following (i)--(iii) we will list which modifications which needs to be done.   (i):  \cite[Lemma~4.1]{Mor-Ood}  holds  with $B_n$ replaced by 
$B_n(t)=B(t/n)/(2 n \log_2n)^{m/2}$.
 This   follows by the time inversion of the Brownian motion and \cite[Lemma~4.1]{Mor-Ood}. (ii):  \cite[Lemma~7.3]{Mor-Ood}  holds with $Z_n^\epsilon(t)$ replaced by $Z_n^\epsilon(t)=X(t/n)/(n^{-H} (2\log_2(1/n))^{m/2})$. This follows by the same arguments as in the proof of \cite[Lemma~7.3]{Mor-Ood}. (iii): In the proof of Theorem~3.1  on the pages 389--390 in \cite{Mor-Ood}  we  replace $X_n^p(t)$ on  the mid of page~389 with $X_n^p(t)=X^p(t/n)/(n^{-H}(2 \log_2 1/n))^{m/2})$, which will prove Proposition~\ref{LIL-Hermite}.

  
  \bibliographystyle{chicago}

\end{document}